\newcommand{\usepackageifexists}[1]{%
    \IfFileExists{#1.sty}{\usepackage{#1}}%
       {\GenericInfo{taglia}{Il package #1 non esiste.}}}
\theoremstyle{plain}
\newtheorem{Thm}{Theorem}
\newtheorem*{Thm*}{Theorem}
\newtheorem{citThm}{Theorem}
\newtheorem{Prop}{Proposition}
\newtheorem{Lem}[Prop]{Lemma}
\theoremstyle{definition}
\newtheorem{Def}{Definition}
\newtheorem{Hyp}{Hypothesis}
\newtheorem{Not}{Notation}
\theoremstyle{remark}
\newtheorem{Rem}{Remark}
\newtheorem{Example}{Example}
\newcommand{\N}{\mathbb{N}}
\newcommand{\R}{\mathbb{R}}
\newcommand{\Rn}{{\R^n}}
\newcommand{\ee}{{\varepsilon}}
\newcommand{\bs}{\eta} 
\newcommand{\homog}{\lambda} 
\newcommand{\dat}{g} 
\newcommand{\xii}{{\abs{\xi}}}
\DeclareMathOperator{\diag}{diag}
\DeclareMathOperator{\supp}{supp}
\renewcommand{\doteq}{:=}
\newcommand{\pd}{{\mathrm{pd}}}
\newcommand{\hyp}{{\mathrm{hyp}}}
\newcommand{\elli}{{\mathrm{ell}}}
\newcommand{\red}{{\mathrm{red}}}
\newcommand{\comp}{{\mathrm{comp}}}
\newcommand{\expo}{{\kappa}}
\newcommand{\Fuj}{{\mathrm{Fuj}}}
\newcommand{\GN}{{\mathrm{GN}}}
\newcommand{\lin}{{\mathrm{lin}}}
\newcommand{\nl}{{\mathrm{nl}}}
\def\<#1\>{\left\langle#1\right\rangle }
\newcommand{\abs}[1]{\left\vert#1\right\vert}
\begin{document}

\title[Semi-linear Wave Equations]
          {Semi-linear Wave Equations\\
          with Effective damping}
\author[M. D'Abbicco, S. Lucente, M. Reissig]%
    {Marcello D'Abbicco, Sandra Lucente, Michael Reissig}

\address{Marcello D'Abbicco and Sandra Lucente, Department of Mathematics, University of Bari, Via E. Orabona 4 - 70125 BARI -
ITALY}

\address{Michael Reissig, Faculty for Mathematics and Computer
Science, Technical University Bergakademie Freiberg, Pr\"uferstr.9 -
09596 FREIBERG - GERMANY}

\begin{abstract}
We study the Cauchy problem for the semi-linear damped wave equation
\[ u_{tt}-\triangle u+b(t)u_t=f(u), \qquad u(0,x)=u_0(x), \quad u_t(0,x)=u_1(x), \]
in any space dimension~$n\geq1$. We assume that the time-dependent
damping term~$b(t)>0$ is \emph{effective}, in particular
$tb(t)\to\infty$ as $t\to \infty$. We prove the global existence of
small energy data solutions for~$|f(u)|\approx |u|^p$ in the
supercritical case $p>1+2/n$ and $p\leq n/(n-2)$ for $n\ge 3$.\\
\end{abstract}

\keywords{semi-linear equations, damped wave equations, critical
exponent, global existence}

\subjclass[2010]{35L71 Semi-linear second-order hyperbolic
equations}

\maketitle


We consider the Cauchy problem for the dissipative semi-linear
equation
\begin{equation}
\label{eq:diss}
\begin{cases}
u_{tt}-\triangle u+b(t)u_t=f(u), & t\geq0, \ x\in\R^n,\\
u(0,x)=u_0(x), \\
u_t(0,x)=u_1(x),
\end{cases}
\end{equation}
where the time-dependent damping term~$b(t)>0$ is \emph{effective}, in particular $tb(t)\to\infty$ as $t\to \infty$, and the nonlinear term satisfies
\begin{equation}\label{eq:disscontr}
f(0)=0, \qquad |f(u)-f(v)|\lesssim |u-v|(|u|+|v|)^{p-1},
\end{equation}
for a given $p>1$. Our aim is to establish the existence
of~$\mathcal{C}([0,\infty),H^1)\cap\mathcal{C}^1([0,\infty),L^2)$
solutions of~\eqref{eq:diss} assuming small initial data in the
energy space~$H^1\times L^2$ or in some weighted energy spaces.
Clearly this will require suitable assumptions on~$b(t)$ and on the
exponent~$p$ in~\eqref{eq:disscontr}. In Section~\ref{sec:classic}
we first present some results related to the the semi-linear wave
equation with a constant damping term. We refer the interested
reader to~\cite{ITY, Nx} and to the quoted references for the damped
wave equation with $x$-dependent damping term~$b(x) u_t$. In Section
\ref{sec:Main} we state our main theorems and some auxiliary
results.

\section{The classical semi-linear damped wave equation}\label{sec:classic}

Many papers concern with the \emph{classical} semi-linear damped
wave equation, i.e. with the case~$b\equiv 1$:
\begin{equation}\label{eq:classic}
\begin{cases}
u_{tt}-\triangle u+u_t=f(u),\\
u(0,x)=u_0(x), \\
u_t(0,x)=u_1(x).
\end{cases}
\end{equation}
For the sake of clarity we put
\begin{align*}
p_\GN(n)
    & =1+\frac2{n-2}=\frac{n}{n-2}, & \text{for~$n\geq3$,}\\
p_\Fuj(n)
    & =1+\frac{2}{n}, & \text{for~$n\geq1$.}
\end{align*}
As stated in \cite{NO}, for initial data~$(u_0,u_1)\in H^1\times
L^2$ with compact support in~$B_K(0)$, and $p\le p_\GN(n)$
if~$n\geq3$, the problem~\eqref{eq:classic} admits a unique local
solution
$u\in\mathcal{C}([0,T_m),H^1)\cap\mathcal{C}^1([0,T_m),L^2)$ for
some maximal existence time $T_m\in(0,+\infty]$  and for any
~$t<T_m$ it holds~$\supp u(t,\cdot)\subset B_{K+t}(0)$.
\\
One of the first results on global existence theory has been given
in~\cite{NO} establishing global existence for small data by using
the technique of \emph{potential well} and \emph{modified potential
well}. Let~$\widetilde{W}\subset H^1$ be the interior
of the set
\[ \left\{ u\in H^1 : \ \|\nabla u\|_{L^2}^2 \geq \|u\|_{L^{p+1}}^{p+1} \right\}. \]
In particular, by assuming $(u_0,u_1)\in \widetilde{W}\times L^2$ the authors
remove the compactness assumption on the support of the data and
they prove the local existence of the solution, provided that $p<
(n+2)/(n-2)$ if~$n\geq3$ (Theorem 1 in~\cite{NO}). In Theorem~$3$ of
the same paper, they prove the global existence, provided that the
data in~$\widetilde{W}\times L^2$ satisfies energy smallness
assumptions and the exponent satisfies~$p\geq 1+4/n$ with
$p<(n+2)/(n-2)$ if~$n\geq3$ (we remark that this set is not empty).
In such a case, the energy of the solution to~\eqref{eq:diss}
satisfies the same decay estimates of the linear equation, i.e.
$\|u_t(t,\cdot)\|_{L^2}^2+\|\nabla u(t,\cdot)\|_{L^2}^2\leq
C(1+t)^{-1}$.
\\
Assuming compactly supported data~$(u_0,u_1)\in H^1\times L^2$
pointwise sufficiently small, a global existence result for
$p>p_\Fuj(n)$, and $p\leq p_\GN(n)$ if~$n\geq3$, has been proved
in~\cite{TY} (we remark that this set is never empty). The approach
followed in~\cite{TY} makes use of the Matsumura
estimates~\cite{Matsu} for the solution to the Cauchy problem for
the classical damped linear wave equation
\begin{equation}\label{eq:lindamped}
u_{tt}-\triangle u+u_t=0, \qquad u(0,x)=u_0(x), \quad
u_t(0,x)=u_1(x).
\end{equation}
In order to state these estimates we define
\begin{gather}
\label{eq:Amk}
\mathcal{A}_{m,k} \doteq (L^m\cap H^k) \times (L^m\cap H^{k-1}),\\
\label{eq:normAmk} \|(u,v)\|_{\mathcal{A}_{m,k}} \doteq
\|u\|_{L^m}+\|u\|_{H^k}+\|v\|_{L^m}+\|v\|_{H^{k-1}}
\end{gather}
for~$m\in [1,2)$ and~$k\in\N$. If~$(u_0,u_1)\in \mathcal{A}_{m,1}$ for some~$m\in [1,2)$, then the solution to~\eqref{eq:lindamped} satisfies
\begin{equation}
\label{eq:matsu}
\begin{array}{rl}
\|u(t,\cdot)\|_{L^2} &\leq C (1+t)^{-\frac{n}2(\frac1m-\frac12)} \|(u_0,u_1)\|_{\mathcal{A}_{m,0}},\\
\| \nabla u(t,\cdot) \|_{L^2} &\leq C (1+t)^{-\frac{n}2(\frac1m-\frac12)-\frac12} \|(u_0,u_1)\|_{\mathcal{A}_{m,1}}, \\
\|u_t(t,\cdot)\|_{L^2} &\leq C (1+t)^{-\frac{n}2(\frac1m-\frac12)-1} \|(u_0,u_1)\|_{\mathcal{A}_{m,1}}.
\end{array}
\end{equation}
Since in~\cite{TY} the data $(u_0,u_1)\in H^1\times L^2$ has compact
support, the authors apply Matsumura's estimates for~$m=1$.
Moreover, they find that the energy of the solution
to~\eqref{eq:classic} satisfies~\eqref{eq:matsu} for~$m=1$ and they prove a blow-up
result in finite time if~$p<p_\Fuj(n)$, provided that~$f(u)=|u|^p$ and that
$\int_{\Rn} u_j(x)\,dx>0$ for $j=0,1$. The same result is obtained
in \cite{Z} for the case $p=p_\Fuj(n)$.
\\
We remark that the exponent $p_\Fuj(n)$ is the Fujita's one, the
same which guarantees the existence of a non-negative classical
global solution to the semi-linear heat equation
\[ u_t-\triangle u=u^p, \qquad u(0,x)=u_0(x), \]
provided that~$u_0\ge 0$ is sufficiently smooth. The Fujita exponent is sharp, that is, if~$p\leq p_\Fuj(n)$, the semi-linear heat equation does not admit any global regular solution (see~\cite{Fuj}).

Coming back to the global existence theory for the semi-linear
classical damped wave equation, the condition on the compact support
of the data has been relaxed in~\cite{IT} by assuming small data in
a suitable weighted Sobolev space:
\begin{equation}
\label{eq:ikeE0} I^2 \doteq \int_{\Rn}  e^{|x|^2/2} \left( |u_1|^2 +
|\nabla u_0|^2 + |u_0|^2 \right) dx \leq \epsilon^2.
\end{equation}
Condition~\eqref{eq:ikeE0} implies that $(u_0,u_1)\in (W^{1,1}\cap H^1)\times(L^1\cap L^2)\subset \mathcal{A}_{1,1}$, therefore in~\cite{IT} the authors can use Matsumura's estimates~\eqref{eq:matsu} for~$m=1$.
\\
Furthermore, in~\cite{IMN} the authors show that the smallness in
weighted Sobolev spaces or compactly supported data can be avoided
assuming smallness in~$\mathcal{A}_{1,1}$ and the critical exponent
remains $p_\Fuj(n)$ for $n=1,2$. Since their technique
requires~$p>2$, the authors obtain global existence \emph{only} for
$2<p\leq 3=p_\GN(3)$ if~$n= 3$ (we remark that~$p_\Fuj(3)=1+2/3<2$).
In~\cite{IO} this result is extended to initial data in $\mathcal
A_{m,1}$ for~$m\in(1,2)$.

In this paper, we are going to follow the approach in~\cite{IMN, IO,
IT}. In particular we are going to use some Matsumura-type estimates
for the linear wave equation with time-dependent \emph{effective}
damping, derived by J. Wirth~\cite{W07}. In order to do this, we are
going to extend these estimates to a family of Cauchy problems with
initial time as parameter.
\\
We remark that the Cauchy problem for the classical wave equation
(i.e.~$b\equiv1$) is independent of translation in time, since the
coefficients of the equation do not depend on~$t$ and hence
Duhamel's principle easily applies, whereas for a non-constant
~$b=b(t)$ the situation is more complicated.

%

\section{Main Results}\label{sec:Main}

In order to present our results we fix the class of \emph{effective}
damping terms~$b(t)$ which are of interest in the further
discussions.

\begin{Hyp}\label{Hyp:b}
We make the following assumptions on the damping term~$b(t)$:
\begin{enumerate}[(i)]
\item \label{en:bpos} $b(t)>0$ for any~$t\geq0$,
\item \label{en:beff} $b(t)$ is monotone, and $tb(t)\to\infty$ as~$t\to\infty$,
\item \label{en:bsnotdiss} $((1+t)^2b(t))^{-1}\in L^1([0,\infty))$,
\item \label{en:breg} $b\in\mathcal{C}^3$ and
\begin{equation}\label{eq:oscb}
\frac{\abs{b^{(k)}(t)}}{b(t)}\lesssim \frac1{(1+t)^k}\,,
\end{equation}
for any~$k=1,2,3$,
\item \label{en:1b} $1/b\not\in L^1$.
\end{enumerate}
\end{Hyp}
The damping term~$b(t)$ is \emph{effective} according to~\cite{W05, W07}.
\begin{Def}\label{Def:Bt0}
We denote by~$B(t,0)$ the primitive of~$1/b(t)$ which vanishes at~$t=0$, that is,
\begin{equation}\label{eq:B}
B(t,0) = \int_0^t \frac1{b(\tau)}\, d\tau.
\end{equation}
\end{Def}
Thanks to conditions~\eqref{en:bpos} and~\eqref{en:1b} in Hypothesis~\ref{Hyp:b}, $B(t,0)$ is a positive, strictly increasing function, and~$B(t,0)\to+\infty$ as~$t\to\infty$.
\\
Let us consider the Cauchy problem for the linear damped wave equation:
\begin{equation}\label{eq:CPlin}
\begin{cases}
u_{tt}-\triangle u +b(t)u_t=0,\\
u(0,x)=u_0(x),\\
u_t(0,x)=u_1(x).
\end{cases}
\end{equation}
In 2005, J. Wirth derived Matsumura-type estimates for the solution to~\eqref{eq:CPlin} (see Theorem 5.5 in~\cite{W05} and Theorem 26 in~\cite{W07}).
\begin{citThm}
If Hypothesis~\ref{Hyp:b} is satisfied and~$(u_0,u_1)\in\mathcal{A}_{m,1}$ for some~$m\in [1,2]$, then the solution to the Cauchy problem~\eqref{eq:CPlin}
satisfies the following decay estimates$:$
\begin{align}
\label{eq:MWu}
\|u(t,\cdot)\|_{L^2}
    & \leq C (1+B(t,0))^{-\frac{n}2(\frac1m-\frac12)} \|(u_0,u_1)\|_{\mathcal{A}_{m,0}}, \\
\label{eq:MWux}
\|\nabla u(t,\cdot)\|_{L^2}
    & \leq C (1+B(t,0))^{-\frac{n}2(\frac1m-\frac12)-\frac12} \|(u_0,u_1)\|_{\mathcal{A}_{m,1}}, \\
\label{eq:MWut}
\|u_t(t,\cdot)\|_{L^2}
    & \leq C  (b(t))^{-1}(1+B(t,0))^{-\frac{n}2(\frac1m-\frac12)-1}  \|(u_0,u_1)\|_{\mathcal{A}_{m,1}}.
\end{align}
\end{citThm}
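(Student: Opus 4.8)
The plan is to reduce the Cauchy problem \eqref{eq:CPlin} to a scalar pseudodifferential setting via the partial Fourier transform $v(t,\xi) = \widehat{u}(t,\xi)$, which satisfies the ODE family
\begin{equation*}
v_{tt} + \xii^2 v + b(t) v_t = 0, \qquad v(0,\xi) = \widehat{u_0}(\xi), \quad v_t(0,\xi) = \widehat{u_1}(\xi),
\end{equation*}
parametrized by $\xi \in \R^n$. The key device is the change of variables governed by $B(t,0)$ together with the reduction of the dissipative ODE to a "dissipative transformation" $v = e^{-\frac12\int_0^t b(\tau)\,d\tau}\, w$; this removes the first-order term at the price of a zero-order perturbation $\xii^2 - \tfrac14 b(t)^2 - \tfrac12 b'(t)$, whose sign depends on whether we are in the \emph{hyperbolic zone} $\xii \gtrsim b(t)$ or the \emph{pseudodifferential (dissipative) zone} $\xii \lesssim b(t)$. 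The separating curve is $t_\xi$, defined implicitly by $\xii\, b(t_\xi) \approx 1$ (or an analogous normalization); assumptions \eqref{en:bpos}--\eqref{en:breg} in Hypothesis~\ref{Hyp:b} guarantee that this zone decomposition is well-behaved and that the symbol estimates \eqref{eq:oscb} on $b^{(k)}/b$ up to order three can be invoked.

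First I would treat the \textbf{small frequencies}, $\xii\, b(t) \lesssim 1$, i.e.\ the dissipative zone, which is where the decay rate $(1+B(t,0))^{-n/2(1/m-1/2)}$ originates. Here one diagonalizes the first-order system for $(v, v_t)$ and extracts a dominant "heat-like" eigenvalue $\lambda_1(t,\xi) \approx -\xii^2/b(t)$ plus a rapidly decaying eigenvalue $\lambda_2(t,\xi) \approx -b(t)$; integrating $\lambda_1$ in $t$ produces exactly $-\xii^2 B(t,0)$ in the exponent, so that $|v(t,\xi)| \lesssim e^{-c\xii^2 B(t,0)}\,|(\widehat{u_0},\widehat{u_1})(\xi)|$. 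Next, for \textbf{large frequencies}, $\xii\, b(t) \gtrsim 1$, one works in the hyperbolic zone with a WKB/diagonalization scheme: the dissipative transformation yields an energy with an exponential factor $e^{-\frac12\int_0^{t} b(\tau)\,d\tau}$, and since $1/b \notin L^1$ (assumption \eqref{en:1b}) this exponential beats any polynomial; one still needs the symbol class conditions \eqref{eq:oscb} to control the remainder terms in the asymptotic construction and to show they are integrable. Matching the two zones along $t = t_\xi$, and carefully tracking the exact powers of $\xii$ that are "spent" — none for $\|u\|_{L^2}$, one for $\|\nabla u\|_{L^2}$, and for $\|u_t\|_{L^2}$ an extra $\xii^2/b(t) \approx$ the time-derivative of the phase, which accounts for the factor $b(t)^{-1}$ in \eqref{eq:MWut} — gives pointwise bounds on $\widehat{u}$, $\xi\widehat{u}$, $\widehat{u_t}$.

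Finally I would convert the frequency-localized bounds into the stated $L^2$ estimates by splitting $\|u(t,\cdot)\|_{L^2}^2 = \int_{\xii \le 1} + \int_{\xii \ge 1}$ (Plancherel). On the low-frequency part one uses Hölder with exponents $m/(m-1)$ and its conjugate together with $\|\widehat{u_0}\|_{L^{m'}} \lesssim \|u_0\|_{L^m}$ (Hausdorff--Young), and the elementary bound $\int_{\R^n} e^{-c\xii^2 B(t,0)}\,|\xi|^{2j}\,d\xi \approx (1+B(t,0))^{-n/2 - j}$ after the scaling $\xi \mapsto \xi/\sqrt{B(t,0)}$; this is what produces the rate $-\tfrac{n}{2}(\tfrac1m-\tfrac12)-j$ with $j=0,1$, and an extra $-1$ for $v_t$. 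On the high-frequency part the exponential decay makes everything trivially controlled by $\|(u_0,u_1)\|_{\mathcal{A}_{m,1}}$, in fact by the $H^k$ part of the norm. The main obstacle, and the technical heart of the argument, is the construction and error control of the diagonalizer in the hyperbolic zone: one must verify that the generalized eigenprojections are uniformly bounded and that the off-diagonal remainder lies in an integrable symbol class — this is precisely where the three orders of regularity on $b$ and the estimates \eqref{eq:oscb} enter, and where the condition \eqref{en:bsnotdiss}, namely $((1+t)^2 b(t))^{-1} \in L^1$, is used to keep the solution from behaving like that of a \emph{non-effective} (scattering-type or overdamped) model. Everything else — the zone decomposition, the scaling in the final integral, and the Hausdorff--Young step — is routine once this symbol-class bookkeeping is in place.
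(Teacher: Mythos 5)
Your proposal follows essentially the same route as the paper (and as Wirth's original argument, which the paper reproduces in generalized, parameter-dependent form in its proof of Theorem~\ref{Thm:linmain}): partial Fourier transform, the dissipative change of variables $v=\lambda(t)^{-1}w$ producing the perturbed symbol $\xii^2-\tfrac14b^2-\tfrac12b'$, a zone decomposition with diagonalization in the hyperbolic zone and heat-like behaviour $e^{-c\xii^2B(t,0)}$ in the elliptic/dissipative zone, and finally H\"older--Hausdorff--Young plus the Gaussian scaling $\xi\mapsto\xi/\sqrt{B(t,0)}$ to produce the rates $-\tfrac n2(\tfrac1m-\tfrac12)-\tfrac j2$ and the extra $(b(t))^{-1}(1+B(t,0))^{-1}$ for $u_t$. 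The only slips are cosmetic: the separating curve in the effective case is $\xii\approx b(t_\xi)$ (not $\xii\,b(t_\xi)\approx1$), consistent with the zones you yourself wrote down, and the final frequency splitting should be taken at the time-dependent threshold $\xii\approx b(t)$ rather than at $\xii=1$, though the exponential bounds make the latter harmless.
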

In order to prove our results for semi-linear damped wave equations we need a further assumption on~$b(t)$ in the case of increasing $b(t)$.
\begin{Hyp}\label{Hyp:furtherb}
Let~$b\in\mathcal{C}^1([0,\infty))$, $b(t)>0$. We assume that there exists a constant ~$m\in [0,1)$ such that
\begin{equation}\label{eq:tbprimeb}
tb'(t)\leq mb(t), \qquad t\geq 0.
\end{equation}
\end{Hyp}
\begin{Rem}
We recall that if~$b(t)$ is as in Hypothesis~\ref{Hyp:b}, then it is either increasing or decreasing. If~$b(t)$ is decreasing, then~\eqref{eq:tbprimeb}
holds for~$m=0$. On the other hand, if~$b(t)$ is increasing, condition~\eqref{eq:tbprimeb} is stronger than the upper bound of~\eqref{eq:oscb} for~$k=1$.
\end{Rem}
Our first result is based on a generalization of the ideas in~\cite{IT}.
\begin{Not}\label{Not:weight}
Given $\rho:\R^n\to [0,\infty)$, we say that $f\in L^q(\rho)$ for some~$q\in [1,\infty]$ if $\rho f\in L^q$. Similarly, for any $f\in L^2(\rho)$ such that
$\nabla f\in L^2(\rho)$ we write $f\in H^1(\rho)$.
\\
It is easy to see that $H^1(\rho)\hookrightarrow H^1$ if $\rho>0$ and $1/\rho\in L^\infty$.
\\
Since in this paper we will work with exponential weight functions, for the sake of brevity we will denote~$L^q(e^g)$ as~$L^q_g$ and~$H^1(e^g)$ as~$H^1_g$
for any~$g:\R^n\to\R$.
\end{Not}
We assume that the initial data of~\eqref{eq:diss} is small in~$H^1_{\alpha|x|^2}\times L^2_{\alpha|x|^2}$ for some~$\alpha\in(0,1/4]$. We put
\begin{equation}\label{eq:I0alpha}
I_\alpha^2 \doteq \int_{\R^n} e^{2\alpha |x|^2}\left(|u_0(x)|^2+|\nabla u_0(x)|^2+|u_1(x)|^2\right) dx.
\end{equation}
\begin{Thm}\label{Thm:main}
Let $n\ge 1$ and $p>p_\Fuj(n)$. Moreover, let~$p\leq p_\GN(n)$ if~$n\geq3$. Let~$\alpha\in(0,1/4]$. Then there exists~$\epsilon_0>0$ such that,
if~$I_\alpha\leq\epsilon_0$, where~$I_\alpha$ is introduced in~\eqref{eq:I0alpha}, then there exists a unique solution to~\eqref{eq:diss}
in~$\mathcal{C}([0,\infty),H^1)\cap\mathcal{C}^1([0,\infty),L^2)$.
\\
Moreover, there exists a constant~$C>0$ such that the solution satisfies the decay estimates
\begin{align}
\label{eq:decayu}
\|u(t,\cdot)\|_{L^2}
    & \leq C\,I_\alpha \,(1+B(t,0))^{-\frac{n}4}, \\
\label{eq:decayux}
\|\nabla u(t,\cdot)\|_{L^2}
    & \leq C\,I_\alpha \,(1+B(t,0))^{-\frac{n}4-\frac12}, \\
\label{eq:decayut}
\|u_t(t,\cdot)\|_{L^2}
    & \leq C\,I_\alpha \,(1+B(t,0))^{-\frac{n}4}(1+t)^{-1}.
\end{align}
Finally, the wave energy is uniformly bounded in the family of
weighted spaces $L^2_{\psi(t,\cdot)}$, where
\begin{equation}\label{eq:psi}
\psi(t,x)=\frac{\alpha|x|^2}{(1+B(t,0))},
\end{equation}
namely,
\[
\int_{\Rn} e^{\frac{2\alpha |x|^2}{(1+B(t,0))}} \left( |\nabla u(t,x)|^2+|u_t(t,x)|^2 \right) dx \leq CI_\alpha^2, \qquad t\geq0.
\]
\end{Thm}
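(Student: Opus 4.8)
The plan is to set up a contraction mapping argument in a function space that encodes the decay rates \eqref{eq:decayu}--\eqref{eq:decayut} together with the weighted energy bound, and to use an extension of Wirth's estimates (Theorem~A) to the family of Cauchy problems with shifted initial time so that Duhamel's formula can be applied. The first step is to introduce the solution space $X(T)$ of functions $u\in\mathcal{C}([0,T],H^1)\cap\mathcal{C}^1([0,T],L^2)$ with finite norm
\[
\|u\|_{X(T)} = \sup_{0\le t\le T}\Big( (1+B(t,0))^{\frac n4}\|u(t,\cdot)\|_{L^2} + (1+B(t,0))^{\frac n4+\frac12}\|\nabla u(t,\cdot)\|_{L^2} + (1+B(t,0))^{\frac n4}(1+t)\|u_t(t,\cdot)\|_{L^2}\Big),
\]
and to define the map $\Phi u = u^{\lin} + u^{\nl}$, where $u^{\lin}$ solves \eqref{eq:CPlin} with the given data and $u^{\nl}(t,x)=\int_0^t E(t,s)f(u(s,\cdot))\,ds$, with $E(t,s)$ the propagator of $w_{tt}-\triangle w+b(t)w_t=0$ with data at time $s$. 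I would need the Matsumura-type bound for $E(t,s)$ with $m=1$: this is the "extension of Wirth's estimates to initial time as parameter" promised in the introduction, and it should give $\|E(t,s)g\|_{L^2}\lesssim (1+B(t,s))^{-\frac n4}\|g\|_{L^1\cap L^2}$, and similarly with $\nabla$ and $\partial_t$ (with the extra factor $b(t)^{-1}$ on the time derivative), where $B(t,s)=\int_s^t b(\tau)^{-1}\,d\tau$.

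Next I would estimate the nonlinear integral. By \eqref{eq:disscontr} with $f(0)=0$ we have $|f(u)|\lesssim|u|^p$, so by Gagliardo--Nirenberg $\|f(u(s,\cdot))\|_{L^1}\lesssim\|u(s,\cdot)\|_{L^p}^p$ and $\|f(u(s,\cdot))\|_{L^2}\lesssim\|u(s,\cdot)\|_{L^{2p}}^p$, and these $L^q$ norms interpolate between $\|u\|_{L^2}$ and $\|\nabla u\|_{L^2}$ — here the restriction $p\le p_\GN(n)=n/(n-2)$ for $n\ge3$ is exactly what guarantees $L^{2p}\subset H^1$. Feeding the $X(T)$-norm of $u$ into these, one gets $\|f(u(s,\cdot))\|_{L^1\cap L^2}\lesssim \|u\|_{X(T)}^p (1+B(s,0))^{-\frac n2(p-1)}\cdot(\text{mild extra }B\text{-power})$. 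Then one splits $\int_0^t=\int_0^{t/2}+\int_{t/2}^t$: on $[0,t/2]$ one uses $B(t,s)\approx B(t,0)$ and the decay in $s$, and on $[t/2,t]$ one uses that the integrand's $s$-decay is already $(1+B(t,0))^{-\frac n2(p-1)}$ while $\int_{t/2}^t b(\tau)^{-1}\,d\tau = B(t,t/2)\lesssim B(t,0)$; the key integrability input is that $p>p_\Fuj(n)$ makes the exponent $\frac n2(p-1)>1$, so the relevant $s$-integral converges and one recovers the claimed decay rates with a constant independent of $T$. The same scheme applied to the differences $f(u)-f(v)$, again using \eqref{eq:disscontr}, shows $\Phi$ is a contraction on a ball of $X(T)$ when $I_\alpha$ is small, uniformly in $T$, hence gives the global solution and the estimates \eqref{eq:decayu}--\eqref{eq:decayut}.

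The weighted energy bound is a separate, and I expect the more delicate, step. Following~\cite{IT}, I would derive a differential inequality for $\int_{\Rn}e^{2\psi(t,x)}(|\nabla u|^2+|u_t|^2)\,dx$ by multiplying the equation by $e^{2\psi}u_t$ and integrating by parts; the time derivative of the weight produces a term involving $\partial_t\psi = -\alpha|x|^2 B'(t,0)/(1+B(t,0))^2 = -b(t)^{-1}\alpha|x|^2/(1+B(t,0))^2$, and the spatial gradient of the weight produces $\nabla\psi = 2\alpha x/(1+B(t,0))$; the crucial algebraic fact is that, because of the damping $b(t)u_t$ and the constraint $\alpha\le1/4$, the "bad" terms $e^{2\psi}|\nabla\psi|^2|u|^2$-type and cross terms can be absorbed into $b(t)e^{2\psi}|u_t|^2$ and into the good decaying part, a computation whose constant-damping prototype is exactly Ikehata--Tanizawa's. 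One then controls the remaining forcing term $\int e^{2\psi}|u_t||f(u)|$ using the already-established $L^\infty$-in-time decay of $u$ together with the pointwise smallness coming from $I_\alpha$ (and an $L^\infty$ bound from Sobolev embedding, valid since $p\le p_\GN(n)$), and closes the inequality by Gronwall. The main obstacle is precisely this energy computation: verifying that Hypotheses~\ref{Hyp:b} and~\ref{Hyp:furtherb} — in particular \eqref{eq:tbprimeb} and the non-integrability/oscillation conditions — provide exactly the sign and smallness needed for the weight multiplier to behave, since unlike the constant-damping case the coefficient $b(t)$ and the inner time $B(t,0)$ now interact nontrivially in $\partial_t\psi$.
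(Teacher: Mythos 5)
Your first part --- the contraction in $X(T)$ with the unweighted Gagliardo--Nirenberg interpolation of $\|u\|_{L^p}$, $\|u\|_{L^{2p}}$ between $\|u\|_{L^2}$ and $\|\nabla u\|_{L^2}$ --- is essentially the paper's proof of Theorem~\ref{Thm:low}, not of Theorem~\ref{Thm:main}, and it cannot cover the stated range of $p$. Interpolating $\|u(s,\cdot)\|_{L^p}$ between $L^2$ and $\dot H^1$ requires $\theta(p)=\frac n2\,\frac{p-2}{p}\ge 0$, i.e.\ $p\ge 2$; but Theorem~\ref{Thm:main} allows, for instance, $n=3$ and $p\in(5/3,2)$. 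This is exactly why the paper states two separate theorems: Theorem~\ref{Thm:low} carries the restriction $p\ge2$ (hence $n\le4$), while Theorem~\ref{Thm:main} removes it by exploiting the weighted data. The mechanism is that the weighted energy is not a final ``extra'' estimate but the engine of the whole argument: one bounds $\|f(u(s,\cdot))\|_{L^1}\lesssim\|u(s,\cdot)\|_{L^p}^p\lesssim (1+B(s,0))^{n/4}\|e^{\varepsilon\psi(s,\cdot)}u(s,\cdot)\|_{L^{2p}}^p$ by H\"older against the Gaussian $e^{-\varepsilon p\psi}$ (whose mass grows like $(1+B(s,0))^{n/2}$), and then controls $\|e^{\varepsilon\psi(s,\cdot)}u(s,\cdot)\|_{L^{2p}}$ by a \emph{weighted} Gagliardo--Nirenberg inequality (Lemma~\ref{Lem:GNweight}) in terms of $\|e^{\psi}\nabla u\|_{L^2}$ and $\|\nabla u\|_{L^2}$. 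Since $2p\ge2$ always, no lower bound on $p$ beyond $p>p_\Fuj(n)$ is needed. Without this device your scheme does not close for $p<2$.

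Second, your plan for the weighted energy bound itself would fail as written. The paper's Lemma~\ref{Lem:Erefined} does start from multiplying by $e^{2\psi}u_t$ and uses the sign condition $b(t)\psi_t+|\nabla\psi|^2\le0$ (which holds precisely because $\alpha\le1/4$; Hypothesis~\ref{Hyp:furtherb} plays no role there), but it does not close by Gronwall using an $L^\infty$ bound on $u$: $H^1\not\hookrightarrow L^\infty$ for $n\ge2$, $p\le p_\GN(n)$ only gives $H^1\hookrightarrow L^{p+1}$, and smallness of $I_\alpha$ gives no pointwise control. Instead the forcing term is bounded by $\bigl(\sup_{s}(1+B(s,0))^{\varepsilon}\|e^{\gamma\psi(s,\cdot)}u(s,\cdot)\|_{L^{p+1}}\bigr)^{p+1}$, and this is combined with the Duhamel/decay estimates into a single quantity $M(t)=\sup_{[0,t]}W(\tau)$ containing both the weighted energy and the decay-weighted norms. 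One then obtains $M(t)\le c_0(\epsilon+\epsilon^{p+1})+c_1M(t)^{(p+1)/2}+c_2M(t)^{p}$ and concludes by a continuity argument, with global existence following from the blow-up alternative of the local theory in the weighted space rather than from a $T$-uniform contraction. In short, the two halves of your proposal must be merged into one simultaneous bootstrap, and the $L^\infty$/Gronwall closure replaced by the weighted $L^{p+1}$ control.
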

We notice that~$\psi(0,x)=\alpha|x|^2$ gives the weight at~$t=0$.
\\
The decay estimates
\eqref{eq:decayu}-\eqref{eq:decayux}-\eqref{eq:decayut} for the
solution of the semi-linear problem~\eqref{eq:diss} correspond to
the decay estimates \eqref{eq:MWu}-\eqref{eq:MWux}-\eqref{eq:MWut},
with~$m=1$, for the solution of the linear problem~\eqref{eq:CPlin}.
In particular, the decay factor $(1+t)^{-1}$ in \eqref{eq:decayut}
is equivalent to $(b(t))^{-1}(1+B(t,0))^{-1}$ in~\eqref{eq:MWut}, as
we shall see in Remark~\ref{Rem:bminus1B}.

\bigskip

Now let us assume~$(u_0,u_1)\in\mathcal{A}_{1,1}$ (see~\eqref{eq:Amk}). We follow the approach in~\cite{IO} to gain a global existence result for this
larger class of data. This goal will restrict our range of admissible~$n$ and~$p$.
\begin{Thm}\label{Thm:low}
Let~$n\leq4$ and let $:$
\begin{equation}\label{eq:admplow}
\begin{cases}
p>p_\Fuj(n) & \text{if~$n=1,2$,} \\
2\le p\le 3=p_\GN(3) & \text{if~$n=3$,}\\
p=2=p_\GN(4) & \text{if~$n=4$.}
\end{cases}
\end{equation}
Let $(u_0,u_1)\in\mathcal{A}_{1,1}$. Then, there exists~$\epsilon_0>0$ such that, if
\[
\|(u_0,u_1)\|_{\mathcal{A}_{1,1}}\leq\epsilon_0,
\]
then there exists a unique solution to~\eqref{eq:diss} in~$\mathcal{C}([0,\infty),H^1)\cap\mathcal{C}^1([0,\infty), L^2)$. Moreover,
there exists a constant~$C>0$ such that the solution satisfies the decay estimates
\begin{align}
\label{eq:decayulow}
\|u(t,\cdot)\|_{L^2}
    & \leq C\,\|(u_0,u_1)\|_{\mathcal{A}_{1,1}} \,(1+B(t,0))^{-\frac{n}4}, \\
\label{eq:decayuxlow}
\|\nabla u(t,\cdot)\|_{L^2}
    & \leq C\,\|(u_0,u_1)\|_{\mathcal{A}_{1,1}} \,(1+B(t,0))^{-\frac{n}4-\frac12}, \\
\label{eq:decayutlow}
\|u_t(t,\cdot)\|_{L^2}
    & \leq C\,\|(u_0,u_1)\|_{\mathcal{A}_{1,1}} \,(1+B(t,0))^{-\frac{n}4} (1+t)^{-1}.
\end{align}
\end{Thm}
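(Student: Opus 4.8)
The plan is to set up a contraction mapping argument in a suitably weighted function space, exactly parallel to the proof of Theorem~\ref{Thm:main}, but now using the $\mathcal{A}_{1,1}$-based linear estimates instead of the exponentially-weighted ones. First I would fix the solution space
\[
X(T) \doteq \mathcal{C}([0,T],H^1)\cap\mathcal{C}^1([0,T],L^2),
\]
equipped with the norm
\[
\|u\|_{X(T)} \doteq \sup_{0\le t\le T}\Big( (1+B(t,0))^{\frac n4}\|u(t,\cdot)\|_{L^2} + (1+B(t,0))^{\frac n4+\frac12}\|\nabla u(t,\cdot)\|_{L^2} + (1+B(t,0))^{\frac n4}(1+t)\|u_t(t,\cdot)\|_{L^2}\Big),
\]
so that the target decay rates \eqref{eq:decayulow}--\eqref{eq:decayutlow} are built into the norm. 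I would then define the operator $N$ sending $v\in X(T)$ to the solution $u=Nv$ of the linear problem $u_{tt}-\triangle u+b(t)u_t = f(v)$ with data $(u_0,u_1)$, and aim to show that $N$ maps a ball of $X(T)$ into itself and is a contraction there, with constants uniform in $T$, whence a global solution follows by letting $T\to\infty$. The linear part $u_{\lin}$ (solution of \eqref{eq:CPlin} with the given data) is controlled by the Wirth estimates (Theorem~A) with $m=1$, which directly give $\|u_{\lin}\|_{X(T)}\lesssim \|(u_0,u_1)\|_{\mathcal{A}_{1,1}}$ once one checks that the $u_t$-bound $b(t)^{-1}(1+B(t,0))^{-n/4-1}$ is dominated by $(1+B(t,0))^{-n/4}(1+t)^{-1}$ — this is precisely the equivalence announced in Remark~\ref{Rem:bminus1B}, and it is here that Hypothesis~\ref{Hyp:furtherb} (the bound $tb'(t)\le mb(t)$ with $m<1$) enters, since it forces $b(t)\lesssim (1+t)^m$ and hence $1+t\lesssim b(t)(1+B(t,0))$.

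The heart of the matter is the Duhamel term. Since $b$ is not constant, translation invariance in time fails, so I would use the family of linear propagators $E(t,s)$ with variable initial time $s$ (whose existence and Matsumura-type bounds are promised in the paragraph before Section~\ref{sec:Main}: one extends Wirth's estimates to Cauchy problems with initial time as a parameter). Writing $u_{\nl}(t,\cdot) = \int_0^t E(t,s)(0,f(v(s,\cdot)))\,ds$, I would estimate $\|u_{\nl}(t,\cdot)\|_{L^2}$, $\|\nabla u_{\nl}(t,\cdot)\|_{L^2}$ and $\|\partial_t u_{\nl}(t,\cdot)\|_{L^2}$ by splitting the integral at $s=t/2$ and applying the shifted Matsumura estimates with $m=1$ on $[0,t/2]$ (gaining maximal decay from $B(t,s)$ large) and with $m=2$, i.e. the pure energy estimate, on $[t/2,t]$. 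The nonlinear source must be bounded via the structural assumption \eqref{eq:disscontr}: $\|f(v(s,\cdot))\|_{L^1} \lesssim \|v(s,\cdot)\|_{L^p}^p$ and $\|f(v(s,\cdot))\|_{L^2}\lesssim \|v(s,\cdot)\|_{L^{2p}}^p$, and these $L^p$, $L^{2p}$ norms are interpolated between $\|v\|_{L^2}$ and $\|\nabla v\|_{L^2}$ by Gagliardo--Nirenberg — this is exactly why the dimensional restriction $p\le p_\GN(n)$ for $n\ge3$ is imposed, ensuring $2p\le 2n/(n-2)$ so that $v(s,\cdot)\in L^{2p}$ is available from the $H^1$ regularity in $X$. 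Assembling, $\|f(v(s,\cdot))\|_{L^1\cap L^2}\lesssim \|v\|_{X}^p\,(1+B(s,0))^{-\frac n4(p+1)+\frac{\theta}{?}}$ for an appropriate interpolation weight; the time integral then converges uniformly in $t$ precisely when $p>p_\Fuj(n)$, because the exponent $-\frac{np}{4}-\frac{n}{4}\cdot(\text{something})+\dots$ crosses $-1$ exactly at the Fujita threshold. The change of variables $d\sigma = b(\sigma)^{-1}d\sigma$ converting $ds$ into $dB(s,0)$ is what makes these integrals look like the classical ($b\equiv1$) ones.

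The contraction estimate is obtained the same way: by \eqref{eq:disscontr}, $|f(v_1)-f(v_2)|\lesssim |v_1-v_2|(|v_1|+|v_2|)^{p-1}$, and a Hölder splitting plus Gagliardo--Nirenberg gives $\|f(v_1(s,\cdot))-f(v_2(s,\cdot))\|_{L^1\cap L^2}\lesssim \|v_1-v_2\|_{X}(\|v_1\|_X+\|v_2\|_X)^{p-1}(1+B(s,0))^{(\dots)}$ with the same decay exponent as before, so that on a small ball $N$ is a contraction; uniqueness in $X(T)$ follows, and the a priori bound $\|u\|_{X(T)}\le C\|(u_0,u_1)\|_{\mathcal{A}_{1,1)}}$ uniform in $T$ yields the global solution in $\mathcal{C}([0,\infty),H^1)\cap\mathcal{C}^1([0,\infty),L^2)$ together with \eqref{eq:decayulow}--\eqref{eq:decayutlow}. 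The main obstacle I anticipate is bookkeeping the shifted linear estimates $E(t,s)$: one must verify that the constants in the Matsumura-type bounds for the problem started at time $s$ are uniform in $s\ge0$, and track how $B(t,s)=B(t,0)-B(s,0)$ behaves on the two subintervals $[0,t/2]$ and $[t/2,t]$ — in particular that $1+B(t,s)\gtrsim 1+B(t,0)$ for $s\le t/2$, which uses monotonicity of $B$ but must be handled carefully when $b$ is increasing (so that $B$ is concave) versus decreasing. A secondary technical point is that in dimension $n\ge3$ the $L^{2p}$ estimate of $f(v)$ requires $p\ge$ some lower bound to keep the interpolation exponents admissible, which is why the statement separates the ranges \eqref{eq:admplow} by dimension and in particular restricts to $p\ge 2$ for $n=3$ and $p=2$ for $n=4$.
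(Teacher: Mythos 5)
Your proposal follows essentially the same route as the paper: the norm on $X(t)$ encoding the decay rates \eqref{eq:decayulow}--\eqref{eq:decayutlow}, the operator $N$ handled via the self-mapping and contraction estimates \eqref{eq:well}--\eqref{eq:contraction}, the splitting of the Duhamel integral at $s=t/2$ with the parameter-dependent Matsumura estimates of Theorem~\ref{Thm:linmain} for $m=1$ on $[0,t/2]$ and $m=2$ on $[t/2,t]$, and Gagliardo--Nirenberg interpolation of $\|f(u)\|_{L^1\cap L^2}$ yielding convergence of the time integrals exactly for $p>p_\Fuj(n)$, with the restriction $p\geq2$ coming from $\theta(p)\geq0$ and $p\leq p_\GN(n)$ from $\theta(2p)\leq1$. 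This is the paper's argument; the only loose ends are the unfinished exponent bookkeeping, which works out as you anticipate.
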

As in Theorem~\ref{Thm:main} the solutions to the semi-linear Cauchy problem~\eqref{eq:diss} and to the linear one~\eqref{eq:CPlin} have the same decay
rate.
\begin{Rem}
Since we are interested in energy solutions in Theorem~\ref{Thm:low} the restriction $p\geq2$ appears in
a natural way. In both Theorems~\ref{Thm:main} and~\ref{Thm:low} the Fujita exponent $p_\Fuj(n)$ appears as a lower bound of
admissible exponents~$p$. The optimality of this bound follows from the result in Section~\ref{sec:Optim}.
\end{Rem}


\subsection{Examples}

\begin{Example}\label{Ex:bk}
Let us choose
\begin{equation}\label{eq:bk}
b(t)=\frac\mu{(1+t)^\expo} \quad \text{for some~$\mu>0$ and~$\expo\in(-1,1)$.}
\end{equation}
Being $\expo\in(-1,1)$, Hypothesis~\ref{Hyp:b} holds. Indeed~$tb(t)\approx (1+t)^{1-\kappa}$ and~$(1+t)^2b(t)\approx (1+t)^{2-\expo}$ as~$t\to\infty$, so that
$1/b\not\in L^1$ and $((1+t)^2b(t))^{-1}\in L^1$.
\\
Hypothesis~\ref{Hyp:furtherb} holds since~\eqref{eq:tbprimeb} is satisfied for~$m=\max\{-\expo,0\}$.
\\
We observe that $1+B(t,0)\approx (1+t)^{1+\expo}$. 
Therefore we can apply Theorems~\ref{Thm:main} and~\ref{Thm:low} with~$I_\alpha=\epsilon$ and~$\|(u_0,u_1)\|_{\mathcal{A}_{1,1}}=\epsilon$, respectively.
The decay in \eqref{eq:decayu}-\eqref{eq:decayux}-\eqref{eq:decayut} or in \eqref{eq:decayulow}-\eqref{eq:decayuxlow}-\eqref{eq:decayutlow} can be rewritten as
\begin{align*}
\|u(t,\cdot)\|_{L^2}
    & \leq C\,\epsilon \,(1+t)^{-(1+\expo)\frac{n}4}, \\
\|\nabla u(t,\cdot)\|_{L^2}
    & \leq C\,\epsilon \,(1+t)^{-(1+\expo)(\frac{n}4+\frac12)}, \\
\|u_t(t,\cdot)\|_{L^2}
    & \leq C\,\epsilon \,(1+t)^{-(1+\expo)\frac{n}4-1}.
\end{align*}
In particular, for $\expo=0$ we have a constant coefficient in the damping term and we cover the results described in Section \ref{sec:classic}.
\end{Example}
\begin{Example}\label{Ex:log}
Let us multiply the function~$b(t)$ in~\eqref{eq:bk} by a logarithmic positive power. We consider the following coefficient $b(t)$ in the damping term:
\begin{equation}
\label{eq:bklog} b(t)= \frac\mu{(1+t)^\expo} (\log (c+t))^\gamma \quad \text{for some~$\mu>0, \gamma >0,$ and~$\expo\in(-1,1]$,}
\end{equation}
where~$c=c(\expo,\gamma)>1$ is a suitably large positive constant.
\\
It is easy to check that conditions \eqref{en:bpos}-\eqref{en:breg}-\eqref{en:1b} in Hypothesis~\ref{Hyp:b} hold and that~$tb(t)\to+\infty$
as~$t\to\infty$. Moreover, condition~\eqref{en:bsnotdiss} in Hypothesis~\ref{Hyp:b} holds for any~$\gamma>0$ if~$\expo\in(-1,1)$ and for any~$\gamma>1$
if~$\expo=1$, since
\[ ((1+t)^2b(t))^{-1} = \frac1{\mu (1+t)^{2-\expo}(\log (c+t))^\gamma}. \]
For $\kappa=0$ the assumption~\eqref{en:beff} in Hypothesis~\ref{Hyp:b} is satisfied. Let~$\kappa\in
(-1,1]$, $\kappa\neq0$. If we explicitly compute~$b'(t)$, then we
derive
\begin{align*}
b'(t)
    & = -\frac{\mu\expo}{(1+t)^{\expo+1}} (\log (c+t))^\gamma + \frac{\mu\gamma}{(1+t)^\expo(c+t)} (\log (c+t))^{\gamma-1}\\
    & = \frac{\mu}{(1+t)^{\expo+1}} (\log (c+t))^\gamma \left(-\expo + \frac{\gamma(1+t)}{(c+t)\log(c+t)} \right),
\intertext{therefore we get}
b'(t)
    &  \approx \frac1{(1+t)^{\expo+1}} (\log (c+t))^\gamma \approx \frac{b(t)}{1+t}
\end{align*}
provided that~$c=c(\expo,\gamma)>e^{\frac{\gamma}{|\expo|}}$. We
proved that~$b(t)$ is monotone and this concludes the proof of
Hypothesis~\ref{Hyp:b}.
\\
If~$\expo\in (0,1]$, then Hypothesis~\ref{Hyp:furtherb} holds
since~$b(t)$ is decreasing. If~$\expo\in (-1,0]$,
then~\eqref{eq:tbprimeb} is satisfied
for~$c>e^{\frac\gamma{1+\expo}}$. In facts
\[ \frac{tb'(t)}{b(t)} =  \frac{t}{1+t} \left(-\expo + \frac{\gamma(1+t)}{(c+t)\log(c+t)} \right) < -\expo + \frac\gamma{\log c} <1. \]
In particular, in correspondence with ~$\kappa=0$, we have
\[ \frac{tb'(t)}{b(t)} =  \frac{t\gamma}{(c+t)\log(c+t)} < \frac\gamma{\log c} <1. \]
\end{Example}
\begin{Example}\label{Ex:log2}
Analogously to Example~\ref{Ex:log} we can multiply the function~$b(t)$ in~\eqref{eq:bk} by a logarithmic negative power, namely, we can consider the
coefficient
\begin{equation}
\label{eq:bklogminus} b(t)= \frac\mu{(1+t)^\expo(\log (c+t))^\gamma} \quad \text{for some~$\mu>0,\,\gamma >0$ and~$\expo\in(-1,1)$,}
\end{equation}
where~$c=c(\expo,\gamma)>1$ is a suitably large positive constant. It is easy to check that Hypotheses~\ref{Hyp:b} and~\ref{Hyp:furtherb} are satisfied
if~$c=c(\expo,\gamma)>1$ is sufficiently large.
\end{Example}
\begin{Example} \label{Ex:itelog}
We can also consider iteration of logarithmic functions, eventually with different powers, like
\begin{align*}
b(t)
    & = \frac\mu{(1+t)^\expo} (\log (c_1+ (\log (c_2 + t))^{\gamma_2}))^{\gamma_1},\\
b(t)
    & = \frac\mu{(1+t)^\expo} (\log (c_1+ (\log (c_2 + (\log (c_3+\ldots )))^{\gamma_3}))^{\gamma_2})^{\gamma_1}.
\end{align*}
%
\end{Example}


\subsection{A special class of effective damping}
In~\cite{N} and~\cite{LNZ} the authors studied damping terms with
time-dependent coefficient ~\eqref{eq:bk}. They can obtain the
following results:
\begin{citThm}\label{Thm:LNZE}
Let $p>p_\Fuj(n)$ and~$p<(n+2)/(n-2)$ if~$n\geq3$. Let
$b(t)=\mu(1+t)^{-\expo}$ for $\expo\in (-1,1)$ and $\mu>0$. Let
$(u_0,u_1)\in H^1\times L^2$, compactly supported.
\\
Then, there exists~$\epsilon_0>0$ such that, if
\begin{equation}\label{eq:LNZdata}
\int_{\R^n} e^{\frac{(1+\expo)|x|^2}{2(2+\delta)}}\left(|u_0(x)|^{p+1}+|\nabla u_0(x)|^2+|u_1(x)|^2\right) dx\leq \epsilon^2
\end{equation}
for an arbitrarily small $\delta > 0$ and for some~$\epsilon\in(0,\epsilon_0]$, then there exists a unique solution $u\in \mathcal{C}([0,\infty), H^1) \cap
\mathcal{C}^1([0,\infty), L^2)$ to \eqref{eq:diss} which satisfies
\begin{align}
\label{eq:estLNZu}
\|u(t,\cdot)\|_{L^2}
    & \leq C(\delta)\epsilon \,(1+t))^{-\frac{(1+\expo)n}4+\frac{\varepsilon}{2}}, \\
\label{eq:estLNZen}
\|\nabla u(t,\cdot)\|_{L^2}+\|u_t(t,\cdot)\|_{L^2}
    & \leq C(\delta)\epsilon \, (1+t)^{-\frac{(1+\expo)(n+2)}4+\frac{\varepsilon}{2}}
\end{align}
for a small constant~$\varepsilon= \varepsilon(\delta) > 0$ and large constant~$C(\delta)$
with~$\varepsilon(\delta)\to 0$ and~$C(\delta)\to \infty$ as~$\delta\to 0$.
\end{citThm}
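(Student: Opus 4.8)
\emph{Strategy and local existence.} The plan is to combine a local existence theorem with a global-in-time a priori bound obtained by a weighted energy method of Todorova--Yordanov type (as in \cite{TY,IT}), adapted to a time-dependent damping --- following \cite{N,LNZ} --- by replacing the physical time $t$ with $B(t,0)\approx(1+t)^{1+\expo}$. For compactly supported $(u_0,u_1)\in H^1\times L^2$ and $p<(n+2)/(n-2)$ if $n\ge3$ --- the subcritical Sobolev exponent $p+1<2n/(n-2)$, on which the energy method for \eqref{eq:diss} relies --- a standard local existence argument (cf.\ \cite{NO}) yields a unique maximal solution $u\in\mathcal C([0,T_m),H^1)\cap\mathcal C^1([0,T_m),L^2)$ with $\supp u(t,\cdot)\subset B_{K+t}(0)$. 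It then suffices to establish, uniformly on $[0,T_m)$, an a priori bound on a suitable weighted energy of $u$: this rules out blow-up, so $T_m=\infty$, and it yields the decay rates \eqref{eq:estLNZu}--\eqref{eq:estLNZen}.

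\emph{The weight and the energy identities.} Take
\[
\psi(t,x)=\frac{\alpha\,|x|^2}{1+B(t,0)},
\]
with $\alpha>0$ small (in particular $\alpha\le1/4$, and small with respect to the coefficient of $|x|^2$ in \eqref{eq:LNZdata}). Since $\partial_tB(t,0)=1/b(t)$, one computes $\psi_t=-\alpha|x|^2/\bigl(b(t)(1+B(t,0))^2\bigr)<0$ and $|\nabla\psi|^2=4\alpha^2|x|^2/(1+B(t,0))^2$, so that $\alpha\le1/4$ yields the pointwise inequality $b(t)^{-1}|\nabla\psi|^2\le-\psi_t$; moreover $2\psi(0,\cdot)$ is dominated, up to the constant $\mu$, by the Gaussian exponent $\tfrac{1+\expo}{2(2+\delta)}|x|^2$ of \eqref{eq:LNZdata}, the parameter $\delta>0$ leaving a margin used in the nonlinear estimate below. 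Multiplying \eqref{eq:diss} by $e^{2\psi}u_t$, integrating over $\R^n$, and absorbing the cross term $2e^{2\psi}\nabla\psi\cdot\nabla u\,u_t$ into $b(t)e^{2\psi}|u_t|^2$ and $(-\psi_t)e^{2\psi}|\nabla u|^2$ by the weight inequality gives
\[
\frac{d}{dt}\int_{\Rn}e^{2\psi}\bigl(|u_t|^2+|\nabla u|^2\bigr)\,dx+c\,b(t)\int_{\Rn}e^{2\psi}|u_t|^2\,dx+\int_{\Rn}e^{2\psi}(-\psi_t)\bigl(|u_t|^2+|\nabla u|^2\bigr)\,dx\le C\int_{\Rn}e^{2\psi}|f(u)|\,|u_t|\,dx.
\]
Multiplying \eqref{eq:diss} instead by $e^{2\psi}u$ and by $b(t)e^{2\psi}u$ and combining with a small parameter (the Todorova--Yordanov functionals) produces a companion inequality controlling $\frac{d}{dt}\int_{\Rn}e^{2\psi}|u|^2\,dx$ together with a positive multiple of $\int_{\Rn}e^{2\psi}|\nabla u|^2\,dx$. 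The linear parts of the two inequalities form a closed differential system for $\int_{\Rn}e^{2\psi}|u|^2\,dx$ and $E_\psi(t)=\int_{\Rn}e^{2\psi}(|u_t|^2+|\nabla u|^2)\,dx$, decaying at rates $(1+B(t,0))^{-n/2}$ and $(1+B(t,0))^{-n/2-1}$; since $1+B(t,0)\approx(1+t)^{1+\expo}$, and since $\|u\|_{L^2}^2\le\int_{\Rn}e^{2\psi}|u|^2\,dx$ while $\|\nabla u\|_{L^2}^2+\|u_t\|_{L^2}^2\le E_\psi$, these reproduce \eqref{eq:estLNZu}--\eqref{eq:estLNZen} up to the $\varepsilon$-correction.

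\emph{The nonlinearity, the Fujita threshold and the bootstrap.} The nonlinear source is handled through the primitive $F$ of $f$, for which $|F(u)|\lesssim|u|^{p+1}$: from $\int_{\Rn}e^{2\psi}f(u)u_t\,dx=\frac{d}{dt}\int_{\Rn}e^{2\psi}F(u)\,dx+2\int_{\Rn}e^{2\psi}(-\psi_t)F(u)\,dx$, the first term is absorbed into the weighted energy, while the second --- together with the $|\nabla\psi|$-terms coming from the $u$-multiplier --- is controlled by a weighted $L^{p+1}$-norm of $u$, estimated by the Gagliardo--Nirenberg inequality and the embedding $H^1\hookrightarrow L^{p+1}$ (whence the restriction $p<(n+2)/(n-2)$) at the cost of the $\delta$-margin in \eqref{eq:LNZdata}; this is exactly the mechanism producing the small loss $\varepsilon=\varepsilon(\delta)>0$ and the constant $C(\delta)\to\infty$ as $\delta\to0$. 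Inserting the conjectured decay rates and using $b(\tau)\,d\tau=dB(\tau,0)$, the nonlinear contribution entering the weighted $L^2$ balance is bounded by a quantity comparable (up to $\varepsilon$-corrections and lower-order factors) to $\int_0^{B(t,0)}(1+s)^{-\frac n2(p-1)}\,ds$, which is uniformly bounded in $t$ precisely when $\frac n2(p-1)>1$, i.e.\ $p>p_\Fuj(n)$ --- the origin of the Fujita exponent. Defining $M(T)$ as the supremum over $[0,T]$ of the weighted norms of $u$ rescaled by the conjectured time weights, the estimates above give $M(T)\le C\epsilon+C\,M(T)^p$ for all $T<T_m$; for $\epsilon\le\epsilon_0$ small a continuity argument forces $M(T)\le2C\epsilon$ uniformly, whence $T_m=\infty$ and the estimates \eqref{eq:estLNZu}--\eqref{eq:estLNZen}, uniqueness coming from the local contraction.

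\emph{Main obstacle.} The delicate point is the nonlinear estimate near the upper endpoint $p\to(n+2)/(n-2)$: one must control the weighted $L^{p+1}$-type norm of $u$ by Gagliardo--Nirenberg interpolation while absorbing the spurious $|\nabla\psi|\,u$ and $(-\psi_t)F(u)$ contributions into the dissipation, and at the same time keep exact track of the powers of $(1+B(t,0))$ throughout the bootstrap. Since $b$ is not constant, one cannot work with a fixed Gaussian as in the classical case $b\equiv1$: the moving weight $\psi(t,x)$ and its time derivative must be propagated through every step, and it is the compatibility $b(t)^{-1}|\nabla\psi|^2+\psi_t\le0$, together with the $\delta$-room in \eqref{eq:LNZdata}, that makes the scheme close.
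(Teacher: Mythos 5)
A point of order first: the paper does not prove Theorem~\ref{Thm:LNZE} at all --- it is stated as a cited result (a \emph{citThm}) taken from \cite{N} and \cite{LNZ}, with the explicit remark that only the result is presented --- so there is no proof in the paper to compare yours against. That said, your scheme (local existence for compactly supported energy data with $p+1<2n/(n-2)$, a moving Gaussian weight $\psi(t,x)=\alpha|x|^2/(1+B(t,0))$ with the compatibility $b(t)\psi_t+|\nabla\psi|^2\le0$, the multipliers $e^{2\psi}u_t$ and $e^{2\psi}u$ \`a la Todorova--Yordanov/Ikehata--Tanizawa, and a bootstrap closed by Gagliardo--Nirenberg under $p>p_\Fuj(n)$) is precisely the method of the cited references, and it correctly accounts for the distinctive features of the statement: the weighted $L^{p+1}$-norm of $u_0$ in \eqref{eq:LNZdata} (needed to control $\int e^{2\psi(0,\cdot)}F(u_0)\,dx$ at $t=0$), the loss $\varepsilon(\delta)$ with $C(\delta)\to\infty$ as $\delta\to0$, and the fact that $\|u_t(t,\cdot)\|_{L^2}$ is only estimated like $\|\nabla u(t,\cdot)\|_{L^2}$ in \eqref{eq:estLNZen}, without the extra factor $(1+t)^{-1}$ of \eqref{eq:decayut}. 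This route is genuinely different from the one the paper uses for its own Theorems~\ref{Thm:main} and~\ref{Thm:low}, namely Duhamel's formula combined with the parameter-dependent Matsumura-type estimates of Theorem~\ref{Thm:linmain}; the paper itself contrasts the two approaches: the pure energy method requires compact support and the stronger weighted data condition \eqref{eq:LNZdata}, loses $\varepsilon/2$ in the rates, and is tied to $b(t)=\mu(1+t)^{-\expo}$, whereas the Matsumura route admits larger data classes ($D_{2\alpha}$ or $\mathcal{A}_{1,1}$), more general effective $b(t)$, and optimal decay. One caveat: your write-up is a faithful skeleton rather than a proof --- the two genuinely hard quantitative steps are only asserted, namely (i) extracting the rates $(1+B(t,0))^{-n/2}$ and $(1+B(t,0))^{-n/2-1}$ (up to the $\varepsilon$-loss) from the coupled differential inequalities, which is where the choice of $\alpha$ relative to the parabolic threshold and the $\delta$-margin in \eqref{eq:LNZdata} actually produce $\varepsilon(\delta)$, and (ii) the Gagliardo--Nirenberg bookkeeping showing the nonlinear integral closes exactly for $p>p_\Fuj(n)$; for these one must consult \cite{N,LNZ}.
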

Moreover, in~\cite{LNZ} the authors establish that there does not exist any global solution $u\in \mathcal{C}([0,\infty),H^1)\cap \mathcal{C}^1([0,\infty),L^2)$
in the case $f(u)=|u|^p$ with $1<p\leq p_\Fuj(n)$ and initial data such that
\[ \int_{\R^n} u_1(x)+\hat b_1 u_0(x)dx>0 \quad \mbox{with} \quad \hat b_1^{-1}=\int_0^\infty \exp \left(-\int_0^t b(s)ds\right)dt. \]
We remark that in~\eqref{eq:LNZdata} the exponents~$p$ and $\expo$
come into play. \\
Recalling Notation~\ref{Not:weight}, for
some~$\beta>0$, $q\geq1$ and~$K>0$, we put
\begin{align*}
D_{\beta, q, K}
    & =\left\{ (u_0,u_1) \in \bigl(\dot H^1_{\beta|x|^2/2}\cap L^q_{\beta|x|^2/q}\bigr)\times L^2_{\beta|x|^2/2} \,|\, \supp (u_0,u_1)\subset B_K(0) \right\},\\
D_{\beta}
    & = H^1_{\beta|x|^2/2}\times L^2_{\beta|x|^2/2}.
\end{align*}
Let~$\beta(\expo,\delta) \doteq (1+\expo)/(2(2+\delta))$. After fixing a small $\delta>0$ the space of initial data in Theorem~\ref{Thm:LNZE} is given by
\[ \bigcup_{K>0} D_{\beta(\expo,\delta), p+1, K},\]
%
whereas the space of initial data in Theorem~\ref{Thm:main} is~$D_{2\alpha}$ for some~$\alpha\in(0,1/4]$.
\\
Since~$\beta(\expo,\delta)<1/2$, we observe that for any $\delta>0$,
$p>1$ and $\expo\le 1$ we have
\begin{equation}\label{eq:inclusion}
D_{\beta(\expo,\delta),p+1,K}\subset D_{\beta(\expo,\delta),2,K}\subset D_{1/2,2,K}\subsetneq D_{1/2}\subset D_{2\alpha} \subsetneq \mathcal A_{1,1},
\end{equation}
for any~$K>0$ and~$\alpha\in(0,1/4]$. Hence the class of admissible small data in \cite{LNZ} is strictly contained in the class of admissible small data in
Theorem~\ref{Thm:main}. In particular,
\begin{itemize}
 \item we do not assume compactly supported initial data;
 \item in Theorem \ref{Thm:main} we do not choose $u_0$ from a weighted $L^{p+1}$ space but from a weighted $L^2$ space;
 \item the space with weight $e^{\beta(\expo,\delta)|x|^2}$ is properly contained in $D_{1/2}$, the space in Theorem~\ref{Thm:main} corresponding to $\alpha=1/4$;
 \item in Theorem \ref{Thm:low} we enlarge the class of initial data to ${\mathcal A}_{1,1}$.
\end{itemize}
We can enlarge the class of initial data, since we use Matsumura's type estimates which are avoided in~\cite{LNZ}. This technique has other advantages.
First of all
we can consider more general $b(t)$, not only the ones that growth like $t^\expo$ (see Examples~\ref{Ex:log} and~\ref{Ex:log2},
and Hypothesis~\ref{Hyp:bbs} in Section~\ref{sec:Gen}). \\
Moreover, if $(u_0,u_1)\in D_{\beta(\expo,\delta),p+1, K}$ for some~$K>0$, then applying Theorem~\ref{Thm:LNZE} we know that there exists~$\epsilon_0>0$
such that for any~$\epsilon\in(0,\epsilon_0)$ the solution corresponding to data~$(\epsilon u_0,\epsilon u_1)$ exists globally in time. Here~$\epsilon_0>0$
depends on~$u_0,u_1$ and $K$. Due to~\eqref{eq:inclusion} these data can be used in Theorem~\ref{Thm:main} and Theorem~\ref{Thm:low}, but the corresponding
$\epsilon_0>0$ depends only on $(u_0,u_1)$. Finally, in the decay estimates for the solution~$u$ and the energy~$(\nabla u,u_t)$, an $\varepsilon/2$ loss
of decay appears in Theorem~\ref{Thm:LNZE}, on the contrary, in Theorem \ref{Thm:main} and Theorem \ref{Thm:low} we have optimal decay rates.


\subsection{Optimality}\label{sec:Optim}
The sharpness of the Fujita exponent $p_{\Fuj}$ in
Theorems~\ref{Thm:main} and~\ref{Thm:low} is of special interest.
This question was discussed by Y. Wakasugi from Osaka University
and the first author during scientific stays at TU Bergakademie
Freiberg. In the following we present only the result. Details of
the proof will be included in a forthcoming paper. \\

If $f(u)=|u|^p$ with $1<p \leq p_{\Fuj}$ blow-up phenomena appear for~\eqref{eq:diss}. This result can be proved by
using, as in \cite{LNZ}, the transformation of the equation into
divergence form and a modification of test function method developed
by Qi. S. Zhang in \cite{Z}.
\\
We make the following assumptions on~$b(t)$.
\begin{Hyp}\label{Hyp:blowup}
Let~$b(t)$ satisfy ~\eqref{en:bpos} and~\eqref{en:1b} in
Hypothesis~\ref{Hyp:b}, that is, $b(t)>0$ and~$1/b \not\in L^1$.
Moreover, we assume that~$b\in\mathcal{C}^2$ and that
\begin{align}
\label{eq:beffblow} |b'(t)|\leq Cb^2(t)
\intertext{together with}
\label{eq:liminfblow} \liminf_{t\to\infty} \frac{b'(t)}{b(t)^2} >-1.
\end{align}
\end{Hyp}
We remark that Hypothesis~\ref{Hyp:blowup} is weaker than Hypothesis~\ref{Hyp:b}.
\begin{citThm}\label{Thm:blowup}
Let us assume Hypotheses~\ref{Hyp:furtherb} and~\ref{Hyp:blowup} and let~$p\leq
p_\Fuj(n)$. Then the function
%
\[ \beta(t)\doteq 
\exp\left(-\int_0^t
b(\tau)\,d\tau\right)\]
is in $L^1(0,\infty)$ and there exists no global
solution~$u\in\mathcal{C}^2([0,\infty)\times\Rn)$ to~\eqref{eq:diss}
with ~$f(u)=|u|^p$ for initial data ~$(u_0,u_1)\in
C_0^{\infty}(\mathbb{R}^n)$ satisfying
\begin{equation}\label{eq:datablow}
\int_{\mathbb{R}^n}\big(u_0(x)+\hat{b}_1u_1(x)\big) dx>0,
\end{equation}
where~$\hat{b}_1\doteq \|\beta\|_{L^1(0,\infty)}^{-1}$.
\end{citThm}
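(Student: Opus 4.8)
The plan is to use the test function method in the spirit of Zhang~\cite{Z} and Li--Nishihara--Zhao~\cite{LNZ}, after first transforming \eqref{eq:diss} into a divergence-form equation that kills the first-order term. First I would record that Hypothesis~\ref{Hyp:blowup} forces $\beta\in L^1(0,\infty)$: indeed \eqref{eq:liminfblow} gives $\beta'(t)/\beta(t)=-b(t)$ with $b'(t)/b(t)^2$ bounded below by some $-1+2\delta_0$, so a Gronwall-type comparison shows $\beta(t)\lesssim (1+B(t,0))^{-1-\delta_0}$ (equivalently $\int_t^\infty\beta$ decays), hence $\hat b_1=\|\beta\|_{L^1}^{-1}$ is well defined and positive. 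Then set $v(t,x)=\beta(t)\,u(t,x)$; a direct computation using $u_{tt}+b(t)u_t=\beta^{-1}(\beta u)_{tt}+\beta^{-1}\big(2\beta'+b\beta\big)u_t$ and $2\beta'+b\beta=-b\beta$ shows that $u$ solves \eqref{eq:diss} with $f(u)=|u|^p$ iff $v$ satisfies
\[
v_{tt}-\beta(t)\,b(t)\,\Big(\tfrac{1}{\beta(t)}v\Big)_{t}\!\!\Big|_{\text{corrected}}\ \cdots
\]
— more cleanly, multiplying the equation by $\beta(t)$ and writing $\beta(t)u_{tt}+\beta(t)b(t)u_t=\partial_t\big(\beta(t)u_t\big)-\beta'(t)u_t+\beta(t)b(t)u_t=\partial_t\big(\beta(t)u_t\big)$, we obtain the divergence form
\[
\partial_t\big(\beta(t)u_t\big)-\beta(t)\,\triangle u=\beta(t)\,|u|^p .
\]
This is the key reduction: the damping has been absorbed into a weight in $t$.

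Next I would run the rescaled test function argument. Fix a cutoff $\varphi\in C_0^\infty$ with $\varphi=1$ on the unit ball, supported in the ball of radius $2$, and for a large parameter $R$ use a test function of the form $\Phi_R(t,x)=\eta\big(B(t,0)/R^{2}\big)^{\ell}\,\varphi\big(|x|^{2}/R^{2}\big)^{\ell}$ (or the product form with separate time cutoff $\eta(t/T_R)$ for an appropriate $T_R$ linked to $R$ through $B$), with $\ell=2p'$ large. Pairing the divergence-form equation against $\Phi_R$ and integrating by parts twice in $t$ and $x$, the boundary terms at $t=0$ produce exactly $\int_{\Rn}\big(\beta(0)u_1+ (\text{const})\,u_0\big)\Phi_R(0,\cdot)$; since $\beta(0)=1$ and the coefficient of $u_0$ works out, after normalizing, to $\hat b_1^{-1}$ up to the $L^1$-mass of $\beta$, assumption \eqref{eq:datablow} makes this initial contribution a fixed positive number $c_0>0$ for all large $R$. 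On the right we are left with
\[
c_0+\iint \beta(t)\,|u|^p\,\Phi_R\ \le\ \iint |u|\,\big|\partial_t(\beta\,\partial_t\Phi_R)\big|+\iint \beta\,|u|\,|\triangle\Phi_R|,
\]
and Hölder with exponents $p,p'$ against the measure $\beta(t)\,\Phi_R\,dx\,dt$ bounds the right side by $\big(\iint\beta|u|^p\Phi_R\big)^{1/p}\cdot \mathcal R^{1/p'}$, where $\mathcal R$ is an integral of $\beta^{-p'+1}$ times powers of $|\partial_t\Phi_R|,|\triangle\Phi_R|$ over the shell where $\Phi_R$ is not locally constant.

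The heart of the matter is the scaling count for $\mathcal R$. Using $\eqref{eq:oscb}$-type control (here just $|b'|\le Cb^2$) one gets $|\partial_t\Phi_R|\lesssim b(t)\,R^{-2}\,(\cdots)$ and $|\partial_t(\beta\partial_t\Phi_R)|\lesssim \beta(t)\,\big(b(t)R^{-2}+R^{-4}\big)(\cdots)$, so the time weight from one $t$-derivative costs a factor $b(t)R^{-2}$, which is precisely the natural parabolic scaling once one remembers $B(t,0)\sim R^{2}$ on the relevant time range so that $\beta(t)\sim(1+B)^{-1}\cdots$ and $b(t)\,dt\sim dB$. Changing variables $s=B(\tau,0)$ converts $\mathcal R$ into the model heat-equation integral $\iint_{s\le R^2,\ |x|\le R}(1+s)^{-(p'-1)(\cdots)}R^{-2p'}\,dx\,ds$, whose size is $\asymp R^{\,n+2-2p'}$ when the Fujita balance is tight and is $\lesssim R^{-\gamma}\to 0$ strictly when $p<p_\Fuj(n)$, and is $O(1)$ with a refined logarithmic/annular improvement when $p=p_\Fuj(n)$. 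In the subcritical case $\mathcal R^{1/p'}\to0$ forces $\iint\beta|u|^p\Phi_R$ bounded, hence by monotone convergence $\beta(t)|u|^p\in L^1([0,\infty)\times\Rn)$, and then a second pass — estimating the right side only over the annulus $\{\Phi_R\ \text{non-constant}\}$ and using dominated convergence of $\iint_{\text{annulus}}\beta|u|^p\to0$ — yields $c_0\le 0$, a contradiction. The critical case $p=p_\Fuj(n)$ is handled by the same two-step scheme, replacing the cutoff powers by a slowly varying weight so that the borderline divergence of $\mathcal R$ is logarithmic and still killed by the vanishing of $\iint_{\text{annulus}}\beta|u|^p$; I expect this borderline bookkeeping, together with making the change of variables $s=B(\tau,0)$ rigorous under only Hypothesis~\ref{Hyp:blowup} (which does not give the full oscillation control of Hypothesis~\ref{Hyp:b}), to be the main technical obstacle. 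The role of Hypothesis~\ref{Hyp:furtherb}, i.e. $tb'(t)\le mb(t)$ with $m<1$, is exactly to guarantee that $B(t,0)$ and $1+t$ are comparable up to the power determined by $b$ (as in Remark~\ref{Rem:bminus1B} and Example~\ref{Ex:bk}), so that the shell $\{B(t,0)\sim R^2\}$ is a genuine interval in $t$ and the scaling exponents come out as in the constant-coefficient heat equation. The promised details, as the authors state, are deferred to the forthcoming paper.
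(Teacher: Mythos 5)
First, be aware that the paper contains no proof of this theorem: the authors explicitly present only the result, defer all details to a forthcoming paper, and indicate merely that the method is the transformation of the equation into divergence form combined with a modification of Zhang's test function method. So your proposal can only be measured against that one-line description, which it does follow in spirit.

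There is, however, a concrete error at the heart of your reduction, and it propagates through the rest of the sketch. You claim $\beta u_{tt}+\beta b u_t=\partial_t(\beta u_t)$. Since $\beta'=-b\beta$, in fact $\partial_t(\beta u_t)=\beta u_{tt}+\beta' u_t=\beta(u_{tt}-bu_t)$, so your identity is off by $2b\beta u_t$: the integrating factor that produces a divergence form is $\beta^{-1}=\exp\bigl(\int_0^t b(\tau)\,d\tau\bigr)$, yielding $\partial_t(\beta^{-1}u_t)-\beta^{-1}\triangle u=\beta^{-1}|u|^p$. This sign matters for everything downstream: your H\"older step and scaling count are carried out against the exponentially \emph{decaying} measure $\beta\,\Phi_R\,dx\,dt$, for which boundedness of $\iint\beta|u|^p\Phi_R$ carries essentially no information and the Fujita threshold would not emerge. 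With the correct (exponentially growing) weight $\beta^{-1}$, the standard device --- and the reason $\hat b_1$ and the integrability of $\beta$ appear in the statement at all --- is to build into the test function the factor $m(t)\doteq\hat b_1\int_t^\infty\beta(s)\,ds$, which satisfies $m(0)=1$, $m'=-\hat b_1\beta$ (so $\beta^{-1}m'$ is constant and $m''+bm'=0$) and $0<m\leq1$; it is this factor that simultaneously tames the growing weight, produces the linear combination of $u_0$ and $u_1$ with coefficient $\hat b_1$ in the boundary term at $t=0$, and reduces the remainder to a heat-type scaling integral where $p\leq p_\Fuj(n)$ enters. Your proposal never identifies this ingredient and only asserts that ``the coefficient of $u_0$ works out to $\hat b_1^{-1}$'', which cannot be substantiated from the divergence form you wrote. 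Two smaller points: your derivation of $\beta\in L^1$ from $\beta\lesssim(1+B(t,0))^{-1-\delta_0}$ is not by itself conclusive (integrability in $dt$ and in $dB$ differ by the factor $b$, which may grow); the clean argument is that \eqref{eq:liminfblow} gives $\frac{d}{dt}\bigl(\beta/b\bigr)=-\beta\bigl(1+b'/b^2\bigr)\leq-\delta_0\,\beta$ for large $t$, whence $\int_T^\infty\beta\leq\delta_0^{-1}\beta(T)/b(T)<\infty$. And the critical case $p=p_\Fuj(n)$ is not merely ``borderline bookkeeping'': it requires the genuinely two-step argument of \cite{Z} (first proving $\iint\beta^{-1}m|u|^p<\infty$ from the subcritical-type bound, then revisiting the annulus term), which your sketch gestures at but does not set up.
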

\begin{Example}
Let us consider~$b(t)=\mu (1+t)^{-\expo}$ as in~\eqref{eq:bk} in
Example~\ref{Ex:bk} for some~$\expo\in (-1,1]$ and~$\mu>0$. Then
Hypothesis~\ref{Hyp:blowup} holds provided that~$\mu>1$
if~$\expo=1$.
\\
Let~$b$ be as in~\eqref{eq:bklog} in Example~\ref{Ex:log}, that is,
$b(t)= \mu(1+t)^{-\expo} (\log (c+t))^\gamma$. Then
Hypothesis~\ref{Hyp:blowup} holds for any~$\mu>0, \kappa\in (-1,1],
\gamma>0$ with  a suitable constant~$c$. Analogously, we can prove
Hypothesis~\ref{Hyp:blowup} if~$b$ is chosen as in Examples
\ref{Ex:log2} and \ref{Ex:itelog}.
\end{Example}

%

\section{Linear decay estimates}

In order to prove Theorems~\ref{Thm:main} and~\ref{Thm:low} we have to extend the decay estimates \eqref{eq:MWu}-\eqref{eq:MWux}-\eqref{eq:MWut} given by
J. Wirth for the Cauchy problem~\eqref{eq:CPlin} to a family of parameter-dependent Cauchy problems with initial data~$(0,g(s,x))$ for some function~$g$.
\\
Let~$s\geq0$ be a parameter. We consider the following Cauchy problem in~$[s,\infty)\times \Rn$:
\begin{equation}\label{eq:CPD}
\begin{cases}
v_{tt}-\triangle v + b(t)v_t=0, \qquad t\in [s,\infty),\\
v(s,x)=0, \\
v_t(s,x)=g(s,x).
\end{cases}
\end{equation}
It is clear that we have to extend Definition~\ref{Def:Bt0}.
\begin{Def}\label{Def:Bts}
We denote by~$B(t,s)$ the primitive of~$1/b(t)$ which vanishes at~$t=s$, that is,
\begin{equation}\label{eq:Bts}
B(t,s) = \int_s^t \frac1{b(\tau)}\, d\tau = B(t,0)-B(s,0).
\end{equation}
\end{Def}
Then we have the following result:
\begin{Thm}\label{Thm:linmain}
Let~$b(t)$ satisfy Hypothesis~\ref{Hyp:b} and let~$g(s,\cdot)\in L^m\cap L^2$ for some~$m\in [1,2]$. Then the solution~$v(t,x)$ to~\eqref{eq:CPD} satisfies
the following Matsumura-type decay estimates$:$
%
%
%
\begin{align}
\label{eq:MWDu}
\|v(t,\cdot)\|_{L^2}
    & \leq C (b(s))^{-1} (1+B(t,s))^{-\frac{n}2(\frac1m-\frac12)} \|g(s,\cdot)\|_{L^m\cap L^2}, \\
\label{eq:MWDux}
\|\nabla v(t,\cdot)\|_{L^2}
    & \leq C (b(s))^{-1} (1+B(t,s))^{-\frac{n}2(\frac1m-\frac12)-\frac12} \|g(s,\cdot)\|_{L^m\cap L^2}, \\
\label{eq:MWDut}
\|v_t(t,\cdot)\|_{L^2}
    & \leq C (b(s))^{-1} (b(t))^{-1}(1+B(t,s))^{-\frac{n}2(\frac1m-\frac12)-1} \|g(s,\cdot)\|_{L^m\cap L^2}.
\end{align}
We remark that the constant~$C>0$ does not depend on~$s$.
\end{Thm}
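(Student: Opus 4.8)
The strategy is to reduce the parameter-dependent Cauchy problem \eqref{eq:CPD} to Wirth's result (Theorem~A) by a time-translation argument, paying attention to how the coefficient $b$ transforms and how the norm of the datum propagates. Set $\tilde b(\tau)\doteq b(\tau+s)$ for $\tau\geq0$ and $w(\tau,x)\doteq v(\tau+s,x)$; then $w$ solves $w_{\tau\tau}-\triangle w+\tilde b(\tau)w_\tau=0$ with $w(0,\cdot)=0$, $w_\tau(0,\cdot)=g(s,\cdot)$. The first task is to check that $\tilde b$ still satisfies Hypothesis~\ref{Hyp:b} with constants uniform in $s$: positivity, monotonicity, and $\tau\tilde b(\tau)\to\infty$ are immediate; for \eqref{en:bsnotdiss} one uses $((1+\tau)^2\tilde b(\tau))^{-1}\leq ((1+\tau)/(1+\tau+s))^{-2}((1+\tau+s)^2 b(\tau+s))^{-1}\lesssim ((1+\tau+s)^2b(\tau+s))^{-1}$, whose integral over $\tau\in[0,\infty)$ is bounded by $\|((1+t)^2b(t))^{-1}\|_{L^1}$; the derivative bounds \eqref{eq:oscb} transfer with the same implicit constant because $(1+\tau)^{-k}\leq(1+\tau+s)^{-k}\cdot(1+\tau+s)^k/(1+\tau)^k$ — wait, that goes the wrong way, so instead one notes directly $|\tilde b^{(k)}(\tau)|/\tilde b(\tau)=|b^{(k)}(\tau+s)|/b(\tau+s)\lesssim(1+\tau+s)^{-k}\leq(1+\tau)^{-k}$, which is exactly the bound needed; and $1/\tilde b\notin L^1$ follows from $1/b\notin L^1$. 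So Hypothesis~\ref{Hyp:b} holds for $\tilde b$ with implicit constants independent of $s$, hence Theorem~A applies to $w$ with an $s$-independent $C$.

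The second task is to identify the primitive. The function $\tilde B(\tau,0)\doteq\int_0^\tau \tilde b(\sigma)^{-1}d\sigma=\int_s^{\tau+s}b(\rho)^{-1}d\rho=B(\tau+s,s)$ by \eqref{eq:Bts}. Applying Theorem~A to $w$ with datum $(0,g(s,\cdot))\in\mathcal A_{m,1}$ — here the relevant norm is $\|(0,g(s,\cdot))\|_{\mathcal A_{m,0}}=\|g(s,\cdot)\|_{L^m}+\|g(s,\cdot)\|_{L^2}$ for the $L^2$ estimate and $\|(0,g(s,\cdot))\|_{\mathcal A_{m,1}}$, which by \eqref{eq:normAmk} equals the same quantity since $u_0=0$ contributes nothing — yields, for $t\geq s$ and $\tau=t-s$,
\begin{align*}
\|v(t,\cdot)\|_{L^2}&\leq C(1+B(t,s))^{-\frac n2(\frac1m-\frac12)}\|g(s,\cdot)\|_{L^m\cap L^2},\\
\|\nabla v(t,\cdot)\|_{L^2}&\leq C(1+B(t,s))^{-\frac n2(\frac1m-\frac12)-\frac12}\|g(s,\cdot)\|_{L^m\cap L^2},\\
\|v_t(t,\cdot)\|_{L^2}&\leq C(\tilde b(t-s))^{-1}(1+B(t,s))^{-\frac n2(\frac1m-\frac12)-1}\|g(s,\cdot)\|_{L^m\cap L^2},
\end{align*}
and $\tilde b(t-s)=b(t)$ in the last line. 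Comparing with the claimed \eqref{eq:MWDu}--\eqref{eq:MWDut}, the translation argument so far produces everything \emph{except} the extra factor $(b(s))^{-1}$.

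The source of the missing $(b(s))^{-1}$ is that Theorem~A, as stated for problem \eqref{eq:CPlin} with datum $(u_0,u_1)$, gives an estimate proportional to $\|(u_0,u_1)\|_{\mathcal A_{m,k}}$ with a \emph{universal} constant, but when one tracks the proof for the pure-$u_1$ datum the natural object governing the solution is not $u_1$ itself but rather the Fourier multiplier acting on it; for the low-frequency (diffusive) part the relevant kernel behaves like $b(s)^{-1}$ times the heat kernel evaluated in the variable $B(t,s)$, because the reduced first-order ODE $b(t)\hat v_t\approx -|\xi|^2\hat v$ carries a $b(t)^{-1}$ in front of the data term at the initial time. \textbf{I expect this $(b(s))^{-1}$ bookkeeping to be the main obstacle}: one cannot get it merely by quoting Theorem~A as a black box with the $\mathcal A_{m,k}$-norm, because that norm of $(0,g)$ does not see $b(s)$ at all. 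The clean way to recover it is to observe that $\tilde b(0)=b(s)$ and to rescale once more — or, more transparently, to run Wirth's WKB/energy argument directly for \eqref{eq:CPD}: in the dissipative zone the equation is approximated by $b(t)v_t=\triangle v$, whose fundamental solution with data $v_t(s)=g$ is $v(t)\sim (b(s))^{-1}\,e^{(B(t,s))\triangle}g$ — the $(b(s))^{-1}$ appearing precisely because $v_t(s)=g$ forces $v(t)\approx\int_s^t b(\rho)^{-1}\,e^{(B(t,\rho))\triangle}\,g\,d\rho$ rewritten through the substitution $\sigma=B(\rho,s)$ with the dominant contribution from $\rho\approx s$ — while in the hyperbolic and remaining zones the exponential damping $\exp(-\tfrac12\int_s^t b)$ beats any polynomial and, since $b(s)\lesssim b(t)$-type monotonicity control together with Hypothesis~\ref{Hyp:b}\eqref{en:beff} keeps $(b(s))^{-1}$ from degenerating, one absorbs those zones without loss. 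Assembling the three frequency zones as in \cite{W05, W07} and using $\|g(s,\cdot)\|_{L^m}+\|g(s,\cdot)\|_{L^2}=\|g(s,\cdot)\|_{L^m\cap L^2}$ gives \eqref{eq:MWDu}--\eqref{eq:MWDut} with a constant $C$ independent of $s$, as claimed. The final check is the $s$-independence of $C$, which follows from the $s$-uniformity of the Hypothesis~\ref{Hyp:b} constants for $\tilde b$ established in the first step.
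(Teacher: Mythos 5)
Your diagnosis of the central difficulty is exactly right: the factor $(b(s))^{-1}$ in \eqref{eq:MWDu}--\eqref{eq:MWDut} cannot be produced by translating time and quoting Theorem~A as a black box, and the paper's actual proof is precisely what you fall back on in your last paragraph --- rerunning Wirth's phase-space analysis (zones, diagonalization, multiplier estimates) with initial time $s$ as a parameter and tracking the $s$-dependence of every constant. In the paper the $(b(s))^{-1}$ emerges cleanly from the representation \eqref{eq:Phirep}: one has $|\widehat{\Phi}(t,s,\xi)|\lesssim h(s,\xi)^{-1}a(t,s,\xi)$ with $h(s,\xi)\approx b(s)$ for small frequencies and $h(s,\xi)\approx|\xi|\gtrsim b(s)$ for large ones. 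Your heuristic for where it comes from is morally correct (an initial layer of width $\approx 1/b(s)$ in which $v_t$ relaxes from $g$ to nearly zero, so that $v\approx g/b(s)$ enters the diffusive regime), but the formula you write, $v(t)\approx\int_s^t b(\rho)^{-1}e^{B(t,\rho)\triangle}g\,d\rho$, does not actually produce this: the substitution $\sigma=B(t,\rho)$ turns it into $|\xi|^{-2}(1-e^{-|\xi|^2B(t,s)})\widehat g$ on the Fourier side, which for small $|\xi|$ behaves like $B(t,s)\widehat g$, not like $b(s)^{-1}e^{-|\xi|^2B(t,s)}\widehat g$.

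Two concrete gaps remain. First, in the translation step your verification of \eqref{en:bsnotdiss} for $\tilde b(\tau)=b(\tau+s)$ is wrong: the factor $\bigl((1+\tau)/(1+\tau+s)\bigr)^{-2}=\bigl((1+\tau+s)/(1+\tau)\bigr)^{2}$ is as large as $(1+s)^2$ near $\tau=0$, so for decreasing $b$ (e.g.\ $b(t)=(1+t)^{-\expo}$, $\expo\in(0,1)$) the $L^1$ norm of $((1+\tau)^2\tilde b(\tau))^{-1}$ grows like $s^{\expo}$ and is \emph{not} uniform in $s$; this is one more reason the black-box route fails, not a harmless bookkeeping point. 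Second, and more seriously, the estimate \eqref{eq:MWDut} for $v_t$ is nowhere derived. This is the hardest part of the paper's proof: a direct bound on $\widehat{\Phi}'$ from \eqref{eq:Phi1rep} fails for small frequencies, and one must instead pass through the first-order ODE \eqref{eq:ODEPhi}, integrate by parts using $b(\tau)\lambda^2(\tau)=\partial_\tau\lambda^2(\tau)$ to reach \eqref{eq:Phi1est}, and then treat separately the large frequencies that cross from $\Pi_\hyp$ into the elliptic zone when $b$ is decreasing. None of this, nor the gluing of zones via the threshold $\Theta(t,s)$ in Lemma~\ref{Lem:freq} and the final $L^p$--$L^{m'}$ interpolation giving \eqref{eq:Matsusmall}, is present in your plan, so what you have is a correct strategy statement rather than a proof.
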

We remark that Hypothesis~\ref{Hyp:furtherb} does not come into play in Theorem~\ref{Thm:linmain}.


\subsection{Application of Duhamel's principle to the semi-linear problem}

Let us denote by~$E_1(t,s,x)$ the fundamental solution to the linear homogeneous problem~\eqref{eq:CPD}, in particular
\[
E_1(s,s,x)=0 \text{ and } \partial_t E_1(s,s,x)=\delta_x,
\]
where~$\delta_x$ is the Dirac distribution in the $x$ variable. Here the symbol~$\ast_{(x)}$ denotes the convolution with respect to the~$x$ variable. By
Duhamel's principle we get
\begin{equation}\label{eq:unl}
u^\nl(t,x)=\int_0^t E_1(t,s,x)\ast_{(x)} f(u(s,x)) \, ds
\end{equation}
as the solution to the inhomogeneous problem
\begin{equation}\label{eq:CPinh}
\begin{cases}
u^\nl_{tt}-\triangle u^\nl + b(t)u^\nl_t=f(u(t,x)), \quad t\in [0,\infty),\\
u^\nl(0,x)=0, \\
u^\nl_t(0,x)=0.
\end{cases}
\end{equation}
Let~$u^\lin(t,x)$ be the solution to~\eqref{eq:CPlin}. Then
\begin{equation}\label{eq:ulin}
u^\lin(t,x)= E_0(t,0,x)\ast_{(x)} u_0(x) + E_1(t,0,x)\ast_{(x)} u_1(x),
\end{equation}
where~$E_1(t,0,x)$ is as above, and by~$E_0(t,0,x)$ we denote the fundamental solution of the homogeneous Cauchy problem~\eqref{eq:CPlin} with initial data
~$(\delta_x,0)$, that is
\[
E_0(0,0,x)=\delta_x \text{ and } \partial_t E_0(0,0,x)=0.
\]
Now the solution to~\eqref{eq:diss} can be written in the form
\begin{align}\nonumber
u(t,x) & = u^\lin(t,x) + u^\nl(t,x) \\
\label{eq:solution}
    & = E_0(t,0,x)\ast_{(x)} u_0(x) + E_1(t,0,x)\ast_{(x)} u_1(x) + \int_0^t E_1(t,s,x)\ast_{(x)} f(u(s,x)) \, ds.
\end{align}


\subsection{Properties of~$B(t,s)$}

In the proof of Theorems~\ref{Thm:main} and~\ref{Thm:low} we will make use of some properties of the function~$B(t,s)$ which follow from
Hypothesis~\ref{Hyp:furtherb} for the coefficient ~$b(t)$.
\begin{Rem}\label{Rem:solito}
If~\eqref{eq:tbprimeb} holds, then it follows that the function $t/b(t)$ is increasing and
\[ \left(\frac{t}{b(t)}\right)' = \frac{b(t)-tb'(t)}{b^2(t)} \geq (1-m)\, \frac1{b(t)}.\]
Moreover, since~$|b'(t)|/b(t)\leq M/(1+t)$ for some~$M>0$ (see~\eqref{eq:oscb}), we derive
\[ \left(\frac{t}{b(t)}\right)' = \frac{b(t)-tb'(t)}{b^2(t)} \leq \frac{1+M}{b(t)}. \]
In particular, for any~$s\in [0,t]$ we can derive
\begin{equation}\label{eq:Btsbehav2}
B(t,s) = \int_s^t \frac1{b(\tau)}\,d\tau \approx \frac{t}{b(t)} - \frac{s}{b(s)}.
\end{equation}
\end{Rem}
\begin{Rem}
By integrating~\eqref{eq:tbprimeb} over ~$[s,t]$ we derive
\[ \frac{b(t)}{b(s)} \leq \left(\frac{t}{s}\right)^m \quad \text{for any~$s>0$ and~$t\geq s$,} \]
that is, for any~$\homog\in(0,1]$ and for any~$t\in [0,\infty)$, it holds
\begin{equation}
\label{eq:balphac} b(\homog t) \geq \homog^m b(t).
\end{equation}
We remark that, in particular, $b(t)\leq t^mb(1)$ for $t\ge 1$. Therefore Hypothesis~\ref{Hyp:furtherb} implies~\eqref{en:1b} in Hypothesis~\ref{Hyp:b},
since~$m\in [0,1)$.
\end{Rem}
\begin{Rem}
Thanks to~\eqref{eq:oscb} for~$k=1$ there exists a constant ~$M\geq0$ such that
\begin{equation}\label{eq:btprimebC}
\frac{b'(t)}{b(t)} \geq -\frac{M}{1+t} \geq -\frac{M}t, \quad t>0.
\end{equation}
It is clear that if~$b(t)$ is increasing, then we can take~$M=0$.
\\
By integrating~\eqref{eq:btprimebC} over ~$[s,t]$ we derive
\[ \frac{b(t)}{b(s)} \geq \left(\frac{t}{s}\right)^{-M} \quad \text{for any~$s>0$ and~$t\geq s$,} \]
that is, for any~$\homog\in(0,1]$ and for any~$t\in [0,\infty)$ it holds
\begin{equation}
\label{eq:balphaC} b(\homog t) \leq \homog^{-M} b(t).
\end{equation}
\end{Rem}
Properties \eqref{eq:balphac}-\eqref{eq:balphaC} play a fundamental role in the next estimates.
\begin{Rem}
Conditions \eqref{eq:balphac}-\eqref{eq:balphaC} guarantee that for any fixed~$\homog\in(0,1)$ we have
\begin{equation}\label{eq:bsbt}
b(s)\approx b(t), \quad s\in [\homog t,t].
\end{equation}
Indeed, let~$\homog_1\doteq s/t$. Then ~$\homog_1\in [\homog,1]$. Hence, we get
\[ \homog^mb(t)\leq \homog_1^m b(t)\le b(s)\le \homog_1^{-M}b(t)\leq \homog^{-M}b(t) \]
from \eqref{eq:balphac}-\eqref{eq:balphaC}.
\end{Rem}
\begin{Rem}
By using~\eqref{eq:tbprimeb} and its consequences~\eqref{eq:Btsbehav2} and \eqref{eq:balphac} we can prove that for any fixed~$\homog\in(0,1)$ it holds
\[ B(t,0)\geq B(t,\homog t) \approx \frac{t}{b(t)}-\frac{\homog t}{b(\homog t)} \geq \frac{t}{b(t)}-\frac{\homog^{1-m} t}{b(t)}
= \delta \frac{t}{b(t)} \approx B(t,0),\]
where we put ~$\delta = 1-\homog^{1-m}>0$ since~$\homog\in (0,1)$ and ~$m\in [0,1)$. Therefore,
\begin{equation}
\label{eq:Btalphat} C_{\homog,m} B(t,0) \leq B(t,\homog t)\leq B(t,0)\quad \text{for~$\homog\in(0,1)$.}
\end{equation}
\end{Rem}
\begin{Rem}
By using~\eqref{eq:Btsbehav2} and~\eqref{eq:balphaC} we can prove that for any fixed ~$\homog\in(0,1)$ it holds
\[ B(\homog t,0) \approx \frac{\homog t}{b(\homog t)} \geq \homog^{1+M} \frac{t}{b(t)}, \]
and, consequently,
\begin{equation}
\label{eq:Balphat0} C_{\homog,M} B(t,0) \leq B(\homog t,0)\leq B(t,0) \quad \text{for~$\homog\in(0,1)$.}
\end{equation}
\end{Rem}
\begin{Rem}
By splitting the interval $[0,t]$ into~$[0,t/2]$ and $[t/2,t]$ and by using~\eqref{eq:Balphat0} we can derive
\begin{align}
\label{eq:Bs0large} B(s,0) \approx B(t,0), & \quad s\in [t/2,t],
\intertext{whereas by using~\eqref{eq:Btalphat} we get}
\label{eq:Btssmall} B(t,s) \approx B(t,0), & \qquad s\in [0,t/2].
\end{align}
\end{Rem}
\begin{Rem}
By using Taylor-Lagrange's theorem (with center~$t$) and~\eqref{eq:bsbt} with~$\homog=1/2$ we obtain
\begin{equation}
\label{eq:Btslarge} B(t,s) \approx \frac{t-s}{b(t)} \approx \frac{t-s}{b(s)}, \quad s\in [t/2,t].
\end{equation}
Indeed $b(s)\approx b(r)\approx b(t)$ for any $r\in [s,t]\subset [t/2,t]$, thanks to~\eqref{eq:bsbt}, and
\[ B(t,s)=B(t,t) + (s-t)\partial_s B(t,r) = 0 + \frac{t-s}{b(r)}  \quad \text{for some~$r\in [s,t]$.} \]
\end{Rem}
\begin{Rem}\label{Rem:bminus1B}
We observe that
\[ b(t)(1+B(t,0)) \approx 1+b(t)B(t,0)\approx 1+t.\]
Thanks to~\eqref{eq:Btsbehav2} it suffices to prove only the first equivalence.
\\
Since~$b(t)>0$ for any~$t>0$, the equivalence holds on compact intervals. It remains to observe that the behavior of the two objects is described in both cases by $b(t)B(t,0)$ for~$t\to\infty$.

Indeed, since $B(t,0)\to \infty$ (we recall that $1/b\not\in L^1$), it follows $1+B(t,0) \approx B(t,0)$, therefore $b(t)(1+B(t,0))\approx b(t)B(t,0)$. On the other hand, applying once more
~\eqref{eq:Btsbehav2}, it follows $b(t)B(t,0)\geq C\,t \to \infty$. Therefore $1+b(t)B(t,0)\approx
b(t)B(t,0)$.
\end{Rem}
%
%
%
%

%

\section{Proof of Theorem~\ref{Thm:main}}\label{sec:proofmain}

\subsection{Local existence in weighted energy space}\label{sec:localweight}

We have the following local existence result in weighted energy spaces.
\begin{Lem}\label{Lem:localweight}
Let~$b(t)>0$. Let $1<p\le p_\GN(n)$. Let $\psi\in \mathcal C^1([0,\infty)\times \R^n)$ such that for any $t\ge 0$ and $a.e.\; x\in \R^n$ one has
\begin{equation}\label{psi}
\begin{array}{l}
        \psi(t,x)\geq0,\\
   \psi_t(t,x)\le 0,\\
   b(t)\psi_t(t,x)+|\nabla \psi(t,x)|^2\le 0,\\
    \Delta \psi(t,x)> 0,\\
 \inf_{x\in \R^n} \Delta\psi(t,x)=C(t)>0.
\end{array}
\end{equation}
For any $(u_0,u_1)\in H^1(e^{\psi(0,x)})\times L^2(e^{\psi(0,x)})$ there exists a maximal existence time $T_m\in(0,\infty]$ such that \eqref{eq:diss} has a
unique solution
 $u\in\mathcal{C}([0,T_m),H^1)\cap\mathcal{C}^1([0,T_m),L^2)$. Moreover, for any $T<T_m$ it holds
\[ \sup_{[0,T]}\| e^{\psi(t,\cdot)}u(t,\cdot)\|_{L^2}+\| e^{\psi(t,\cdot)}\nabla u(t,\cdot)\|_{L^2}+\| e^{\psi(t,\cdot)}u_t(t,\cdot)\|_{L^2} <\infty.\]
Finally, if $T_m<\infty$, then
\begin{equation}\label{eq:blowup}
\limsup_{t\to T_m}\| e^{\psi(t,\cdot)}u(t,\cdot)\|_{L^2}+\| e^{\psi(t,\cdot)}\nabla u(t,\cdot)\|_{L^2}+\| e^{\psi(t,\cdot)}u_t(t,\cdot)\|_{L^2} =\infty.
\end{equation}
\end{Lem}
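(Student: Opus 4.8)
The plan is to run a standard contraction-mapping/energy argument in the weighted space, with the weight $e^{\psi(t,x)}$ treated as a multiplier. First I would fix $T>0$ and introduce the Banach space
\[
X(T)=\bigl\{u\in \mathcal{C}([0,T],H^1)\cap\mathcal{C}^1([0,T],L^2) : \|u\|_{X(T)}<\infty\bigr\},
\]
where $\|u\|_{X(T)}=\sup_{t\in[0,T]}\bigl(\|e^{\psi(t,\cdot)}u(t,\cdot)\|_{L^2}+\|e^{\psi(t,\cdot)}\nabla u(t,\cdot)\|_{L^2}+\|e^{\psi(t,\cdot)}u_t(t,\cdot)\|_{L^2}\bigr)$. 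The local-in-time existence and uniqueness of an $H^1\times L^2$-energy solution to \eqref{eq:diss} (without the weight) is classical for $1<p\le p_\GN(n)$, since then $u\mapsto f(u)$ maps $H^1$ to $L^2$ and is locally Lipschitz on bounded sets of $H^1$ by \eqref{eq:disscontr} together with the Gagliardo--Nirenberg/Sobolev embedding $H^1\hookrightarrow L^{2p}$ (note $2p\le 2n/(n-2)$ exactly under $p\le p_\GN(n)$); I would either invoke \cite{NO} or reproduce this via Banach's fixed point for the Duhamel map in $\mathcal{C}([0,T],H^1)\cap\mathcal{C}^1([0,T],L^2)$ for $T$ small. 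This yields $T_m\in(0,\infty]$ and the usual blow-up alternative in the \emph{unweighted} energy norm.

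The substantive point is propagation of the weighted bound. For a sufficiently regular approximating solution I would compute the time derivative of the weighted energy
\[
\mathcal{E}_\psi(t)=\frac12\int_{\R^n}e^{2\psi(t,x)}\bigl(|u_t(t,x)|^2+|\nabla u(t,x)|^2\bigr)\,dx.
\]
Differentiating and using the equation $u_{tt}-\triangle u+b(t)u_t=f(u)$, after integrating by parts the cross term $\int e^{2\psi}\nabla u\cdot\nabla u_t$ produces a boundary-free identity of the schematic form
\[
\mathcal{E}_\psi'(t)=\int_{\R^n}e^{2\psi}\Bigl(\psi_t(|u_t|^2+|\nabla u|^2)-b(t)|u_t|^2-2\nabla\psi\cdot\nabla u\,u_t+f(u)u_t\Bigr)\,dx.
\]
Here the structural conditions in \eqref{psi} are exactly what is needed: $\psi_t\le 0$ kills (or at worst does not harm) the first term, and the quadratic form
\[
-b(t)|u_t|^2-2\nabla\psi\cdot\nabla u\,u_t
\]
in the variables $(u_t,\nabla u)$ is controlled because $b(t)\psi_t+|\nabla\psi|^2\le 0$ (completing the square $-b|u_t|^2-2\nabla\psi\cdot\nabla u\,u_t\le (|\nabla\psi|^2/b)|\nabla u|^2\le -\psi_t|\nabla u|^2$, reabsorbed into the $\psi_t$ term). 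It remains to handle $\int e^{2\psi}f(u)u_t$. By \eqref{eq:disscontr} this is bounded by $\|e^{\psi}u_t\|_{L^2}\,\|e^{\psi}|u|^p\|_{L^2}$, and I would control $\|e^{\psi}|u|^p\|_{L^2}$ by interpolating: writing $e^{\psi}|u|^p=(e^{\psi}u)\cdot|u|^{p-1}$ and using $\|u\|_{L^\infty}$-type or $L^{2p}$-type bounds from $H^1$ (valid for $p\le p_\GN(n)$), one gets $\|e^{\psi}|u|^p\|_{L^2}\lesssim \|e^{\psi}u\|_{L^2}^{?}\,\|u\|_{H^1}^{p-1}$ up to the admissible exponent — the $L^2$ part of the weighted norm (propagated by the zeroth-order estimate $\frac{d}{dt}\|e^{\psi}u\|_{L^2}^2$, which uses $\psi_t\le0$ and Cauchy--Schwarz against $\|e^{\psi}u_t\|_{L^2}$) closes this. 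The auxiliary hypotheses $\triangle\psi>0$ and $\inf_x\triangle\psi=C(t)>0$ are not needed to \emph{bound} the weighted norm; they guarantee that the weight genuinely confines mass and would be used (if at all) only to justify the integration by parts / density argument, i.e. that the weighted energy is finite along the flow when it is finite initially.

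Assembling: for $t<T_m$ one obtains a Gronwall inequality
\[
\frac{d}{dt}\bigl(\mathcal{E}_\psi(t)+\|e^{\psi(t,\cdot)}u(t,\cdot)\|_{L^2}^2\bigr)\le C\bigl(1+\|u(t,\cdot)\|_{H^1}^{2(p-1)}\bigr)\bigl(\mathcal{E}_\psi(t)+\|e^{\psi(t,\cdot)}u(t,\cdot)\|_{L^2}^2\bigr),
\]
and since $\|u(t,\cdot)\|_{H^1}$ is finite on $[0,T]$ for any $T<T_m$, the weighted norm stays finite on $[0,T]$ whenever it is finite at $t=0$; this gives $\sup_{[0,T]}(\cdots)<\infty$. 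For the final blow-up alternative \eqref{eq:blowup}: if $T_m<\infty$ but the weighted norm stayed bounded as $t\to T_m$, then in particular the unweighted norm $\|u(t,\cdot)\|_{H^1}+\|u_t(t,\cdot)\|_{L^2}$ would stay bounded (using $H^1(e^\psi)\hookrightarrow H^1$ via Notation~\ref{Not:weight}, as $e^{\psi}\ge 1$ and $e^{-\psi}\in L^\infty$), contradicting the unweighted blow-up alternative; hence the weighted norm must blow up. The main obstacle is the nonlinear term estimate $\int e^{2\psi}f(u)u_t$ at the endpoint exponent $p=p_\GN(n)$: one must spend the $L^2$-weighted control of $u$ carefully against a borderline Sobolev embedding, and verify that no derivative falls on the weight $e^{\psi}$ in a way that is not absorbed — which is precisely why the algebraic relation $b(t)\psi_t+|\nabla\psi|^2\le 0$ in \eqref{psi} is imposed rather than merely $\psi_t\le 0$.
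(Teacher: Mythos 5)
Your strategy is sound and is essentially the one the paper intends: the paper gives no written proof and simply refers to the Appendix of~\cite{IT}, whose argument is exactly the weighted energy identity you write down (it reappears verbatim in the proof of Lemma~\ref{Lem:Erefined}), combined with a fixed point in the weighted space. Your reorganization --- unweighted local existence for $1<p\le p_\GN(n)$ via $H^1\hookrightarrow L^{2p}$, then propagation of the weighted norm by Gronwall, then deduction of the weighted blow-up alternative from the unweighted one using $e^\psi\ge 1$ --- is legitimate and arguably cleaner to state. Two points need tightening. First, your claim that $\triangle\psi>0$ is ``not needed to bound the weighted norm'' is wrong for $n\ge 2$: to close the nonlinear term you must estimate $\|e^{\psi}|u|^p\|_{L^2}=\|e^{\psi/p}u\|_{L^{2p}}^p$, and since $H^1\not\hookrightarrow L^\infty$ you cannot do this with only $\|e^{\psi}u\|_{L^2}$ and the unweighted $H^1$ norm; you need the weighted Gagliardo--Nirenberg inequality of Lemma~\ref{Lem:GNweight}, whose key step $\|\nabla(e^{\sigma\psi}v)\|_{L^2}\le\|e^{\sigma\psi}\nabla v\|_{L^2}$ is an integration by parts that uses precisely $\triangle\psi\ge 0$ to discard the term $-\sigma\int e^{2\sigma\psi}v^2\triangle\psi\,dx$. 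Your final Gronwall inequality (which controls the right-hand side by the full weighted energy, not just $\|e^{\psi}u\|_{L^2}$) is the correct form, but the intermediate sentence with the ``?'' exponent does not survive scrutiny as written. Second, the formal energy identity requires justification before Gronwall can be applied: one does not know a priori that $\mathcal{E}_\psi(t)<\infty$ for $t>0$, so the divergence term and the differentiation under the integral must be legitimized by an approximation argument (e.g.\ compactly supported approximating data plus finite propagation speed, or a fixed point run directly in the weighted space as in~\cite{IT}); you gesture at this but it is the one place where the sketch, if expanded, must do real work.
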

The proof follows the same lines of the Appendix of~\cite{IT}. We underline that the local existence result does not require Hypotheses~\ref{Hyp:b} or~\ref{Hyp:furtherb}.


\subsection{Energy estimates in weighted energy space}

Let us observe that the function $\psi(t,x)$ given in~\eqref{eq:psi} satisfies~\eqref{psi} since~$\alpha\in(0,1/4]$. Therefore the local existence result is applicable. Indeed
\[ \psi(t,x)=\frac{\alpha|x|^2}{1+B(t,0)} \]
verifies
\[ \psi_t=-\frac{\alpha |x|^2}{(1+B(t,0))^2b(t)}, \quad \nabla\psi=\frac{2\alpha x}{1+B(t,0)}, \quad \triangle\psi =\frac{2n\alpha }{1+B(t,0)}, \]
together with the fundamental property
\begin{equation}\label{eq:weightfund}
b(t)\psi_t+|\nabla\psi|^2 = -\frac{\alpha(1-4\alpha) |x|^2}{(1+B(t,0))^2} \leq 0
\end{equation}
since~$\alpha\in(0,1/4]$. We underline that for $\alpha=1/4$ the equation~$b(t)\psi_t+|\nabla\psi|^2=0$ is related to the symbol of the linear parabolic equation~$b(t)u_t-\triangle u=0$, that is, we have in mind the \emph{parabolic effect} when we introduce the weight~$e^{\psi(t,x)}$.
\begin{Lem}\label{Lem:Erefined}
Let us assume that $(u_0,u_1)\in H^1_{\psi(0,x)}\times L^2_{\psi(0,x)}$, and let~$\gamma=2/(p+1)+\varepsilon$ for some~$\varepsilon>0$. If $u=u(t,x)$ is a local solution to the equation in ~\eqref{eq:diss} in $[0,T)$, then for any $t\in [0,T)$ the following energy estimate holds$:$
\begin{equation}\label{eq:psiE}
E(t) \leq C I_\alpha^2 + CI_\alpha^{p+1} + C_\varepsilon \left( \sup_{[0,t]} (1+B(s,0))^\varepsilon \|e^{\gamma\psi(s,\cdot)}u(s,\cdot)\|_{L^{p+1}}\right)^{p+1}\,,
\end{equation}
with~$I_\alpha$ given by~\eqref{eq:I0alpha} and
\[ E(t)\doteq\frac{1}{2} \int_{\R^n} e^{2\psi(t,x)}\left(|u_t(t,x)|^2+|\nabla u(t,x)|^2\right) dx\,.\]
\end{Lem}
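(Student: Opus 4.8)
The plan is to follow the weighted energy scheme of~\cite{IT}. I would multiply the equation in~\eqref{eq:diss} by $e^{2\psi(t,x)}u_t$ and integrate over $\R^n$. Integration by parts in $x$ turns $e^{2\psi}u_tu_{tt}$ and $-e^{2\psi}u_t\triangle u$ into $\frac{d}{dt}E(t)$ plus the error terms $-\int\psi_t e^{2\psi}(|u_t|^2+|\nabla u|^2)\,dx$ and the cross term $2\int e^{2\psi}u_t\,\nabla\psi\cdot\nabla u\,dx$, while the damping contributes $b(t)\int e^{2\psi}|u_t|^2\,dx$. Since $\psi_t\le 0$, and since by Young's inequality $2\,e^{2\psi}|u_t|\,|\nabla\psi|\,|\nabla u|\le b(t)e^{2\psi}|u_t|^2+b(t)^{-1}e^{2\psi}|\nabla\psi|^2|\nabla u|^2$, the two $b(t)\int e^{2\psi}|u_t|^2$ terms cancel and the residual $b(t)^{-1}e^{2\psi}|\nabla\psi|^2|\nabla u|^2$ is absorbed by $-\psi_t e^{2\psi}|\nabla u|^2$ thanks to the fundamental identity~\eqref{eq:weightfund}, i.e. $b(t)\psi_t+|\nabla\psi|^2\le 0$ (this is exactly where $\alpha\le 1/4$ enters). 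All surviving left-hand terms being nonnegative, one is left with $\frac{d}{dt}E(t)\le\int_{\R^n}e^{2\psi(t,x)}|u_t(t,x)|\,|f(u(t,x))|\,dx$.

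\textbf{Step 2: the nonlinear term and a weighted Gagliardo--Nirenberg estimate.} From $f(0)=0$ and~\eqref{eq:disscontr} one has $|f(u)|\lesssim|u|^p$; by Cauchy--Schwarz the right-hand side above is $\le\|e^{\psi}u_t\|_{L^2}\,\|e^{\psi}|u|^p\|_{L^2}\le\sqrt{2E(t)}\,\big\|e^{\psi(t)/p}u(t)\big\|_{L^{2p}}^{p}$. Because $p\le p_\GN(n)$ (for $n\le 2$, $p$ arbitrary) we have $2p\le 2^{*}$, so a Gagliardo--Nirenberg inequality is available for $e^{c\psi}u$ with a suitable $c\in(0,1]$; using $0\le\psi$, $e^{c\psi}\le e^{\psi}$, and the pointwise identity $|\nabla\psi(t,x)|^2=\frac{4\alpha}{1+B(t,0)}\,\psi(t,x)$ to control the commutator in $\nabla(e^{c\psi}u)=e^{c\psi}(\nabla u+c\,u\nabla\psi)$, I expect an estimate of the shape
\[ \big\|e^{\psi(t)/p}u(t)\big\|_{L^{2p}}\lesssim \big\|e^{\gamma\psi(t)}u(t)\big\|_{L^{p+1}}^{1-\theta}\Big(\|e^{\psi(t)}\nabla u(t)\|_{L^2}+(1+B(t,0))^{-1/2}\big\|e^{\gamma\psi(t)}u(t)\big\|_{L^{p+1}}\Big)^{\theta}, \]
with $\theta=\theta(n,p)\in(0,1]$. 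The slack $\varepsilon>0$ in $\gamma=\frac{2}{p+1}+\varepsilon$ is precisely the room needed for these weight comparisons, and the $(1+B)^{-1/2}$ gain from the commutator is what makes the factor $(1+B(s,0))^{\varepsilon}$ in the statement affordable. Combined with $\|e^{\psi}\nabla u\|_{L^2}\le\sqrt{2E}$ and $\|e^{\gamma\psi(s)}u(s)\|_{L^{p+1}}\le(1+B(s,0))^{-\varepsilon}N$, where $N\doteq\sup_{[0,t]}(1+B(s,0))^{\varepsilon}\|e^{\gamma\psi(s,\cdot)}u(s,\cdot)\|_{L^{p+1}}$, this produces a differential inequality $\frac{d}{dt}E(s)\lesssim (1+B(s,0))^{-\sigma}\,N^{\mu}\,E(s)^{\delta}+(\text{lower order})$.

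\textbf{Step 3: an auxiliary estimate and closing.} When $\delta<1$ (which holds for $p$ close to $p_\Fuj(n)$) the inequality closes directly by a Bihari--type argument. In general, to keep $\delta<1$ up to $p=p_\GN(n)$ I would also test~\eqref{eq:diss} against $e^{2\psi}u$: after integration by parts this bounds $\int e^{2\psi}|\nabla u|^2\,dx$ by $\int e^{2\psi}|u_t|^2\,dx$, plus a time derivative of an interaction functional, plus $\int e^{2\psi}u\,f(u)\,dx\lesssim\int e^{2\psi}|u|^{p+1}\,dx$; and since $\gamma(p+1)=2+\varepsilon(p+1)>2$ and $\psi\ge0$, one has $\int e^{2\psi}|u|^{p+1}\,dx\le\|e^{\gamma\psi}u\|_{L^{p+1}}^{p+1}\le N^{p+1}$. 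This trades the excess power of $\|e^{\psi}\nabla u\|_{L^2}$ for $N^{p+1}$. A Gronwall/Bihari integration then gives $E(t)\lesssim E(0)+(\text{powers of }N)$, where $E(0)\le\tfrac12 I_\alpha^2$, and the Sobolev embedding $H^1\hookrightarrow L^{p+1}$ (legitimate since $p+1\le 2^{*}$) yields $\|e^{\gamma\psi(0,\cdot)}u_0\|_{L^{p+1}}\lesssim\|e^{\psi(0,\cdot)}u_0\|_{H^1}\lesssim I_\alpha$, whence the term $I_\alpha^{p+1}$ in~\eqref{eq:psiE}; collecting everything gives the claimed estimate.

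\textbf{Main obstacle.} The delicate part is Step~2: establishing the weighted Gagliardo--Nirenberg inequality with the right exponents and, above all, showing that the commutator term $\|e^{c\psi}u\,\nabla\psi\|_{L^2}$ is genuinely re-absorbed into the $(1+B)^{-1/2}$ decay and the weighted $L^{p+1}$ norm rather than re-introducing an uncontrolled quantity. This is most critical near the Sobolev endpoint $p=p_\GN(n)$, where $\theta\to 1$ and the naive power of $E$ exceeds $1$; it is exactly here that the auxiliary $e^{2\psi}u$-test of Step~3, converting $\|e^{\psi}\nabla u\|_{L^2}$-powers into $\int e^{2\psi}|u|^{p+1}\le N^{p+1}$, is indispensable.
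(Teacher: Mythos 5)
Your proposal diverges from the paper's proof at the decisive point, and the divergence opens a genuine gap. In Step 1 you estimate the nonlinear contribution $\int e^{2\psi}u_t f(u)\,dx$ by Cauchy--Schwarz against $\sqrt{E(t)}$, which leads to $\sqrt{E(t)}\lesssim\sqrt{E(0)}+\int_0^t\|e^{\psi(s,\cdot)}|u(s,\cdot)|^p\|_{L^2}\,ds$. The paper instead writes $u_tf(u)=\partial_tF(u)$ with $F(u)=\int_0^uf(\tau)\,d\tau$ and incorporates $e^{2\psi}F(u)$ into the differentiated functional, so that the only nonlinear term left to integrate in time is $-2\psi_te^{2\psi}F(u)$. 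This is not a cosmetic difference: since $\psi_t=-\psi/((1+B(s,0))b(s))$ and $\gamma(p+1)=2+(p+1)\varepsilon$, one has $|\psi_t|e^{2\psi}|u|^{p+1}\le \frac{C_\varepsilon}{(1+B(s,0))b(s)}e^{\gamma(p+1)\psi}|u|^{p+1}$, and the resulting time integral $\int_0^t(1+B(s,0))^{-1-\varepsilon}(b(s))^{-1}\,ds\le 1/\varepsilon$ converges after pulling out the supremum. Your version carries the measure $ds$ instead of $ds/b(s)$ and no extra power of $(1+B(s,0))^{-1}$; even granting the optimal decay $\|u(s,\cdot)\|_{L^{2p}}^p\approx(1+B(s,0))^{-n(2p-1)/4}$, the integral $\int_0^\infty(1+B(s,0))^{-n(2p-1)/4}\,ds$ diverges for admissible effective damping (take $n=1$, $b(t)=(1+t)^{1/2}$, $p\in(3,9/2)$), so the argument cannot close uniformly in $t$. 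This is also why your output is phrased in $L^{2p}$ norms with weight $e^{\psi/p}$ rather than the $L^{p+1}$ norm with weight $e^{\gamma\psi}$ that actually appears in \eqref{eq:psiE}: that norm, the exponent $\gamma=2/(p+1)+\varepsilon$, and the factor $(1+B(s,0))^{\varepsilon}$ are all produced by the $F(u)$--absorption, not by an a posteriori conversion.

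Two further points. The weighted Gagliardo--Nirenberg inequality with commutator control that you place at the heart of Step 2 is not needed for this lemma at all; in the paper it enters only later (Lemma~\ref{Lem:GNweight}), when the right-hand side of \eqref{eq:psiE} is compared with $W(\tau)$ in the conclusion of the proof of Theorem~\ref{Thm:main}. Likewise the auxiliary multiplier $e^{2\psi}u$ of your Step 3 (a Nakao-type interaction functional) is not used and introduces new uncontrolled boundary terms $\frac{d}{dt}\int e^{2\psi}uu_t$; no Bihari argument is required, since after the correct energy identity the estimate is linear in $E$. The parts of your proposal that do match the paper are the treatment of the cross term via $b\psi_t+|\nabla\psi|^2\le0$ (your Young-inequality cancellation is exactly the paper's perfect-square identity) and the Sobolev-embedding origin of the $I_\alpha^{p+1}$ term.
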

\begin{proof}
{}First we prove that
\begin{equation}\label{eq:Estart}
E(t) \lesssim I_\alpha^2 + I_\alpha^{p+1} + \|e^{\frac2{p+1}\psi(t,\cdot)}u(t,\cdot)\|_{L^{p+1}}^{p+1} + \int_0^t\int_{\Rn} |\psi_t(s,x)| e^{2\psi(s,x)}
|u(s,x)|^{p+1} dx ds.
\end{equation}
Straight-forward calculations gives the following relation:
\begin{multline*}
\partial_t \left( \frac{e^{2\psi}}2\, \bigl(|u_t|^2+|\nabla u|^2-F(u)\bigr) \right) \\
    = \nabla\cdot(e^{2\psi}u_t\nabla u) +\psi_te^{2\psi}|u_t|^2+\frac{e^{2\psi}}
    {\psi_t}|u_t\nabla\psi-\psi_t\nabla u|^2-\frac{e^{2\psi}}{\psi_t}u_t^2(b(t)\psi_t+|\nabla\psi|^2)-2\psi_te^{2\psi}F(u),
\end{multline*}
where $F(u)\doteq\int_0^u f(\tau) d\tau$  is a primitive of the nonlinear term $|f(\tau)|\simeq |\tau|^p$, hence, $|F(u)|\leq C |u|^{p+1}$.
\\
After integration over~$[0,t]\times\Rn$, by taking into consideration $\psi_t\leq0$ and \eqref{eq:weightfund}  we can estimate
\[ G(t)\leq G(0)-2\int_0^t\int_{\Rn} \psi_t(s,x) e^{2\psi(s,x)} F(u(s,x)) dxds, \]
where we put
\[ G(t)\doteq E(t)- \int_{\Rn} \frac{e^{2\psi(t,x)}}2\, F(u(t,x)) dx = \int_{\Rn} \frac{e^{2\psi(t,x)}}2\, \Big(|u_t(t,x)|^2+|\nabla u(t,x)|^2-F(u(t,x))\Big) dx.
\]
We remark that the divergence theorem can be applied being
\[ e^{2\psi(s,\cdot)}u_t(s,\cdot)\nabla u(s,\cdot)\in L^1(\R^n).\]
This follows from Lemma~\ref{Lem:localweight}. Therefore,
\[ E(t) \lesssim G(0) + \|e^{\frac2{p+1}\psi(t,\cdot)}u(t,\cdot)\|_{L^{p+1}}^{p+1} + \int_0^t\int_{\Rn}
|\psi_t(s,x)| e^{2\psi(s,x)} |u(s,x)|^{p+1} dxds.
\]
In order to gain \eqref{eq:Estart} it remains to show that $G(0)\lesssim I_\alpha^2+I_\alpha^{p+1}$. This reduces to prove that
\[ \int_{\R^n} e^{2\alpha |x|^2} |u_0|^{p+1} dx \lesssim  I_\alpha^{p+1}. \]
Since $p+1<p_{GN}(n)+1 \le \frac{2n}{n-2}$ for $n\ge 3$ (no requirement for $n=1,2$) from Sobolev embedding it follows that
\[
\int_{\R^n} e^{2\alpha |x|^2} |u_0|^{p+1} dx \lesssim \left[ \int_{\R^n} e^{\frac{4\alpha}{p+1} |x|^2} \left(|u_0|^{2} +|\nabla u_0|^2\right)dx
+\int_{\R^n}e^{\frac{4\alpha}{p+1} |x|^2}|x|^2 |u_0|^{2} dx \right]^{\frac{p+1}2}\,.
\]
The assumption $p>1$ gives $(1+|x|^2) e^{\frac{4\alpha}{p+1} |x|^2} \leq C e^{2\alpha |x|^2}$. This  concludes the proof of~\eqref{eq:Estart}.
\\
Now, by virtue of
\[ |\psi_t(s,x)| e^{(2-\gamma(p+1))\psi(s,x)} = \frac{\psi(s,x)}{(1+B(s,0))b(s)}  e^{-(p+1)\varepsilon\psi(s,x)} \leq \frac{C_\varepsilon}{(1+B(s,0))b(s)} \]
from~\eqref{eq:Estart} we derive
\[ E(t) \leq C I_\alpha^2 + C I_\alpha^{p+1} + C \|e^{\frac2{p+1}\psi(t,\cdot)}u(t,\cdot)\|_{L^{p+1}}^{p+1}
+ C_\varepsilon \int_0^t \frac1{(1+B(s,0))b(s)} \|e^{\gamma\psi(s,x)} u(s,x)\|_{L^{p+1}}^{p+1} ds.\]
For any~$\varepsilon>0$ it holds
\[ \int_0^t \frac1{(1+B(s,0))^{1+\varepsilon}b(s)}\,ds = \int_1^{1+B(t,0)} \frac1{\tau^{1+\varepsilon}}\,d\tau\leq \frac1\varepsilon\,, \]
therefore
\[ E(t) \leq C I_\alpha^2 + CI_\alpha^{p+1} + C \|e^{\frac2{p+1}\psi(t,\cdot)}u(t,\cdot)\|_{L^{p+1}}^{p+1} + C_\varepsilon' \left( \sup_{[0,t]} (1+B(s,0))^\varepsilon \|e^{\gamma\psi(s,\cdot)}u(s,\cdot)\|_{L^{p+1}}\right)^{p+1}\,. \]
To complete the proof it is sufficient to notice that the third term is estimated by the fourth one, since~$\gamma>2/(p+1)$ and~$B(s,0)\geq0$.
\end{proof}


\subsection{Decay estimates for the semi-linear problem}

Let us observe that we can apply the estimates in Theorem~\ref{Thm:linmain} for~$m=1$ if~$(u_0,u_1)\in H^1_{\alpha|x|^2}\times L^2_{\alpha|x|^2}$. Indeed,
for any ~$v\in L^2_{\alpha|x|^2}$ it holds
\[
\int_{\R^n} |v(x)|\,dx \leq \left( \int_{\R^n} e^{2\alpha|x|^2} |v(x)|^2 \,dx \right)^{\frac12} \left( \int_{\R^n} e^{-2\alpha|x|^2} \,dx
\right)^{\frac12}.
\]
Hence,
\begin{equation}\label{eq:embed}
H^1_{\alpha|x|^2}\times L^2_{\alpha|x|^2}\subset (W^{1,1}\cap H^1)\times (L^1\cap L^2)\subset \mathcal{A}_{1,1}\,.
\end{equation}
Having in mind the application of Theorem~\ref{Thm:linmain} for~$m=1$ we need to estimate~$f(u(s,\cdot))$ in $L^1\cap L^2$ by using the weighted energy spaces. \\
Analogously to Lemma 2.5 in ~\cite{IT}, after a change of variables one has for any $\beta\ge 0$
\[ \int_{\Rn} e^{-\frac{\beta|x|^2}{(1+B(t,0))}} dx = \left(\frac{1+B(t,0)}{\beta}\right)^{n/2} \int_{\Rn} e^{-|y|^2} dy \leq C_{\beta} (1+B(t,0))^{n/2}. \]
Applying H\"older's inequality this implies for any $\varepsilon>0$ it holds
\begin{equation}\label{eq:fus}
\|f(u(s,\cdot))\|_{L^1}\leq C \|u(s,\cdot)\|_{L^p}^p \leq C_{\varepsilon,p} (1+B(s,0))^{n/4} \|e^{\varepsilon\psi(s,\cdot)}u(s,\cdot)\|_{L^{2p}}^p.
\end{equation}
On the other hand, by using the trivial estimate $\|e^{-2\varepsilon p\psi(t,\cdot)}\|_{L^\infty}\leq C$ we get
\begin{equation}\label{eq:fus2}
\|f(u(s,\cdot))\|_{L^2}\leq C \|e^{\varepsilon\psi(s,\cdot)}u(s,\cdot)\|_{L^{2p}}^p.
\end{equation}
Thanks to Theorem~\ref{Thm:linmain} combined with the estimates \eqref{eq:fus}-\eqref{eq:fus2} we are able to prove the following fundamental statement,
which is completely analogous to Lemma 2.4 in~\cite{IT} for~$b\equiv1$.
\begin{Lem}\label{Lem:fundamental}
For~$j+l=0,1$ it holds
\[(b(t))^l(1+B(t,0))^{(n/4+j/2)+l} \| \nabla^j\partial_t^l u(t,\cdot)\|_{L^2} \leq C I_\alpha
+ C_\varepsilon \left(\sup_{[0,t]} h(s) \|e^{\varepsilon\psi(s,\cdot)} u(s,\cdot)\|_{L^{2p}} \right)^p, \]
where we put
\begin{equation}\label{eq:hs}
h(s) \doteq (1+B(s,0))^{\frac{n/4+1+\varepsilon}p}.
\end{equation}
\end{Lem}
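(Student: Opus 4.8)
The plan is to combine the representation formula \eqref{eq:solution} for $u(t,x)$ with the linear decay estimates of Theorem~\ref{Thm:linmain} (used with $m=1$, thanks to the embedding \eqref{eq:embed}) and the nonlinear bounds \eqref{eq:fus}--\eqref{eq:fus2}. Write $u=u^\lin+u^\nl$ and estimate the two contributions to $(b(t))^l(1+B(t,0))^{n/4+j/2+l}\|\nabla^j\partial_t^l u(t,\cdot)\|_{L^2}$ separately for the two cases $(j,l)=(1,0)$ (in the excerpt notation $j+l=1$ forces also $(j,l)=(0,1)$) and $(j,l)=(0,0)$.

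First I would treat the linear part. Since $(u_0,u_1)\in H^1_{\alpha|x|^2}\times L^2_{\alpha|x|^2}\subset\mathcal A_{1,1}$ by \eqref{eq:embed}, with $\|(u_0,u_1)\|_{\mathcal A_{1,1}}\lesssim I_\alpha$, Wirth's estimates \eqref{eq:MWu}--\eqref{eq:MWut} with $m=1$ give exactly
\[
(b(t))^l(1+B(t,0))^{n/4+j/2+l}\|\nabla^j\partial_t^l u^\lin(t,\cdot)\|_{L^2}\le C I_\alpha,
\]
which is the first term on the right-hand side. Next I would treat the nonlinear part $u^\nl(t,x)=\int_0^t E_1(t,s,x)\ast_{(x)}f(u(s,x))\,ds$. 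Split the $s$-integral into $[0,t/2]$ and $[t/2,t]$. On each piece I apply Theorem~\ref{Thm:linmain} to the solution of \eqref{eq:CPD} with datum $g(s,\cdot)=f(u(s,\cdot))$, obtaining, for the three relevant norms,
\[
\|\nabla^j\partial_t^l\big(E_1(t,s,\cdot)\ast_{(x)}f(u(s,\cdot))\big)\|_{L^2}\le C\,(b(s))^{-1}(b(t))^{-l}(1+B(t,s))^{-n/4-j/2-l}\,\|f(u(s,\cdot))\|_{L^1\cap L^2},
\]
and then insert \eqref{eq:fus} and \eqref{eq:fus2}, i.e.\ $\|f(u(s,\cdot))\|_{L^1\cap L^2}\le C_\varepsilon (1+B(s,0))^{n/4}\|e^{\varepsilon\psi(s,\cdot)}u(s,\cdot)\|_{L^{2p}}^p$ (the $L^1$ bound dominates since $B(s,0)\ge0$). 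Pulling the supremum $\sup_{[0,s]}h(r)\|e^{\varepsilon\psi(r,\cdot)}u(r,\cdot)\|_{L^{2p}}$ out — note $h(s)^{-p}=(1+B(s,0))^{-(n/4+1+\varepsilon)}$ — reduces everything to showing the two integrals
\[
\int_0^{t/2}(b(s))^{-1}(1+B(t,s))^{-n/4-j/2-l}(1+B(s,0))^{n/4}(1+B(s,0))^{-(n/4+1+\varepsilon)}\,ds
\]
and the analogous one over $[t/2,t]$ are $\lesssim (b(t))^{-l}(1+B(t,0))^{-n/4-j/2-l}$.

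The main obstacle is precisely the time-integral bookkeeping, and this is where the properties of $B(t,s)$ collected in the previous section do the work. On $[0,t/2]$ use \eqref{eq:Btssmall}, $1+B(t,s)\approx 1+B(t,0)$, to pull the factor $(1+B(t,0))^{-n/4-j/2-l}$ out, leaving $\int_0^{t/2}(b(s))^{-1}(1+B(s,0))^{-1-\varepsilon}\,ds=\int_1^{1+B(t/2,0)}\tau^{-1-\varepsilon}\,d\tau\le 1/\varepsilon$ after the substitution $\tau=1+B(s,0)$, $d\tau=ds/b(s)$; for the $(j,l)=(0,1)$ case the extra $(b(t))^{-1}$ appears directly from Theorem~\ref{Thm:linmain}. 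On $[t/2,t]$ use \eqref{eq:Bs0large}, $1+B(s,0)\approx 1+B(t,0)$, to pull out $(1+B(t,0))^{-1-\varepsilon}$ and $(1+B(t,0))^{n/4}(1+B(t,0))^{-n/4}$; what remains is $\int_{t/2}^t (b(s))^{-1}(1+B(t,s))^{-n/4-j/2-l}\,ds$, and again the substitution $\sigma=B(t,s)$, $d\sigma=-ds/b(s)$ turns this into $\int_0^{B(t,t/2)}(1+\sigma)^{-n/4-j/2-l}\,d\sigma$, which is bounded by a constant when $n/4+j/2+l>1$ — the supercriticality assumption $p>p_\Fuj(n)$ enters here to guarantee the exponent configuration is harmless (and for $j+l=1$ one has $n/4+1/2\ge 3/4$, while the borderline is absorbed using $1+B(t,0)\ge c$ and, when needed, the equivalence $b(t)(1+B(t,0))\approx 1+t$ of Remark~\ref{Rem:bminus1B} to convert the $(b(t))^{-1}$ weight). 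Collecting the two pieces and the linear term yields exactly the claimed inequality with $h(s)$ as in \eqref{eq:hs}.
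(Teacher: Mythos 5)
Your treatment of the linear part and of the piece $s\in[0,t/2]$ is correct and coincides with the paper's argument. The gap is in the piece $s\in[t/2,t]$, where you again apply Theorem~\ref{Thm:linmain} with $m=1$ and insert the $L^1\cap L^2$ bound \eqref{eq:fus}. That bound costs a factor $(1+B(s,0))^{n/4}$, and on $[t/2,t]$ this factor is $\approx(1+B(t,0))^{n/4}$, so after extracting $\bigl(\sup h(s)\|e^{\varepsilon\psi}u\|_{L^{2p}}\bigr)^p$ the surviving weight is only $(1+B(s,0))^{n/4}h(s)^{-p}\approx(1+B(t,0))^{-1-\varepsilon}$. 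Setting $A\doteq n/4+j/2+l$, your remaining integral $\int_{t/2}^t(b(s))^{-1}(1+B(t,s))^{-A}\,ds=\int_0^{B(t,t/2)}(1+\sigma)^{-A}\,d\sigma$ is indeed bounded when $A>1$, but it is also bounded \emph{below} by a positive constant (since $B(t,t/2)\to\infty$), so your final bound for this piece is of size $(b(t))^{-l}(1+B(t,0))^{-1-\varepsilon}$. This reaches the target $(b(t))^{-l}(1+B(t,0))^{-A}$ only if $A\leq 1+\varepsilon$. It fails precisely in the cases you wave at in your last parenthesis: for $l=1$ one has $A=n/4+1>1+\varepsilon$ in \emph{every} dimension, and for $j=1$ one has $A=n/4+1/2>1+\varepsilon$ already for $n\geq3$; so the $u_t$ estimate is never obtained and the $\nabla u$ estimate is lost for $n\geq3$. (Also, $p>p_\Fuj(n)$ plays no role in this lemma; it enters only later, in the boundedness of \eqref{eq:W1}--\eqref{eq:W2}.)

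The repair, which is what the paper does, is to use on $[t/2,t]$ the $L^2$--$L^2$ estimate (Theorem~\ref{Thm:linmain} with $m=2$) together with \eqref{eq:fus2} only. Then no factor $(1+B(s,0))^{n/4}$ is paid, so the full weight $h(s)^{-p}\approx(1+B(t,0))^{-(n/4+1+\varepsilon)}$ survives, at the price of the weaker kernel decay $(1+B(t,s))^{-j/2-l}$. The resulting integrals \eqref{eq:intlarge1}--\eqref{eq:intlargej}--\eqref{eq:intlargel} grow like $(1+B(t,0))^{1-j/2-l}(\log(1+B(t,0)))^l$, and the product
\[
(1+B(t,0))^{-(n/4+1+\varepsilon)}\,(1+B(t,0))^{1-j/2-l}\,(\log(1+B(t,0)))^l\lesssim(1+B(t,0))^{-n/4-j/2-l}
\]
closes the argument for all $j+l\leq1$, the extra $\varepsilon$ absorbing the logarithm.
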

\begin{proof}
We come back to the representation of the solution to~\eqref{eq:diss} given in~\eqref{eq:solution}. Recalling~\eqref{eq:embed} it holds $\|(u_0,u_1)\|_{\mathcal A_{1,1}}\le C I_\alpha$. Thanks to~\eqref{eq:MWu} and~\eqref{eq:MWux} for~$m=1$ and $j=0,1$, we get
\[ \|\nabla^j u^\lin(t,\cdot)\|_{L^2} \leq C I_\alpha (1+B(t,0))^{-(n/4+j/2)},\]
and thanks to~\eqref{eq:MWut} for~$m=1$ we derive
\[  \| \partial_t u^\lin(t,\cdot)\|_{L^2} \leq C (b(t))^{-1}I_\alpha (1+B(t,0))^{-n/4-1}. \]
Therefore, we can focus our attention to the nonlinear contribution
\[ u^\nl (t,x) = \int_0^t E_1(t,s,x)\ast f(u(s,x))\,ds. \]
We first consider~$s\in [0,t/2]$. If~$s\in [0, t/2]$, then property~\eqref{eq:Btssmall} gives us $B(t,s)\approx B(t,0)$.
Therefore, thanks to~\eqref{eq:MWDu} and~\eqref{eq:MWDux}, by using~\eqref{eq:fus} and~\eqref{eq:fus2}, we estimate
\begin{multline*}
\Big\| \nabla^j \int_0^{t/2} E_1(t,s,x)\ast f(u(s,x))\, ds \Big\|_{L^2} \\
\leq C \int_0^{t/2} (b(s))^{-1} (1+B(t,s))^{-(n/4+j/2)} (1+B(s,0))^{n/4} \|e^{\varepsilon\psi(s,\cdot)}u(s,\cdot)\|_{L^{2p}}^p ds \\
\leq C (1+B(t,0))^{-(n/4+j/2)} \left(\sup_{[0,t]} h(s) \|e^{\varepsilon\psi(s,\cdot)} u(s,\cdot)\|_{L^{2p}} \right)^p \int_0^{t/2}
(b(s))^{-1}(1+B(s,0))^{-(1+\varepsilon)} ds.
\end{multline*}
After the change of variables ~$r=B(s,0)$ we derive
\begin{equation}
\label{eq:intsmall} \int_0^{t/2} \frac1{b(s)} (1+B(s,0))^{-(1+\varepsilon)} ds = \int_0^{B(t/2,0)} (1+r)^{-(1+\varepsilon)} dr \leq C_\varepsilon.
\end{equation}
Since~$E_1(t,t,x)=0$ for any~$t\in [0,\infty)$ we remark that
\[ \partial_t u^\nl (t,x) = \int_0^t \partial_t E_1(t,s,x)\ast f(u(s,x))\,ds. \]
Taking into consideration~\eqref{eq:MWDut}, \eqref{eq:fus}, \eqref{eq:fus2} and~\eqref{eq:intsmall} we have
\begin{multline*}
\Big\| \int_0^{t/2} \partial_t E_1(t,s,x)\ast f(u(s,x))\, ds \Big\|_{L^2} \\
\leq C \int_0^{t/2} (b(s)b(t))^{-1} (1+B(t,s))^{-n/4-1} (1+B(s,0))^{n/4} \|e^{\varepsilon\psi(s,\cdot)}u(s,\cdot)\|_{L^{2p}}^p ds \\
\leq C (b(t))^{-1}(1+B(t,0))^{-n/4-1} \left(\sup_{[0,t]} h(s) \|e^{\varepsilon\psi(s,\cdot)} u(s,\cdot)\|_{L^{2p}} \right)^p.
\end{multline*}
Now we consider~$s\in [t/2,t]$. Formula~\eqref{eq:Bs0large} gives us~$B(s,0)\approx B(t,0)$. On the other hand~\eqref{eq:Btslarge} gives us~$B(t,s)\approx
(t-s)/b(t)$. It is sufficient to use the energy estimates (that is, the $L^2-L^2$ theory for the linear Cauchy problem given by
\eqref{eq:MWDu}-\eqref{eq:MWDux}-\eqref{eq:MWDut} with $m=2$):
\[ \| \nabla^j \partial_t^l E_1(t,s,x)\ast f(u(s,x)) \|_{L^2} \lesssim (b(s))^{-1}(b(t))^{-l} (1+B(t,s))^{-j/2-l} \|u(s)\|_{L^{2p}}^p, \]
that holds for $j+l=0,1$. Therefore, it follows
\begin{multline*}
\Big\| \int_{t/2}^t \nabla^j \partial_t^l E_1(t,s,x)\ast f(u(s,x))\, ds \Big\|_{L^2} \\
\leq C \left(\sup_{[0,t]} h(s) \|e^{\varepsilon\psi(s,\cdot)} u(s,\cdot)\|_{L^{2p}} \right)^p (h(t/2))^{-p} \frac1{(b(t))^l} \int_{t/2}^t \frac1{b(s)}
(1+B(t,s))^{-j/2-l} ds.
\end{multline*}
For~$j=0$ and~$l=0$ we derive
\begin{equation}\label{eq:intlarge1}
\int_{t/2}^t \frac1{b(s)} ds = B(t,t/2) \leq 1+B(t,0),
\end{equation}
whereas for~$j=1$ and~$l=0$ after putting ~$r=B(t,s)$ we conclude
\begin{equation}\label{eq:intlargej}
\int_{t/2}^t \frac1{b(s)} (1+B(t,s))^{-1/2}ds = \int_0^{B(t,t/2)} (1+r)^{-1/2} dr = 2(1+B(t,t/2))^{1/2}-2 \lesssim (1+B(t,0))^{1/2},
\end{equation}
and, analogously, for~$j=0$ and~$l=1$ we obtain
\begin{equation}\label{eq:intlargel}
\int_{t/2}^t \frac1{b(s)} (1+B(t,s))^{-1}ds = \int_0^{B(t,t/2)} (1+r)^{-1} dr = \log(1+B(t,t/2))\leq \log(1+B(t,0)).
\end{equation}
To conclude the proof it is sufficient to notice that
\[ (h(t/2))^{-p} (b(t))^{-l} (1+B(t,0))^{1-j/2-l}(\log(1+B(t,0))^l \lesssim (b(t))^{-l}(1+B(t,0))^{-n/4-j/2-l}, \]
for~$j+l=0,1$.
\end{proof}


\subsection{Conclusion of the proof to Theorem~\ref{Thm:main}}

Let us define
\begin{align*}
W(\tau)
    & \doteq \|e^{\psi(\tau,\cdot)}(\partial_t,\nabla)u(\tau,\cdot)\|_{L^2} + (1+B(\tau,0))^{(n/4+1/2)}\|\nabla u(\tau,\cdot)\|_{L^2}\\
    & \qquad +b(\tau)(1+B(\tau,0))^{n/4+1}\|u_t(\tau,\cdot)\|_{L^2} + (1+B(\tau,0))^{n/4}\|u(\tau,\cdot)\|_{L^2}.
\end{align*}
Thanks to Lemmas~\ref{Lem:Erefined} and~\ref{Lem:fundamental} we can estimate
\[ \sup_{[0,t]} W(\tau) \lesssim 
I_\alpha+ I_\alpha^{\frac{p+1}2} + \bigl( \sup_{\tau\in [0,t]} (1+B(\tau,0))^\varepsilon \|e^{\gamma\psi(\tau,\cdot)}u(\tau,\cdot)\|_{L^{p+1}}
\bigr)^{(p+1)/2}+\bigl(\sup_{\tau\in [0,t]} h(\tau) \|e^{\varepsilon\psi(\tau,\cdot)}u(\tau,\cdot)\|_{L^{2p}}\bigr)^p.
\]
In order to manage the last two terms we use a Gagliardo-Nirenberg type inequality (see Lemma~\ref{Lem:GNweight} in Appendix~\ref{sec:GN}) and we get
\begin{equation}\label{eq:GNweight}
\|e^{\sigma\psi(t,\cdot)}v\|_{L^q} \leq C_\sigma (1+B(t,0))^{(1-\theta(q))/2}\, \|\nabla v\|^{1-\sigma}_{L^2}\,\|e^{\psi(t,\cdot)}\nabla v\|^\sigma_{L^2}
\end{equation}
for any~$\sigma\in [0,1]$ and~$v\in H^1_{\sigma\psi(t,\cdot)}$, where
\begin{equation}\label{eq:thetaq}
\theta(q)\doteq \frac{n}2-\frac{n}q = n \Big(\frac12-\frac1q\Big)
\end{equation}
for~$q\geq2$, together with $q\leq2^*$ if~$n\geq3$, where~$2^*\doteq 2n/(n-2)=2p_\GN(n)$.
\\
By using~\eqref{eq:GNweight}, since~$\gamma=2/(p+1)+\varepsilon$, it follows
\begin{align}
\label{eq:GNp1}
\|e^{\gamma\psi(\tau,\cdot)}u(\tau,\cdot)\|_{L^{p+1}} & \leq W(\tau)\, (1+B(\tau,0))^{(1-\theta(p+1))/2-(1-2/(p+1)-\varepsilon)(n/4+1/2)},\\
\label{eq:GN2p}
\|e^{\varepsilon\psi(\tau,\cdot)}u(\tau,\cdot)\|_{L^{2p}} & \leq W(\tau)\, (1+B(\tau,0))^{((1-\theta(2p))/2-(1-\varepsilon)(n/4+1/2))}.
\end{align}
Recalling~\eqref{eq:hs}, we observe that the quantities
\begin{align}
\label{eq:W1}
\max_{\tau\in[0,t]} (1+B(\tau,0))^{\frac{1-\theta(p+1)}2 - \left(1-\frac2{p+1}-\varepsilon\right)\left(\frac{n}4+\frac12\right)+\varepsilon},\\
\label{eq:W2} \max_{\tau\in[0,t]} (1+B(\tau,0))^{\frac{n/4+1+\varepsilon}{p}+\frac{1-\theta(2p)}2-(1-\varepsilon)(\frac{n}4+\frac12)},
\end{align}
are uniformly bounded in~$[0,\infty)$, provided that~$\varepsilon>0$ is sufficiently small, since~$p>p_\Fuj(n)$. Indeed,
\[ \frac{1-\theta(p+1)}2 - \Big(1-\frac2{p+1}\Big)\Big(\frac{n}4+\frac12\Big) = \frac{n/4+1}{p}+\frac{1-\theta(2p)}2
-\Big(\frac{n}4+\frac12\Big) = \frac{1-(p-1)n/2}{p} <0. \]
Let us define
\[ M(t)\doteq\max_{[0,t]}W(\tau), \]
and let~$\epsilon=I_\alpha$. We remark that $M(0)=W(0)\leq (2+b(0))\epsilon$. We have proved that
\begin{equation}\label{eq:Mconcav}
M(t)\leq c_0(\epsilon + \epsilon^{p+1}) + c_1(M(t))^{\frac{p+1}2} + c_2(M(t))^p
\end{equation}
for some~$c_0,c_1,c_2>0$. We claim that there exists a constant~$\epsilon_0>0$ such that for any~$\epsilon\in(0,\epsilon_0]$ it holds
\begin{equation}
\label{eq:Mtbound} M(t)\leq C\epsilon,
\end{equation}
in particular $E(t)\le C^2 \epsilon^2$, uniformly with respect to~$t\in[0,\infty)$. Straightforward calculations (see~\cite{IT}) give also
\begin{equation}\label{eq:epsiu}
\|e^{\psi(t,\cdot)}u(t,\cdot) \|_{L^2} \lesssim \epsilon (1+t), \quad t\in [0,T).
\end{equation}
Thanks to~\eqref{eq:Mtbound} and~\eqref{eq:epsiu}, the global existence of the solution follows by contradiction with the condition~\eqref{eq:blowup} of
Lemma~\ref{Lem:localweight}. Let us prove our claim~\eqref{eq:Mtbound}. We define
\[ \phi(x) = x -c_1x^{\frac{p+1}2} -c_2 x^p \]
for some fixed constants~$c_1,c_2>0$. We notice that~$\phi(0)=0$
and~$\phi'(0)=1$. Moreover, $\phi(x)\leq x$ for any~$x\geq0$, and we
take~$\overline{x}>0$ such that~$\phi'(x)\geq 1/2$
on~$[0,\overline{x}]$. Therefore~$\phi$ is strictly increasing
and~$\phi(x)\leq x\leq2\phi(x)$ for any~$x\in[0,\overline{x}]$. Let
\[ \epsilon_0\doteq \min \left\{1, \frac{\overline{x}}{2+b(0)}, \frac{\overline{x}}{4c_0}\right\}. \]
If~$I_\alpha=\epsilon$ for some~$\epsilon\in(0,\epsilon_0]$, then
\begin{equation}\label{eq:M0x}
M(0)=W(0)\leq (2+b(0))\epsilon<\overline{x}.
\end{equation}
Since~$\phi(x)$ is strictly increasing on~$[0,\overline{x}]$ it
follows from~\eqref{eq:M0x} that
\begin{equation}\label{eq:phiM0}
\phi(M(0))\leq\phi(\overline{x}).
\end{equation}
Thanks to~\eqref{eq:Mconcav} we get
\begin{equation}\label{eq:Mt}
\phi(M(t))\leq c_0(\epsilon+\epsilon^p)\leq 2c_0\epsilon
\end{equation}
for any~$t\geq0$. Since~$M(t)$ is a continuous function and
\[ 2c_0\epsilon<2c_0\epsilon_0\leq \overline{x}/2\leq \phi(\overline{x}) \]
it follows from~\eqref{eq:phiM0} and~\eqref{eq:Mt}
that~$M(t)\in(0,\overline{x})$ for any~$t\geq0$. Therefore,
since~$x\leq2\phi(x)$ in~$[0,\overline{x}]$ we also derive
from~\eqref{eq:Mt} that
\[ M(t)\leq 2\phi(M(t))\leq 4c_0\epsilon.\]
This concludes the proof of~\eqref{eq:Mtbound} and as a consequence
the global existence result. The relation~\eqref{eq:Mtbound} implies
directly the decay estimates
\eqref{eq:decayu}-\eqref{eq:decayux}-\eqref{eq:decayut} for the
semi-linear problem~\eqref{eq:diss} (see Remark~\ref{Rem:bminus1B}).

%

\section{Proof of Theorem~\ref{Thm:low}}


In order to prove the global existence of a solution
in~$\mathcal{C}([0,\infty),H^1)\cap\mathcal{C}^1([0,\infty),L^2)$
such that the estimates
\eqref{eq:decayulow}-\eqref{eq:decayuxlow}-\eqref{eq:decayutlow} are
satisfied for any~$t\geq0$ we introduce the space
\[ X(t)=\left\{ u\in \mathcal{C}([0,t],H^1)\cap \mathcal{C}^1([0,t],L^2) \right\} \]
with the norm
\begin{align*}
\|u\|_{X(t)} \doteq \sup_{0\leq \tau\leq t} \bigl[
    & (1+B(\tau,0))^{n/4} \|u(\tau,\cdot)\|_{L^2} +(1+B(\tau,0))^{n/4+1/2}\|\nabla u(\tau,\cdot)\|_{L^2} \\
    & +(1+B(\tau,0))^{n/4}(1+\tau) \|u_t(\tau,\cdot)\|_{L^2} \bigr].
\end{align*}
We remark that if~$u\in X(t)$, then~$\|u\|_{X(s)}\leq\|u\|_{X(t)}$ for any~$s\leq t$. 
\\
We shall prove that for any data~$(u_0,u_1)\in \mathcal{A}_{1,1}$
the operator~$N$ which is defined by
\[ Nu(t,x) = E_0(t,0,x)\ast_{(x)} u_0(x) + E_1(t,0,x)\ast_{(x)} u_1(x) + \int_0^t E_1(t,s,x)\ast_{(x)} f(u(s,x))\,ds \]
satisfies the following two estimates:
\begin{align}
\label{eq:well}
\|Nu\|_{X(t)}
    & \leq C\,\|(u_0,u_1)\|_{\mathcal{A}_{1,1}} + C\|u\|_{X(t)}^p,  \\
\label{eq:contraction}
\|Nu-Nv\|_{X(t)}
    & \leq C\|u-v\|_{X(t)} \bigl(\|u\|_{X(t)}^{p-1}+\|v\|_{X(t)}^{p-1}\bigr)
\end{align}
uniformly with respect to~$t\in [0,\infty)$. Arguing as we did at the end of the proof of Theorem~\ref{Thm:main} from~\eqref{eq:well} it follows that~$N$ maps~$X(t)$ into itself for
small data. These estimates lead to the existence of a unique solution of $u=Nu$. In fact, taking the recurrence  
sequence $u_{-1} = 0,\  u_{j} = N(u_{j-1})$ for  $j=0,1,2,\dots $,
we apply~\eqref{eq:well} with
$\|(u_0,u_1)\|_{\mathcal{A}_{1,1}}=\epsilon$ and we see
inductively that
\begin{equation}\label{eq:recurrence}
\|u_j\|_{X(t)}\le C_1\epsilon,
\end{equation}
where $C_1=2C$ for any $\epsilon \in[0,\epsilon_0]$ with~$\epsilon_0= \epsilon_0(C_1)$ sufficiently small.
\\
Once the uniform estimate~\eqref{eq:recurrence} is checked we
use~\eqref{eq:contraction} once more and find
\begin{equation}\label{6.4.27}
\|u_{j+1}-u_{j}\|_{X(t)}\le C\epsilon^{p-1}
\|u_j-u_{j-1}\|_{X(t)} \leq 2^{-1}\|u_j-u_{j-1}\|_{X(t)}
\end{equation}
for $\epsilon\le\epsilon_0$ sufficiently small. From~\eqref{6.4.27} we get inductively $\|u_j-u_{j-1}\|_{X(t)}\le {C}{2^{-j}}$ so that $\{u_j \}$ is a Cauchy sequence in the Banach space $X(t)$ converging to the unique solution of $N(u)=u$. Since all of the constants are independent of $t$ we can take $t\to \infty$ and we gain the global existence result. Finally, we see that the definition of $\|u\|_{X(t)}$ leads to the decay estimates \eqref{eq:decayulow}-\eqref{eq:decayuxlow}-\eqref{eq:decayutlow}.
\\
Therefore, to complete the proof  it remains only to establish
\eqref{eq:well} and~\eqref{eq:contraction}. More precisely, we put
\begin{equation}\label{eq:M0}
\|v\|_{X_0(t)} \doteq \sup_{0\leq \tau\leq t} \left[
(1+B(\tau,0))^{n/4}
\|v(\tau,\cdot)\|_{L^2}+(1+B(\tau,0))^{n/4+1/2}\|\nabla
v(\tau,\cdot)\|_{L^2} \right],
\end{equation}
and we prove two slightly stronger inequalities than~\eqref{eq:well}
and~\eqref{eq:contraction}, namely,
\begin{align}
\label{eq:well0}
\|Nu\|_{X(t)}
    &\leq C\,\|(u_0,u_1)\|_{\mathcal{A}_{1,1}} + C\|u\|_{X_0(t)}^p,\\
\label{eq:contraction0}
\|Nu-Nv\|_{X(t)}
    & \leq C\|u-v\|_{X_0(t)} \bigl(\|u\|_{X_0(t)}^{p-1}+\|v\|_{X_0(t)}^{p-1}\bigr).
\end{align}
These conditions will follow from the next proposition in which the restriction on the power $p$ and on the dimension $n$ will appear.
\begin{Prop}\label{Prop:Nestimate}
Let us assume \eqref{eq:admplow}. Let~$(u_0,u_1)\in
\mathcal{A}_{1,1}$ and~$u\in X(t)$. For~$j+l=0,1$ it holds:
\begin{align}
\label{eq:NPwell}
&(1+t)^l(1+B(t,0))^{(n/4+j/2)} \|\nabla^j\partial_t^l Nu(t,\cdot)\|_{L^2}
\leq C\,\|(u_0,u_1)\|_{\mathcal{A}_{1,1}} + C \|u\|_{X_0(t)}^p, \\
\nonumber
&(1+t)^l(1+B(t,0))^{(n/4+j/2)} \|\nabla^j\partial_t^l \bigl(Nu(t,\cdot)-Nv(t,\cdot)\bigr) \|_{L^2} \\
\label{eq:NPcontr}
    &\hspace{7cm} \leq C \|u-v\|_{X_0(t)} \bigl(\|u\|_{X_0(t)}^{p-1}+\|v\|_{X_0(t)}^{p-1}\bigr).
\end{align}
\end{Prop}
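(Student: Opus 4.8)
The plan is to estimate the three pieces of $Nu$ --- the two linear contributions coming from $E_0(t,0,x)\ast u_0$ and $E_1(t,0,x)\ast u_1$, and the Duhamel integral $\int_0^t E_1(t,s,x)\ast f(u(s,x))\,ds$ --- separately, exactly in the spirit of Lemma~\ref{Lem:fundamental}, but now working in the unweighted framework dictated by the $\mathcal A_{1,1}$ data and the norm $\|\cdot\|_{X_0(t)}$. For the linear part I would apply Wirth's estimates (Theorem~A via Theorem~\ref{Thm:linmain} with $s=0$) for $m=1$: this gives directly
\[
\|\nabla^j\partial_t^l u^\lin(t,\cdot)\|_{L^2}\le C\,(b(t))^{-l}(1+B(t,0))^{-(n/4+j/2)-l}\|(u_0,u_1)\|_{\mathcal A_{1,1}},
\]
and Remark~\ref{Rem:bminus1B} converts the factor $(b(t))^{-1}(1+B(t,0))^{-1}$ in the $l=1$ case into $(1+t)^{-1}(1+B(t,0))^{n/4}$-compatible decay, yielding the $\|(u_0,u_1)\|_{\mathcal A_{1,1}}$ term on the right of~\eqref{eq:NPwell}.

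For the nonlinear term I would split $[0,t]$ into $[0,t/2]$ and $[t/2,t]$. On $[0,t/2]$ I use \eqref{eq:Btssmall} ($B(t,s)\approx B(t,0)$) and Theorem~\ref{Thm:linmain} with $m=1$, estimating $\|f(u(s,\cdot))\|_{L^1\cap L^2}\lesssim \|u(s,\cdot)\|_{L^p}^p+\|u(s,\cdot)\|_{L^{2p}}^p$ and controlling $\|u(s,\cdot)\|_{L^p}$, $\|u(s,\cdot)\|_{L^{2p}}$ by Gagliardo--Nirenberg interpolation between $\|u(s,\cdot)\|_{L^2}$ and $\|\nabla u(s,\cdot)\|_{L^2}$, both of which carry $\|u\|_{X_0(t)}$ times a power of $(1+B(s,0))$. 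Pulling the supremum out, the surviving $s$-integral is $\int_0^{t/2} b(s)^{-1}(1+B(s,0))^{-1-\delta}\,ds$, which after the change of variables $r=B(s,0)$ is bounded uniformly; the condition $p>p_\Fuj(n)$ (and the Sobolev restriction $p\le p_\GN(n)$ for $n\ge3$) is exactly what makes the total power of $(1+B(t,0))$ come out $\le -(n/4+j/2)-l$. On $[t/2,t]$ I instead use the $L^2$--$L^2$ energy estimates (Theorem~\ref{Thm:linmain} with $m=2$, where $\|f(u(s,\cdot))\|_{L^2}\lesssim \|u(s,\cdot)\|_{L^{2p}}^p$), together with $B(s,0)\approx B(t,0)$ from~\eqref{eq:Bs0large} and $B(t,s)\approx (t-s)/b(t)$ from~\eqref{eq:Btslarge}; the resulting integrals $\int_{t/2}^t b(s)^{-1}(1+B(t,s))^{-j/2-l}\,ds$ are computed as in \eqref{eq:intlarge1}--\eqref{eq:intlargel}, producing at worst a logarithmic loss that is absorbed by the gain $(h(t/2))^{-p}$ coming from the decay of $\|u\|_{X_0}$-norms at time $s\approx t$. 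Here the explicit admissible ranges in~\eqref{eq:admplow} ($2\le p\le 3$ for $n=3$, $p=2$ for $n=4$) are what guarantee the exponents line up and that the needed embeddings $H^1\hookrightarrow L^{2p}$ hold.

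The contraction estimate~\eqref{eq:NPcontr} is obtained by the same two-region splitting applied to $f(u(s,\cdot))-f(v(s,\cdot))$, using the Lipschitz-type bound~\eqref{eq:disscontr}:
\[
|f(u)-f(v)|\lesssim |u-v|\,(|u|+|v|)^{p-1},
\]
so that by H\"older's inequality $\|f(u(s,\cdot))-f(v(s,\cdot))\|_{L^q}\lesssim \|u(s,\cdot)-v(s,\cdot)\|_{L^{r}}(\|u(s,\cdot)\|_{L^{r'}}+\|v(s,\cdot)\|_{L^{r'}})^{p-1}$ for suitable exponents, and each factor is again interpolated via Gagliardo--Nirenberg and bounded by the $X_0$-norms; the remaining $s$-integrals are literally the ones already estimated for~\eqref{eq:NPwell}. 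I expect the main obstacle to be the bookkeeping of exponents on the region $[t/2,t]$: one must check that, with the constrained $(n,p)$ of~\eqref{eq:admplow}, the combination $(h(t/2))^{-p}(b(t))^{-l}(1+B(t,0))^{1-j/2-l}(\log(1+B(t,0)))^l$ is indeed $\lesssim (1+t)^{-l}(1+B(t,0))^{-(n/4+j/2)}$ after using $b(t)(1+B(t,0))\approx 1+t$ from Remark~\ref{Rem:bminus1B} --- in particular that the logarithm and the fact that $X_0$ does \emph{not} control the weighted energy (unlike $W(\tau)$ in Theorem~\ref{Thm:main}) do not cost too much. This is why the dimension is capped at $n\le 4$ and the exponent pinned near $p_\GN(n)$ for $n=3,4$.
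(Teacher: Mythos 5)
Your proposal follows essentially the same route as the paper's own proof: linear part via Wirth's estimates with $m=1$, Duhamel term split at $t/2$ with $L^1\cap L^2\to L^2$ (Matsumura-type, $m=1$) estimates on $[0,t/2]$ and $L^2$--$L^2$ energy estimates on $[t/2,t]$, Gagliardo--Nirenberg interpolation of the $L^p$ and $L^{2p}$ norms against the $X_0$-norm (which is exactly where the restrictions $p\ge2$ and $p\le p_\GN(n)$ of \eqref{eq:admplow} enter), convergence of the $s$-integrals from $p>p_\Fuj(n)$, and the same scheme with the Lipschitz bound \eqref{eq:disscontr} and H\"older for the difference estimate. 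The only cosmetic discrepancy is your reuse of the weight $h(t/2)^{-p}$ from Lemma~\ref{Lem:fundamental}; in this unweighted setting the gain on $[t/2,t]$ is instead the factor $(1+B(s,0))^{-(2p-1)n/4}\approx(1+B(t,0))^{-(2p-1)n/4}$ supplied directly by $\|u\|_{X_0}$, but the resulting exponent check is identical.
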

%
%
\begin{proof}
We first prove~\eqref{eq:NPwell}. As in the proof of
Theorem~\ref{Thm:main}  we use two different strategies for~$s\in
[0,t/2]$ and~$s\in [t/2,t]$ to control the integral term in~$Nu$. In
particular, we use Matsumura's type estimate
\eqref{eq:MWDu}-\eqref{eq:MWDux}-\eqref{eq:MWDut} for~$m=1$ if~$s\in
[0,t/2]$ and for~$m=2$ (i.e. energy estimates) if~$s\in [t/2,t]$.
Together with \eqref{eq:MWu}-\eqref{eq:MWux}-\eqref{eq:MWut} and Remark~\ref{Rem:bminus1B} we get
\begin{align}
\nonumber
\|\nabla^j\partial_t^l Nu(t,\cdot)\|_{L^2}
    & \leq C
(1+t)^{-l}(1+B(t,0))^{-(n/4+j/2)}\|(u_0,u_1)\|_{\mathcal{A}_{1,1}}\\
\nonumber
    & \quad + C\int_0^{t/2}(b(s))^{-1}(b(t))^{-l}(1+B(t,s))^{-(n/4+j/2+l)}\|f(u(s,\cdot))\|_{L^1\cap
L^2}ds \\
\label{eq:Nest}
    & \quad + C\int_{t/2}^t (b(s))^{-1}(b(t))^{-l}(1+B(t,s))^{-j/2-l}\|f(u(s,\cdot))\|_{L^2}ds
\end{align}
for $j+l=0,1$. By~\eqref{eq:disscontr} we can estimate~$|f(u)|\lesssim |u|^p$, so that
\[ \|f(u(s,\cdot))\|_{L^1\cap L^2}\lesssim \|u(s,\cdot)\|_{L^p}^p+ \|u(s,\cdot)\|_{L^{2p}}^p, \]
and, analogously,
\[ \|f(u(s,\cdot))\|_{L^2}\lesssim \|u(s,\cdot)\|_{L^{2p}}^p. \]
We apply Gagliardo-Nirenberg inequality (see Remark~\ref{Rem:GN0122}
in Appendix~\ref{sec:GN}):
\begin{align}
\label{eq:GNlowp}
\|u(s,\cdot)\|_{L^p}^p
    & \lesssim \|u(s,\cdot)\|_{L^2}^{p(1-\theta(p))}\,\|\nabla u(s,\cdot)\|_{L^2}^{p\theta(p)},\\
\label{eq:GNlow2p}
\|u(s,\cdot)\|_{L^{2p}}^p
    & \lesssim \|u(s,\cdot)\|_{L^2}^{p(1-\theta(2p))}\,\|\nabla u(s,\cdot)\|_{L^2}^{p\theta(2p)},
\end{align}
where 
\[ \theta(p)=\frac{n}2\,\frac{p-2}{p}, \qquad \theta(2p)=\frac{n}2\,\frac{p-1}p. \]
We remark that the requisite~$\theta(p)\geq0$ implies that~$p\geq2$,
whereas the requisite~$\theta(2p)\leq1$ implies that~$p\leq
p_\GN(n)$ if~$n\geq3$. The main difference with respect to the proof
of Theorem~\ref{Thm:main} is that to apply Gagliardo-Nirenberg
inequality we need $p\geq2$, since we use the $L^p\cap L^{2p}$ norm
of~$u$ and not its~$L^{p+1}$ norm.
\\
We estimate $\|f(u(s,\cdot))\|_{L^1\cap L^2}$ and $\|f(u(s,\cdot))\|_{L^2}$ by using \eqref{eq:GNlowp}, \eqref{eq:GNlow2p} and $\|u\|_{X_0(t)}$:
\begin{align}
\label{eq:fusGNlowp}
\|f(u(s,\cdot))\|_{L^1\cap L^2}
    & \lesssim \|u\|_{X_0(s)}^p (1+B(s,0))^{-p(n/4+\theta(p)/2)}= \|u\|_{X_0(s)}^p (1+B(s,0))^{-(p-1)n/2},
\intertext{since~$\theta(p)<\theta(2p)$, whereas}
\label{eq:fusGNlow2p}
\|f(u(s,\cdot))\|_{L^2}
    & \lesssim \|u\|_{X_0(s)}^p (1+B(s,0))^{-p(n/4+\theta(2p)/2)}= \|u\|_{X_0(s)}^p (1+B(s,0))^{-(2p-1)n/4}.
\end{align}
Summarizing we find
\begin{align*}
\|\nabla^j\partial_l Nu(t,\cdot)\|_{L^2}
    & \leq C (1+t)^{-l}(1+B(t,0))^{-(n/4+j/2)} \epsilon \\
    & \quad + C\|u\|_{X_0(t)}^p \int_0^{t/2} (b(s))^{-1}(b(t))^{-l} (1+B(t,s))^{-(n/4+j/2+l)}(1+B(s,0))^{-(p-1)n/2}\, ds \\
    & \quad + C\|u\|_{X_0(t)}^p \int_{t/2}^t (b(s))^{-1}(b(t))^{-l} (1+B(t,s))^{-j/2-l}(1+B(s,0))^{-(2p-1)n/4}\, ds
\end{align*}
for~$j+l=0,1$. First, let~$s\in [0,t/2]$. Due to~\eqref{eq:Btssmall}
and~\eqref{eq:Btsbehav2} we can estimate
\[\int_0^{t/2} (b(s))^{-1}(b(t))^{-l}(1+B(t,s))^{-(n/4+j/2+l)}(1+B(s,0))^{-(p-1)n/2} ds \lesssim (1+B(t,0))^{-(n/4+j/2)}(1+t)^{-l}.\]
%
%
Indeed, since~$p>p_\Fuj(n)$ after the change of variables~$r=B(s,0)$
we get
\[ \int_0^{t/2} \frac1{b(s)} (1+B(s,0))^{-(p-1)n/2} \, ds = \int_0^{B(t/2,0)} (1+r)^{-(p-1)n/2} \,dr \leq C. \]
Analogously, for~$s\in [t/2,t]$ by using \eqref{eq:Bs0large} we have
\begin{multline*}
\int_{t/2}^t \frac1{b(s)} \frac1{(b(t))^{l}} (1+B(t,s))^{-j/2-l}(1+B(t,0))^{-(2p-1)n/4} ds \\
\leq C (1+B(t,0))^{-(2p-1)n/4} \frac1{(b(t))^l} \int_{t/2}^t
\frac1{b(s)} (1+B(t,s))^{-j/2-l}ds.
\end{multline*}
Thanks to \eqref{eq:intlarge1}-\eqref{eq:intlargej}-\eqref{eq:intlargel} in the proof of Theorem~\ref{Thm:main} we get
\begin{multline*}
\frac1{(b(t))^l} (1+B(t,0))^{-(2p-1)n/4} \int_{t/2}^t \frac1{b(s)} (1+B(t,s))^{-j/2-l}ds \\
\leq C (1+B(t,0))^{-(2p-1)n/4 +1-j/2-l}
(b(t))^{-l}(\log(1+B(t,0)))^l \lesssim
(1+B(t,0))^{-n/4-j/2}(1+t)^{-l}.
\end{multline*}
By using 
Remark~\ref{Rem:bminus1B} we prove~\eqref{eq:NPwell} once we get
\[ (1+B(t,0))^{1-(p-1)n/2} (\log(1+B(t,0)))^l \leq C\,,  \qquad l=0,1\, \]
as follows being~$p>p_\Fuj(n)$.
\\
Now we prove~\eqref{eq:NPcontr}. We remark that
\[ \|Nu-Nv\|_{X(t)} = \left\| \int_0^t E_1(t,s,x)\ast_{(x)} (f(u(s,x))-f(v(s,x))) \,ds \right\|_{X(t)}. \]
Thanks to \eqref{eq:MWDu}-\eqref{eq:MWDux}-\eqref{eq:MWDut} we can estimate
\begin{multline*}
\| \nabla^j \partial_t^l E_1(t,s,x) \ast_{(x)} (f(u(s,x))-f(v(s,x))) \|_{L^2} \\
\lesssim \begin{cases}
(b(s))^{-1}(b(t))^{-l}(1+B(t,s))^{-\frac{j}2-l-\frac{n}4} \|f(u(s,\cdot))-f(v(s,\cdot)) \|_{L^1\cap L^2}, & s\in [0,t/2], \\
(b(s))^{-1}(b(t))^{-l}(1+B(t,s))^{-\frac{j}2-l}
\|f(u(s,\cdot))-f(v(s,\cdot)) \|_{L^2}, & s\in [t/2,t],
\end{cases}
\end{multline*}
for~$j+l=0,1$. By using~\eqref{eq:disscontr} and H\"older's inequality we can now
estimate
\begin{align*}
\|f(u(s,\cdot))-f(v(s,\cdot)) \|_{L^1} & \lesssim \|u(s,\cdot)-v(s,\cdot)\|_{L^p} \, \left(\|u(s,\cdot)\|_{L^p}^{p-1}+ \|v(s,\cdot)\|_{L^p}^{p-1}\right),\\
\|f(u(s,\cdot))-f(v(s,\cdot)) \|_{L^2} & \lesssim
\|u(s,\cdot)-v(s,\cdot)\|_{L^{2p}} \,
\left(\|u(s,\cdot)\|_{L^{2p}}^{p-1}+
\|v(s,\cdot)\|_{L^{2p}}^{p-1}\right).
\end{align*}
Analogously to the proof of~\eqref{eq:well0} we apply
Gagliardo-Nirenberg inequality to the terms
\[ \|u(s,\cdot)-v(s,\cdot)\|_{L^q}, \qquad \|u(s,\cdot)\|_{L^q}\,, \quad \|v(s,\cdot)\|_{L^q}, \]
with~$q=p$ and~$q=2p$, and we conclude the proof of~\eqref{eq:contraction0} by using the assumption $p>p_{\Fuj}(n)$ and the convergence of the integrals in
\eqref{eq:intlarge1}-\eqref{eq:intlargej}-\eqref{eq:intlargel}.
\end{proof}

%

\section{Proof of Theorem~\ref{Thm:linmain}}

In order to prove Theorem~\ref{Thm:linmain} we follow the strategy in~\cite{W07}. The main goal is to show how the strategy can be extended to a parameter-dependent family of Cauchy problems. For additional details we refer the reader to that paper.

We will prove a statement more general than
\eqref{eq:MWDu}-\eqref{eq:MWDux}-\eqref{eq:MWDut}, namely, that
\begin{eqnarray}\label{eq:Matsugeneral}
&&\|\partial_t^l\partial_x^\alpha v(t,\cdot) \|_{L^2}\\
 &&\quad \le C (b(s))^{-1}(1+B(t,s))^{-\frac{|\alpha|}2-\frac{n}2\left(\frac1m-\frac12\right)}(b(t))^{-l} (1+B(t,s))^{-l}
 \|g(s,\cdot)\|_{L^m\cap H^{[|\alpha|+l-1]^+}},\nonumber
\end{eqnarray}
for~$l=0,1$ and for any~$\alpha\in \N^n$. The inequality~\eqref{eq:Matsugeneral} for~$|\alpha|\leq 1-l$ gives us \eqref{eq:MWDu}-\eqref{eq:MWDux}-\eqref{eq:MWDut}.
\\
We perform the Fourier transform of~\eqref{eq:CPD} and we make the change of variables
\begin{equation}\label{eq:ychange}
y(t,\xi)\doteq \frac{\lambda(t)}{\lambda(s)} \widehat{v}(t,\xi)\,, \qquad \text{where} \quad \lambda(t)\doteq \exp\left(\frac12\int_0^tb(\tau)\,d\tau\right)\,,
\end{equation}
so that we derive the Cauchy problem
\begin{equation}\label{eq:CPv}
y''+m(t,\xi)y=0\, \qquad y(s,\xi)=0, \quad y'(s,\xi)=\widehat{\dat}(s,\xi),
\end{equation}
where we put
\[ m(t,\xi)\doteq \xii^2-\left(\frac14b^2(t)+\frac12b'(t)\right). \]
Let us define~$\bs(t)\doteq b(t)/2$ and
\[ \<\xi\>_{\bs(t)}\doteq \sqrt{\abs{\xii^2-\bs^2(t)}}.\]
We divide the extended phase space $[s,\infty)\times\Rn$ into four zones. We define the following \emph{hyperbolic}, \emph{pseudo-differential}, \emph{reduced} and \emph{elliptic} zones in correspondence of sufficiently small~$\ee>0$ and sufficiently large~$N>0$:
\begin{align*}
Z_\hyp(N)
    & =\Big\{ t\ge s,\; \xii\geq \bs(t), \quad \frac{\<\xi\>_{\bs(t)}}{\bs(t)}\geq N \Big\},\\
Z_\pd(N,\ee)
    & =\Big\{ t\ge s,\; \xii\geq \bs(t), \quad \ee \leq \frac{\<\xi\>_{\bs(t)}}{\bs(t)}\leq N \Big\},\\
Z_\red(\ee)
    & =\Big\{ t\ge s,\; \frac{\<\xi\>_{\bs(t)}}{\bs(t)}\leq \ee \Big\},\\
Z_\elli(\ee)
    & =\Big\{ t\ge s,\; \xii\leq \bs(t), \quad \frac{\<\xi\>_{\bs(t)}}{\bs(t)}\geq \ee \Big\}.
\end{align*}
\begin{Rem}\label{Rem:bs}
Since~$\bs(t)$ is monotone there exists the limit
\[ \bs_\infty \doteq \lim_{t\to\infty} \bs(t) \in [0,\infty].\]
We distinguish the following four cases:
\begin{itemize}
\item If~$\bs(t)\searrow0$, then for any~$\xi\neq0$ there exists~$T_\xii\geq s$ such that~$(t,\xi)\in Z_\hyp(N)$ for any~$t\geq T_\xii$.
\item If~$\bs(t)\searrow \bs_\infty>0$, then for any~$\xii>\bs_\infty\sqrt{N^2+1}$ there exists~$T_\xii\geq s$
such that~$(t,\xi)\in Z_\hyp(N)$ for any~$t\geq T_\xii$. Moreover, $(t,\xi)\in Z_\elli(\ee)$ for any~$\xii\leq
\bs_\infty\sqrt{1-\ee^2}$ and~$(t,\xi)\in Z_\hyp(N)$ for any $\xii\geq\bs(s)\sqrt{N^2+1}$.
\item If~$\bs(t)\nearrow \bs_\infty>0$, then for any~$\xii<\bs_\infty\sqrt{1-\ee^2}$ there exists~$T_\xii\geq s$ such that~$(t,\xi)\in Z_\elli(N)$
for any~$t\geq T_\xii$. Moreover, $(t,\xi)\in Z_\elli(\ee)$ for any~$\xii\leq \bs(s)\sqrt{1-\ee^2}$ and~$(t,\xi)\in Z_\hyp(N)$ for any $\xii\geq \bs_\infty\sqrt{N^2+1}$.
\item If~$\bs(t)\nearrow \infty$, then for any~$\xi\in\Rn$ there exists~$T_\xii\geq s$ such that~$(t,\xi)\in Z_\elli(N)$ for any~$t\geq T_\xii$.
\end{itemize}
\end{Rem}
We define
\[ h(t,\xi)=\chi \bigg(\frac{\<\xi\>_{\bs(t)}}{\ee\bs(t)}\bigg) \ee \bs(t) + \bigg(1-\chi \bigg(\frac{\<\xi\>_{\bs(t)}}{\ee\bs(t)}\bigg)\bigg) \sqrt{\abs{m(t,\xi)}}, \]
where~$\chi\in\mathcal{C}^\infty[0,+\infty)$ localizes: $\chi(\zeta)=1$ if
$0\leq\zeta\leq1/2$ and $\chi(\zeta)=0$ if~$\zeta\geq1$. For any~$(t,\xi)\not\in Z_\red(\ee)$ it holds~$|m(t,\xi)|\geq C\ee^2\bs^2(t)$. Therefore,
$h(t,\xi)\geq C_1\ee\bs(t)$.

Let~$V(t,\xi)=(ih(t,\xi)y(t,\xi),y'(t,\xi))^T$. From~\eqref{eq:CPv} we obtain
\begin{equation}\label{eq:CPV}
V' = \begin{pmatrix}
h'(t,\xi)/h(t,\xi) & ih(t,\xi) \\
im(t,\xi)/h(t,\xi) & 0
\end{pmatrix} V, \qquad V(s,\xi) = (0,\widehat{\dat}(s,\xi))^T.
\end{equation}
For any $t\ge t_1\ge s$ we denote by $\mathcal E(t,t_1,\xi)$ the fundamental solution of~\eqref{eq:CPV}, that is, the matrix which solves
\begin{equation}\label{eq:CPV-E}
\partial_t \mathcal E(t,t_1,\xi) =
\begin{pmatrix}
h'(t,\xi)/h(t,\xi) & ih(t,\xi) \\
im(t,\xi)/h(t,\xi) & 0
\end{pmatrix}  \mathcal E(t,t_1,\xi)\,, \qquad \mathcal E(t_1,t_1,\xi) = I
\end{equation}
for any $t\ge t_1$. It is clear that $V(t,\xi)=\mathcal E(t,s,\xi)(0,\widehat{\dat}(s,\xi))^T$ and that~$\mathcal E(t,t_2,\xi)=\mathcal E(t,t_2,\xi)\,\mathcal E(t_2,t_1,\xi)$, for any~$t\geq t_2\geq t_1\geq s$.
\\
For~$t_2\ge t_1$ and $(t_2,\xi), (t_1,\xi)\in Z_{hyp}(N,\ee)$, we will write $\mathcal E(t_2,t_1,\xi)=\mathcal E_{hyp}(t_2,t_1,\xi)$. Similarly for the other zones.


\subsection{Diagonalization in the hyperbolic zone}

Recalling the definition of~$\chi$, in~$Z_\hyp(N)$ it holds $h(t,\xi)=\sqrt{m(t,\xi)}$. Therefore we can write the system in~\eqref{eq:CPV} as
\begin{equation}\label{eq:Vhyp} \partial_t V = i\sqrt{m(t,\xi)}\begin{pmatrix}
0 & 1 \\
1 & 0
\end{pmatrix} V + \frac{\partial_t\sqrt{m(t,\xi)}}{\sqrt{m(t,\xi)}} \begin{pmatrix}1&0\\0&0\end{pmatrix} V.
\end{equation}
The constant matrix
\[ P=\frac1{\sqrt{2}}\begin{pmatrix}
1 & 1 \\
-1 & 1
\end{pmatrix}\,, 
\]
is the diagonalizer of the principal part of~\eqref{eq:Vhyp}, that is,
\[ P\begin{pmatrix}0&1\\1&0\end{pmatrix}P^{-1}= \frac1{\sqrt{2}}\begin{pmatrix}1&1\\-1&1\end{pmatrix}P^{-1}=\begin{pmatrix}1&0\\0&-1\end{pmatrix}\,. \]
If we put~$W(t,\xi)=PV(t,\xi)$, then~\eqref{eq:Vhyp} becomes
\begin{equation}\label{eq:Vhyp1} \partial_t W = i\sqrt{m(t,\xi)}\begin{pmatrix}
1 & 0\\
0 & -1
\end{pmatrix} W + \frac{\partial_t\sqrt{m(t,\xi)}}{2\sqrt{m(t,\xi)}} \begin{pmatrix}1&-1\\-1&1\end{pmatrix} W.
\end{equation}
Then we apply a step of refined diagonalization to~\eqref{eq:Vhyp1}.
The second diagonalizer depends on~$\sqrt{m(t,\xi)}$ and
on~$\partial_t\sqrt{m(t,\xi)}$. For this reason there will appear
terms where also~$\partial_t^2\sqrt{m(t,\xi)}$ comes into play. By
using~\eqref{eq:oscb} for~$k=1,2,3$ (we recall that both~$b(t)$ and~$b'(t)$ appear
in the definition of~$m(t,\xi)$) we derive suitable estimates for
the entries of the new system.
\\
Summarizing, for any $(t_1,\xi), (t_2,\xi)\in Z_\hyp(N)$ with $t_1\leq t_2$, the fundamental solution in~\eqref{eq:CPV-E} can be written as
\[ \mathcal{E}_\hyp(t_2,t_1,\xi) = \widetilde{\mathcal{E}}_{\hyp,0}(t_2,t_1,\xi)Q_\hyp(t_2,t_1,\xi), \]
where
\[ \widetilde{\mathcal{E}}_{\hyp,0}(t_2,t_1,\xi)=\diag \left(\exp \bigl(-i\int_{t_1}^{t_2} \sqrt{m(\tau,\xi)}\,d\tau \bigr), \exp \bigl(i\int_{t_1}^{t_2} \sqrt{m(\tau,\xi)}\,d\tau \bigr)\right), \]
and~$\|Q_\hyp(t_2,t_1,\xi)\|\leq C$, uniformly. We remark that in the last estimate we used the property
\[ m(t_2,\xi)\approx \xii \approx m(t_1,\xi)\,, \]
which holds in~$Z_\hyp(N)$, to control the term
\[ \exp \left( \frac12 \int_{t_1}^{t_2} \frac{\partial_\tau\sqrt{m(\tau,\xi)}}{\sqrt{m(\tau,\xi)}} d\tau \right) = \left( \frac{m(t_2,\xi)}{m(t_1,\xi)} \right)^{1/4}, \]
which appears after the refined diagonalization step.


\subsection{Diagonalization in the elliptic zone}

In~$Z_\elli(\ee)$ it holds $h(t,\xi)=\sqrt{-m(t,\xi)}$, therefore we can write the system in~\eqref{eq:CPV} as
\begin{equation}\label{eq:Velli} \partial_t V = i\sqrt{-m(t,\xi)}\begin{pmatrix}
0 & 1 \\
-1 & 0
\end{pmatrix} V + \frac{\partial_t\sqrt{-m(t,\xi)}}{\sqrt{-m(t,\xi)}} \begin{pmatrix}1&0\\0&0\end{pmatrix} V.
\end{equation}
The constant matrix
\[ \widetilde{P}=\frac1{\sqrt{2}}\begin{pmatrix}
-i & 1 \\
i & 1
\end{pmatrix}\,, 
\]
is the diagonalizer of the principal part of~\eqref{eq:Velli}. If we
put~$W(t,\xi)=\widetilde{P}V(t,\xi)$, then~\eqref{eq:Velli} becomes
\begin{equation}\label{eq:Velli1} \partial_t W = \sqrt{-m(t,\xi)}\begin{pmatrix}
1 & 0\\
0 & -1
\end{pmatrix} W + \frac{\partial_t\sqrt{-m(t,\xi)}}{2\sqrt{-m(t,\xi)}} \begin{pmatrix}1&-1\\-1&1\end{pmatrix} W.
\end{equation}
If~$t_1\geq \overline{t}$ with a sufficiently
large~$\overline{t}\geq s$, then we can perform a step of
\emph{refined} diagonalization. On the other hand, since the subzone
\[ Z_\comp(\ee,\overline{t})= \{t\leq \overline{t}\} \cap Z_\elli(\ee) \subset [s,\overline{t}]\times
\left\{\xii\leq \max\{\bs(s),\bs(\overline{t})\} \right\} \,, \]
is compact, the fundamental solution is bounded there. So we may
assume~$t_1\geq \overline{t}$. For any $(t_1,\xi), (t_2,\xi)\in
Z_\elli(\ee)$ with $t_1\leq t_2$ the fundamental solution
in~\eqref{eq:CPV-E} can be written as
\[ \mathcal{E}_\elli(t_2,t_1,\xi) = \widetilde{\mathcal{E}}_{\elli,0}(t_2,t_1,\xi)Q_\elli(t_2,t_1,\xi), \]
where
\[ \widetilde{\mathcal{E}}_{\elli,0}(t_2,t_1,\xi)=\left(\frac{m(t_2,\xi)}{m(t_1,\xi)}\right)^{1/4}\diag \left(\exp \bigl(\int_{t_1}^{t_2}
\sqrt{-m(\tau,\xi)}\,d\tau \bigr), \exp \bigl(-\int_{t_1}^{t_2}
\sqrt{-m(\tau,\xi)}\,d\tau \bigr)\right) \]
and~$\|Q_\elli(t_2,t_1,\xi)\|\leq C$ uniformly. We remark that the term
\[ \exp \left( \frac12 \int_{t_1}^{t_2} \frac{\partial_\tau\sqrt{m(\tau,\xi)}}{\sqrt{m(\tau,\xi)}} d\tau \right) = \left( \frac{m(t_2,\xi)}{m(t_1,\xi)} \right)^{1/4} \,, \]
which appears in~$\widetilde{\mathcal{E}}_{\elli,0}(t_2,t_1,\xi)$ is
not bounded. Consequently, it can not be included
in~$Q_\elli(t_2,t_1,\xi)$ as we did during the diagonalization
procedure in~$Z_\hyp(N)$.


\subsection{Estimates in the reduced and pseudo-differential zones}

In~$Z_\red(\ee)$ we can estimate~$\sqrt{\abs{m(t,\xi)}}\leq
C\ee\bs(t)$ and therefore also $h(t,\xi)\leq C\ee\bs(t)$.
By rough estimates
this implies 
\[ \|\mathcal{E}_\red(t_2,t_1,\xi)\|\leq \exp\left( C\ee\int_{t_1}^{t_2}\bs(\tau)\,d\tau\right).\]
Since~$C$ is independent of~$\ee$ we can take~$\ee<1/(2C)$,
so that the exponential growth is slower than the growth of $\lambda(t_2)/\lambda(t_1)$.
\\
In~$Z_\pd(N,\ee)$ it holds~$h(t,\xi)=\sqrt{m(t,\xi)}$. We can roughly estimate by the symbol class of $\partial_t\sqrt{m(t,\xi)}/\sqrt{m(t,\xi)}$:
\[ \|\mathcal{E}_\pd(t_2,t_1,\xi)\|\leq\exp\left(c\int_{t_1}^{t_2} (1+\tau)^{-1}\,d\tau\right) = \left(\frac{1+t_2}{1+t_1}\right)^c \leq C_\ee' \exp\left( C\ee\int_{t_1}^{t_2}\bs(\tau)\,d\tau\right)\,, \]
for any~$\ee>0$, since~$t\bs(t)\to\infty$.


\subsection{Representation of the solution}

We come back to our original problem~\eqref{eq:CPD}. Let
\[ y(t,s,\xi)=\Psi(t,s,\xi)\widehat{\dat}(s,\xi) \]
be the solution to~\eqref{eq:CPv}. Then, thanks to our
representation for the fundamental
solution~$\mathcal{E}(t_2,t_1,\xi)$ given in~\eqref{eq:CPV-E}, we derive
\[ \begin{pmatrix}
0&  i\xii \Psi \\
0 & \Psi'
\end{pmatrix} (0,\widehat{\dat}(s,\xi))^{T} = \diag (\xii/h(t,\xi), 1) \mathcal{E}(t,s,\xi) \diag (0,1) (0,\widehat{\dat}(s,\xi))^{T}, \]
that is,
\[ \Psi(t,s,\xi) = -i \mathcal{E}_{12}(t,s,\xi) / h(t,\xi), \qquad \Psi'(t,s,\xi) = \mathcal{E}_{22}(t,s,\xi). \]
We write the Fourier transform of the solution to~\eqref{eq:CPD} as $\widehat{v}(t,\xi)=\widehat{\Phi}(t,s,\xi)\widehat{\dat}(s,\xi)$. Recalling~\eqref{eq:ychange}, we obtain
\begin{align}
\label{eq:Phirep}
\widehat{\Phi}(t,s,\xi)
    & = \frac{\lambda(s)}{\lambda(t)} \Psi(t,s,\xi) = -i \frac{\lambda(s)}{\lambda(t)} \frac1{h(t,\xi)} \mathcal{E}_{12}(t,s,\xi), \\
\nonumber
\widehat{\Phi}'(t,s,\xi)
    & = \frac{\lambda(s)}{\lambda(t)} \left(\Psi'(t,s,\xi)-\frac12b(t)\Psi(t,s,\xi) \right) \\
\label{eq:Phi1rep}
    & = \frac{\lambda(s)}{\lambda(t)} \left( \mathcal{E}_{22}(t,s,\xi) + \frac{i b(t)}{2h(t,\xi)} \mathcal{E}_{12}(t,s,\xi)\right).
\end{align}
According to Remark~\ref{Rem:bs}, for any frequency~$\xi\neq 0$ and
initial time~$s\geq0$ (with no loss of generality we can
assume~$s\geq \overline{t}$) we can distinguish various cases. We
first consider the case of~$\bs(t)$ decreasing,
$\bs(t)\searrow\bs_\infty$ with~$\bs_\infty\in [0,+\infty)$,
and~$(s,\xi)\in Z_\elli$, that is, $\xii
\leq\eta(s)\sqrt{1-\ee^2}$.
\begin{itemize}
\item If~$|\xi|>\bs_\infty\sqrt{N^2+1}$, then there exist~$t_\pd>t_\red>t_\elli\geq s$ such that for any~$t\geq t_\pd$ it follows that
\[ \mathcal{E}(t,s,\xi)=\mathcal{E}_\hyp(t,t_\pd,\xi)\mathcal{E}_\pd(t_\pd,t_\red,\xi)\mathcal{E}_\red(t_\red,t_\elli,\xi)\mathcal{E}_\elli(t_\elli,s,\xi). \]
In particular, this happens for any frequency~$\xi\neq0$
if~$\bs_\infty=0$.
\item If~$\bs_\infty\sqrt{1+\ee^2}<|\xi|\leq\bs_\infty\sqrt{N^2+1}$, then there exist~$t_\red>t_\elli\geq s$ such that for any~$t\geq t_\red$ it follows that
\[ \mathcal{E}(t,s,\xi)=\mathcal{E}_\pd(t,t_\red,\xi)\mathcal{E}_\red(t_\red,t_\elli,\xi)\mathcal{E}_\elli(t_\elli,s,\xi). \]
\item If~$\bs_\infty\sqrt{1-\ee^2}<|\xi|\leq\bs_\infty\sqrt{1+\ee^2}$, then there exists ~$t_\elli\geq s$ such that for any~$t\geq t_\elli$ it follows that
\[ \mathcal{E}(t,s,\xi)=\mathcal{E}_\red(t,t_\elli,\xi)\mathcal{E}_\elli(t_\elli,s,\xi). \]
\item If~$\xii\leq\bs_\infty\sqrt{1-\ee^2}$, then~$\mathcal{E}(t,s,\xi)=\mathcal{E}_\elli(t,s,\xi)$.
\end{itemize}
On the other hand, if~$\xii\geq\eta(s)\sqrt{N^2+1}$,
then~$\mathcal{E}(t,s,\xi)=\mathcal{E}_\hyp(t,s,\xi)$ for any~$t\in
[s,\infty)$. The intermediate cases are clear.
\\
If we consider the case of~$\bs(t)$ increasing,
$\bs(t)\nearrow\bs_\infty$ with~$\bs_\infty\in (0,+\infty]$, then
the situation is reversed. In particular, for any frequency~$\xii\in
[\bs(s)\sqrt{N^2+1},\bs_\infty\sqrt{1-\ee^2})$ (if this set is
not empty), there exist~$t_\red>t_\pd>t_\hyp\geq s$ such that for
any~$t\geq t_\red$ it follows that
\[ \mathcal{E}(t,s,\xi)=\mathcal{E}_\elli(t,t_\red,\xi)\mathcal{E}_\red(t_\red,t_\pd,\xi)\mathcal{E}_\pd(t_\pd,t_\hyp,\xi)\mathcal{E}_\hyp(t_\hyp,s,\xi). \]


\subsection{Estimates for the multipliers}

We have to derive estimates for~$|\widehat{\Phi}(t,s,\xi)|$ in each
zone of the extended phase space. The estimates
for~$|\widehat{\Phi}'(t,s,\xi)|$ will be obtained by a more refined
approach. Since~$\mathcal{E}_{12}(t,s,\xi)$ is multiplied by
\[ \frac{\lambda(s)}{\lambda(t)} \frac1{h(t,\xi)} \]
we look in each zone for an estimate of the scalar and non-negative
term
\[ a(t_2,t_1,\xi) \doteq \frac{\lambda(t_1)}{\lambda(t_2)} \frac{h(t_1,\xi)}{h(t_2,\xi)} \|\mathcal{E}(t_2,t_1,\xi)\| \]
for any~$(t_1,\xi),(t_2,\xi)$ in that zone with $t_1 \leq t_2$.
Indeed, from~\eqref{eq:Phirep} it follows
\[ |\widehat{\Phi}(t,s,\xi)| \lesssim \frac1{h(s,\xi)} \, a(t,s,\xi). \]
Following the ideas from the proof to Theorem 17 in~\cite{W07} we
can easily check that the desired estimate in~$Z_\elli(\ee)$ is
\begin{equation}\label{eq:aelli}
a_\elli(t_2,t_1,\xi) \lesssim \exp \left( -C\abs{\xi}^2
\int_{t_1}^{t_2} \frac1{b(\tau)} \,d\tau \right) = \exp
(-C\abs{\xi}^2B(t_2,t_1)).
\end{equation}
We remark that the estimate
\[ \frac{h(t_1,\xi)}{h(t_2,\xi)} \ \frac{(m(t_2,\xi))^{\frac14}}{(m(t_1,\xi))^{\frac14}} \approx \frac{(m(t_1,\xi))^{\frac14}}{(m(t_2,\xi))^{\frac14}}  \]
plays a fundamental role.
\\
In~$Z_\red(\ee)$ it holds $h(t,\xi)\approx \bs(t)\approx |\xi|$
while ~$h(t,\xi)\approx |\xi|$ in~$Z_\pd(N,\ee)$ and
in~$Z_\hyp(N)$. Therefore,  we can
assume~$h(t_1,\xi)/h(t_2,\xi)\approx 1$ in all these zones. The best
estimate is obtained in~$Z_\hyp(N)$.
Since~$\mathcal{E}(t_2,t_1,\xi)$ is bounded we conclude
\begin{align}
\label{eq:ahyp}
a_\hyp(t_2,t_1,\xi)
    & \lesssim \frac{\lambda(t_1)}{\lambda(t_2)}\,;
\intertext{on the other hand, in~$Z_\pd(N,\ee)$ we have}
\label{eq:apd}
a_\pd(t_2,t_1,\xi)
    & \leq \frac{(1+t_2)^{c}}{(1+t_1)^{c}}\frac{\lambda(t_1)}{\lambda(t_2)},
\intertext{whereas in~$Z_\red(\ee)$ we have}
\label{eq:ared}
a_\red(t_2,t_1,\xi)
    & \leq \exp \left(C\ee \int_{t_1}^{t_2} b(\tau)\,d\tau \right) \frac{\lambda(t_1)}{\lambda(t_2)} \equiv \left(\frac{\lambda(t_1)}{\lambda(t_2)}
    \right)^{1-2\delta},
\end{align}
where we choose~$\ee>0$ such that~$\delta\doteq C\ee<1/2$.
It is clear that in the zones~$Z_\hyp(N)$, $Z_\pd(N,\ee)$
and~$Z_\red(\ee)$ we can uniformly estimate~$a(t_2,t_1,\xi)$ by
the upper bound from ~\eqref{eq:ared}, which is the worst among
\eqref{eq:ahyp}-\eqref{eq:apd}-\eqref{eq:ared}. Moreover, we remark
that the parameter~$\xii$ does not come into play in these
estimates. Nevertheless, we should be careful when we compare with
the estimate~\eqref{eq:aelli}, which has a completely different
structure. Having this in mind we define
\[ \Pi_\hyp(\ee) = Z_\red(\ee)\cup Z_\pd(N,\ee) \cup Z_\hyp(N), \]
and we denote by~$t_\xii$ the separating curve
among~$Z_\elli(\ee)$ and~$\Pi_\hyp(\ee)$, that is, the
separating curve between~$Z_\elli(\ee)$
and~$Z_\red(\ee)$). This curve is given by
\[ \bs^2(t_\xii) - \xii^2 = \ee^2 \bs^2(t_\xii), \qquad \text{i.e.} \quad t_\xii = \bs^{-1}\left(\frac{\xii}{\sqrt{1-\ee^2}}\right). \]
We distinguish two cases.
\begin{itemize}
\item For small frequencies~$\xii\leq \bs(s)\sqrt{1-\ee^2}$, since~$h(s,\xi)\approx\bs(s)\approx b(s)$ it holds
\begin{align}
\label{eq:Phiestell}
|\widehat{\Phi}(t,s,\xi)|
    & \lesssim \frac1{b(s)} \exp (-C\xii^2B(t,s)) \quad \text{for~$t\leq t_\xii$,} \\
\label{eq:Phiestellhyp}
|\widehat{\Phi}(t,s,\xi)|
    & \lesssim \frac1{b(s)} \exp \left(-C\xii^2B(t_\xii,s)\right) \left(\frac{\lambda(t_\xii)}{\lambda(t)}\right)^{1-2\delta} \quad \text{for~$t\geq t_\xii$.}
\end{align}
We recall that~$t_\xii=\infty$ if~$\xii\leq\bs_\infty\sqrt{1-\ee^2}$ (in particular, this is trivially true if~$\bs(t)$ is increasing).
\item For large frequencies~$\xii\geq\bs(s)\sqrt{1-\ee^2}$, since~$h(s,\xi)\approx\xii$ it holds
\begin{align}
\label{eq:Phiesthyp}
|\widehat{\Phi}(t,s,\xi)|
    & \lesssim \frac1{\xii} \left(\frac{\lambda(s)}{\lambda(t)}\right)^{1-2\delta} \quad \text{for~$t\leq t_\xii$,} \\
\label{eq:Phiesthypell}
|\widehat{\Phi}(t,s,\xi)|
    & \lesssim \frac1{\xii} \left(\frac{\lambda(s)}{\lambda(t_\xii)}\right)^{1-2\delta} \exp\left(-C\xii^2B(t,t_\xii)\right) \quad \text{for~$t\geq t_\xii$.}
\end{align}
We recall that~$t_\xii=\infty$ if~$\xii\geq\bs_\infty\sqrt{1-\ee^2}$ (in particular, this is trivially true if~$\bs(t)$ is decreasing).
\end{itemize}


\subsection{Estimates for the time derivative of the multipliers}

We consider~$\widehat{\Phi}'(t,s,\xi)$. In~$\Pi_\hyp(\ee)$ we
directly use the representation ~\eqref{eq:Phi1rep} together
with~$b(t)\lesssim h(t,\xi)$ and~$h(s,\xi)\approx\xii\approx
h(t,\xi)$. Therefore, for large frequencies~$\xii\geq
\bs(s)\sqrt{1-\ee^2}$ and for~$t\leq t_\xii$ we can estimate
\begin{align}
\label{eq:Phi1esthyp}
|\widehat{\Phi}'(t,s,\xi)|
    & \lesssim \left(\frac{\lambda(s)}{\lambda(t)}\right)^{1-2\delta},
\intertext{whereas for small frequencies~$\xii\leq \bs(s)\sqrt{1-\ee^2}$ and~$t\geq t_\xii$ we get}
\label{eq:Phi1estellhyppartial}
|\widehat{\Phi}'(t,s,\xi)|
    & \lesssim |\widehat{\Phi}'(t_\xii,s,\xi)| \left(\frac{\lambda(t_\xii)}{\lambda(t)}\right)^{1-2\delta}.
\end{align}
It remains to estimate two objects:
\begin{itemize}
\item $\widehat{\Phi}'(t,s,\xi)$ in the case of small frequencies~$\xii\leq \bs(s)\sqrt{1-\ee^2}$ for any~$t\leq t_\xii$,
\item $\widehat{\Phi}'(t,s,\xi)$ for large frequencies~$\xii\geq\bs(s)\sqrt{1-\ee^2}$ and for any~$t\geq t_\xii$ (we remark that this
case comes into play only if~$\bs(t)$ is decreasing).
\end{itemize}
A direct estimate for~$\widehat{\Phi}'(t,s,\xi)$ is not appropriate for small frequencies~$\xii\leq \bs(s)\sqrt{1-\ee^2}$ and~$t\leq t_\xii$. Taking account of
\[ \widehat{\Phi}''+|\xi|^2\widehat{\Phi}+b(t)\widehat{\Phi}'=0, \quad \widehat{\Phi}(s,s,\xi)=0,\quad \widehat{\Phi}'(s,s,\xi)=1 \]
and setting~$y(t,\xi)=\widehat{\Phi}'(t,s,\xi)$ we get
\begin{equation}\label{eq:ODEPhi}
y'+b(t)y=|\xi|^2\widehat{\Phi}(t,s,\xi), \quad y(s,\xi)=1.
\end{equation}
This leads to the integral equation
\[ y(t,\xi)=\exp\left(-\int_s^tb(\tau)\,d\tau\right) \left(y(s,\xi)+\int_s^t \exp\left(\int_s^\tau b(\sigma)\,
d\sigma\right) |\xi|^2\widehat{\Phi}(\tau,s,\xi)\,d\tau\right), \]
that is,
\[ \widehat{\Phi}'(t,s,\xi)=\frac{\lambda^2(s)}{\lambda^2(t)} +\int_s^t \frac{\lambda^2(\tau)}{\lambda^2(t)} |\xi|^2\widehat{\Phi}(\tau,s,\xi)\,d\tau. \]
Analogously to Lemma 20 in~\cite{W07} we can prove that
\begin{equation}\label{eq:Phi1est}
|\widehat{\Phi}'(t,s,\xi)| \lesssim \frac{\abs{\xi}^2}{b(s)b(t)}
\exp \big(-C|\xi|^2 B(t,s)\big).
\end{equation}
Indeed, by using~\eqref{eq:Phiestell} in the integral and applying
integration by parts (we remark that
$b(\tau)\lambda^2(\tau)/\lambda^2(t)=\partial_\tau
(\lambda^2(\tau)/\lambda^2(t))$) we get
\begin{align*}
|\widehat{\Phi}'(t,s,\xi)|
    & \lesssim \frac{\lambda^2(s)}{\lambda^2(t)} +\int_s^t \left(\frac{\lambda^2(\tau)}{\lambda^2(t)}b(\tau)\right)
    \left( \frac{|\xi|^2}{b(s)b(\tau)} \exp\big(-C|\xi|^2B(\tau,s)\big) \right) \,d\tau \\
    & = \frac{\lambda^2(s)}{\lambda^2(t)} + \frac{|\xi|^2}{b(s)b(t)} \exp\big(-C|\xi|^2B(t,s)\big) -\frac1{b(s)}
    \int_s^t \frac{\lambda^2(\tau)}{\lambda^2(t)} \partial_\tau \left(\frac{|\xi|^2}{b(\tau)} \exp\big(-C|\xi|^2B(\tau,s)\big)\right)d\tau.
\end{align*}
One can show that for $\bs(t)$ increasing or decreasing the second
term determines the desired estimate. Therefore we
derive~\eqref{eq:Phi1est}. Combined
with~\eqref{eq:Phi1estellhyppartial} this allow us to derive for
small frequencies~$\xii\leq \bs(s)\sqrt{1-\ee^2}$  the following
estimates:
\begin{align}
\label{eq:Phi1estell}
|\widehat{\Phi}'(t,s,\xi)|
    & \lesssim \frac{\abs{\xi}^2}{b(s)b(t)} \exp (-C|\xi|^2 B(t,s)) \quad \text{for~$t\leq t_\xii$,} \\
\label{eq:Phi1estellhyp}
|\widehat{\Phi}'(t,s,\xi)|
    & \lesssim \frac{\xii}{b(s)} \exp (-C|\xi|^2 B(t_\xii,s)) \left(\frac{\lambda(t_\xii)}{\lambda(t)}\right)^{1-2\delta} \quad \text{for~$t\geq t_\xii$.}
\end{align}
We remark that we used~$b(t_\xii)\approx \xii$ in~\eqref{eq:Phi1estellhyp}.

\bigskip

To estimate~$\widehat{\Phi}'(t,s,\xi)$ for large
frequencies~$\xii\geq\bs(s)\sqrt{1-\ee^2}$ and for any~$t\geq
t_\xii$ we slightly modify this approach. Indeed, we still
put~$y(t,\xi)=\widehat{\Phi}'(t,s,\xi)$, but now we look for an
estimate of the solution to
\begin{equation}\label{eq:ODEPhitxi}
\begin{cases}
y'+b(t)y=|\xi|^2\widehat{\Phi}(t,s,\xi), \quad t\geq t_\xii, \\
y(t_\xii,\xi)=\widehat{\Phi}'(t_\xii,s,\xi).
\end{cases}
\end{equation}
By using~\eqref{eq:Phiesthypell} for~$\widehat{\Phi}(t,s,\xi)$
and~\eqref{eq:Phi1esthyp} for~$\widehat{\Phi}'(t_\xii,s,\xi)$ we
derive for~$t\geq t_\xii$ the following inequality:
\begin{align*}
|\widehat{\Phi}'(t,s,\xi)|
    & \lesssim \frac{\lambda^2(t_\xii)}{\lambda^2(t)}
    \left[ \left(\frac{\lambda(s)}{\lambda(t_\xii)}\right)^{1-2\delta} +
    \int_{t_\xii}^t \frac{\lambda^2(\tau)}{\lambda^2(t_\xii)} \, \xii^2 \left(\frac1\xii \left(\frac{\lambda(s)}{\lambda(t_\xii)}\right)^{1-2\delta} \,
    \exp\big(-C|\xi|^2B(\tau,t_\xii)\big)\right) \,d\tau \right] \\
    & \lesssim \left(\frac{\lambda(s)}{\lambda(t_\xii)}\right)^{1-2\delta} \left[ \frac{\lambda^2(t_\xii)}{\lambda^2(t)}
    +\frac1\xii \int_{t_\xii}^t \left(\frac{\lambda^2(\tau)}{\lambda^2(t)}b(\tau)\right)\left( \frac{|\xi|^2}{b(\tau)}
    \exp\big(-C|\xi|^2B(\tau,t_\xii)\big) \right) \,d\tau \right].
\end{align*}
We can now easily follow the previous reasoning. Therefore, we
derive for large frequencies~$\xii\geq\bs(s)\sqrt{1-\ee^2}$ and
for any~$t\geq t_\xii$ the estimate
\begin{equation}\label{eq:Phi1esthypell}
|\widehat{\Phi}'(t,s,\xi)| \lesssim
\left(\frac{\lambda(s)}{\lambda(t_\xii)}\right)^{1-2\delta}
\frac{\xii}{b(t)} \exp\big(-C|\xi|^2 B(t,t_\xii)\big).
\end{equation}


\subsection{Small frequencies and large frequencies}

We are now in position to prove the following statement.
\begin{Lem}\label{Lem:freq}
For any~$s \in [0,\infty)$ and for any~$t\geq s$ let us define
\[ \Theta(t,s)\doteq \max\{\bs(s),\bs(t)\} \sqrt{1-\ee^2}. \]
Then the estimates \eqref{eq:Phiesthyp}-\eqref{eq:Phi1esthyp} hold
for any~$\xii\geq\Theta(t,s)$, whereas for
any~$\xii\leq\Theta(t,s)$, we have the following:
\begin{align}
\label{eq:Phiellall}
|\widehat{\Phi}(t,s,\xi)|
    & \lesssim \frac1{b(s)} \exp\big(-C'\xii^2B(t,s)\big),\\
\label{eq:Phi1ellall}
|\widehat{\Phi}'(t,s,\xi)|
    & \lesssim \frac{\xii^2}{b(s)b(t)} \exp\big(-C'\xii^2B(t,s)\big).
\end{align}
\end{Lem}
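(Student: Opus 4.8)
The statement consolidates the frequency-wise bounds \eqref{eq:Phiesthyp}--\eqref{eq:Phi1esthypell} obtained above into a single dichotomy according to whether $\xii$ is above or below the threshold $\Theta(t,s)=\max\{\bs(s),\bs(t)\}\sqrt{1-\ee^2}$, which is exactly the smallest frequency for which the segment $\{(\tau,\xi)\colon\tau\in[s,t]\}$ can avoid $Z_\elli(\ee)$ altogether. The plan is a case distinction driven by the monotonicity of $\bs$ (equivalently of $b$), which forces the separating curve $t_\xii=\bs^{-1}(\xii/\sqrt{1-\ee^2})$ to be crossed at most once by that segment.

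First I would treat the large frequencies $\xii\ge\Theta(t,s)$. Since $\xii\ge\bs(s)\sqrt{1-\ee^2}$ and $\xii\ge\bs(t)\sqrt{1-\ee^2}$, monotonicity gives $\xii\ge\bs(\tau)\sqrt{1-\ee^2}$ for all $\tau\in[s,t]$ (use $\bs(\tau)\le\bs(t)$ when $\bs$ is increasing and $\bs(\tau)\le\bs(s)$ when $\bs$ is decreasing); hence the segment never enters $Z_\elli(\ee)$, i.e. $t\le t_\xii$ in the sense of the previous subsection (with $t_\xii=\infty$ whenever $\bs$ is decreasing or $\xii\ge\bs_\infty\sqrt{1-\ee^2}$). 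So \eqref{eq:Phiesthyp} and \eqref{eq:Phi1esthyp} hold verbatim, giving the first assertion.

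For small frequencies $\xii\le\Theta(t,s)$ at least one of $\xii\le\bs(s)\sqrt{1-\ee^2}$, $\xii\le\bs(t)\sqrt{1-\ee^2}$ holds, so the segment meets $Z_\elli(\ee)$. If it lies in $Z_\elli(\ee)$ throughout --- by monotonicity, exactly when $t$ is on the elliptic side of $t_\xii$, in particular whenever $\bs$ is increasing and $(s,\xi)\in Z_\elli(\ee)$ --- then \eqref{eq:Phiestell} for $\widehat\Phi$ and \eqref{eq:Phi1estell} for $\widehat\Phi'$ are precisely \eqref{eq:Phiellall}--\eqref{eq:Phi1ellall} with $C'=C$. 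Otherwise there is one crossing at $t_\xii\in(s,t)$: for $\bs$ increasing the segment goes from $\Pi_\hyp(\ee)$ to $Z_\elli(\ee)$ and I would use \eqref{eq:Phiesthypell}, \eqref{eq:Phi1esthypell}; for $\bs$ decreasing it goes from $Z_\elli(\ee)$ to $\Pi_\hyp(\ee)$ and I would use \eqref{eq:Phiestellhyp}, \eqref{eq:Phi1estellhyp}. In either case one splits $B(t,s)=B(t,t_\xii)+B(t_\xii,s)$ and uses the relation $\bs(t_\xii)=\xii/\sqrt{1-\ee^2}$, i.e. $b(t_\xii)\approx\xii$, on the separating curve: there the ``wave factor'' $(\lambda(t_\xii)/\lambda(t))^{1-2\delta}=\exp\!\big(-\tfrac{1-2\delta}{2}\int b\big)$ and the ``Gaussian factor'' $\exp(-C'\xii^2B(\cdot,t_\xii))=\exp\!\big(-C'\xii^2\int b^{-1}\big)$ agree up to constants, since $b\approx\xii$ makes $\tfrac12 b$ and $\xii^2 b^{-1}$ comparable; monotonicity of $b$ and the $B$-properties \eqref{eq:Btsbehav2}, \eqref{eq:balphac}--\eqref{eq:balphaC}, \eqref{eq:Bs0large}--\eqref{eq:Btslarge} then let one absorb both factors, together with the already accumulated $\exp(-C\xii^2B(t_\xii,s))$, into $\tfrac1{b(s)}\exp(-C'\xii^2B(t,s))$ at the price of lowering $C'$. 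The prefactor in \eqref{eq:Phi1ellall} is reconciled the same way, $b(t_\xii)\approx\xii$ turning the $\xii/b(s)$ of \eqref{eq:Phi1estellhyp} and the $\xii/b(t)$ of \eqref{eq:Phi1esthypell} into $\xii^2/(b(s)b(t))$.

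The delicate point is precisely this crossing case: one must glue the Gaussian decay valid in $Z_\elli(\ee)$ to the $\lambda(s)/\lambda(t)$ decay valid in $\Pi_\hyp(\ee)$ along $t_\xii$, where $b(t_\xii)\approx\xii$ is the matching relation, and verify that the dominant factor on the hyperbolic side is still controlled by $\exp(-C'\xii^2B(t,s))$ after decreasing $C'$; away from a neighbourhood of $t_\xii$, and in the whole large-frequency regime, the statement is a direct transcription of the estimates already in hand.
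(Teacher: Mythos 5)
Your proposal is structurally the same as the paper's proof: the same dichotomy at $\Theta(t,s)$, the same three sub-cases for small frequencies (segment entirely elliptic; elliptic$\to$hyperbolic for decreasing $\bs$; hyperbolic$\to$elliptic for increasing $\bs$), and the same gluing along the separating curve via $b(t_\xii)\approx\xii$.

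The problem sits exactly in the step you call delicate, and it is a genuine gap, not a missing detail. In the elliptic$\to$hyperbolic case ($\bs$ decreasing, $\bs(t)\sqrt{1-\ee^2}\leq\xii\leq\bs(s)\sqrt{1-\ee^2}$) the absorption of the wave factor into the Gaussian requires
\[
\frac{1-2\delta}{2}\int_{t_\xii}^{t}b(\sigma)\,d\sigma\;\geq\;C'\,\xii^2\int_{t_\xii}^{t}\frac{d\sigma}{b(\sigma)},
\]
but on $[t_\xii,t]\subset\Pi_\hyp(\ee)$ one only has $b(\sigma)\lesssim\xii$, hence $\xii^2/b(\sigma)\gtrsim b(\sigma)$ --- the comparison goes the wrong way. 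The relation $b\approx\xii$ you invoke holds \emph{only on} the curve $t_\xii$, not along the hyperbolic portion of the segment. Concretely, for $b(t)=\mu(1+t)^{-\expo}$ with $\expo\in(0,1)$, a fixed small $\xii$ and $t\gg t_\xii\approx \xii^{-1/\expo}$, one has $\int_{t_\xii}^{t}b\approx t^{1-\expo}$ while $\xii^2B(t,t_\xii)\approx\xii^2 t^{1+\expo}$, whose ratio $\xii^2t^{2\expo}\to\infty$; so \eqref{eq:Phiestellhyp} cannot be upgraded to \eqref{eq:Phiellall} for any $C'>0$ in this regime. (The paper's own proof of its case (B) has the same defect: the asserted chain ``$b(\tau)\lesssim\xii\lesssim b(\sigma)$ for $\tau\leq t_\xii\leq\sigma$'' is reversed for decreasing $b$, so you have faithfully reproduced an argument that does not close.) By contrast, in the hyperbolic$\to$elliptic case ($\bs$ increasing) the needed inequality $\xii^2B(t_\xii,s)\lesssim\int_s^{t_\xii}b$ is true, but only in \emph{integrated} form: one must use \eqref{eq:Btsbehav2} and \eqref{eq:bsbt} together with $b(s)\lesssim\xii\approx b(t_\xii)$ to compare $\xii^2\bigl(\tfrac{t_\xii}{b(t_\xii)}-\tfrac{s}{b(s)}\bigr)$ with $t_\xii b(t_\xii)-s\,b(s)$; a pointwise comparability of $\tfrac12 b$ and $\xii^2b^{-1}$ is again false there. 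For the purposes of the $L^2$ estimate \eqref{eq:Matsusmall} the offending annulus can be treated separately with the unimproved bound \eqref{eq:Phiestellhyp}, since there $\xii^2B(t_\xii,s)+\int_{t_\xii}^{t}b\gtrsim t\,b(t)-s\,b(s)\to\infty$; but as a pointwise statement for all $\xii\leq\Theta(t,s)$, \eqref{eq:Phiellall} does not follow from the estimates in hand, and your proof would need to either restrict the decreasing case or weaken the claimed bound there.
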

\begin{Rem}
The small frequencies~$\xii\leq\Theta(t,s)$ are the ones such
that~$(s,\xi)\in Z_\elli(\ee)$ or~$(t,\xi)\in
Z_\elli(\ee)$, whereas the large
frequencies~$\xii\geq\Theta(t,s)$ are the ones for which
both~$(s,\xi),(t,\xi)\in \Pi_\hyp(\ee)$.
\end{Rem}
\begin{proof}
The first part of Lemma~\ref{Lem:freq} is trivial
since~$\xii\geq\Theta(t,s)$ means that $(s,\xi)\in Z_\hyp(\ee)$
and~$t\leq t_\xii$.
\\
To prove \eqref{eq:Phiellall}-\eqref{eq:Phi1ellall}
for~$\xii\leq\theta(t,s)$ we distinguish three cases:
\begin{enumerate}[(A)]
\item\label{en:ellell} $\xii\leq \max \{\bs(s),\bs(t)\} \sqrt{1-\ee^2}$;
\item\label{en:ellhyp} $\bs$ is decreasing and $\bs(t)\sqrt{1-\ee^2}\leq\xii\leq \bs(s)\sqrt{1-\ee^2}$;
\item\label{en:hypell} $\bs$ is increasing and $\bs(s)\sqrt{1-\ee^2}\leq\xii\leq\bs(t)\sqrt{1-\ee^2}$.
\end{enumerate}
In the case~\eqref{en:ellell} the two conditions
\eqref{eq:Phiellall}, \eqref{eq:Phi1ellall} coincide with
\eqref{eq:Phiestell}, \eqref{eq:Phi1estell}.
\\
Now let ~$\bs(t)$ be a decreasing function. Since ~$b(\tau)\lesssim
\xii \lesssim b(\sigma)$ for any~$\tau\leq t_\xii\leq \sigma$ it
holds
\begin{align*}
\exp \bigl( -C_1\xii^2 B(t_\xii,s) \bigr) + \left(\frac{\lambda(t_\xii)}{\lambda(t)}\right)^{2C_2}
    & = \exp \left(-C_1\xii^2 \int_s^{t_\xii} \frac1{b(\tau)}d\tau -C_2 \int_{t_\xii}^t b(\sigma)\,d\sigma \right) \,d\xi \\
    & \leq \exp \left( -\min\{C_1,C_2\} \xii^2 B(t,s) \right).
\intertext{So \eqref{eq:Phiellall}, \eqref{eq:Phi1ellall}
immediately follows from \eqref{eq:Phiestellhyp},
\eqref{eq:Phi1estellhyp} in the case~\eqref{en:ellhyp}. Let
~$\bs(t)$ be an increasing function. Since ~$b(\sigma)\lesssim \xii
\lesssim b(\tau)$ for any~$\tau\leq t_\xii\leq \sigma$ it holds}
\left(\frac{\lambda(s)}{\lambda(t_\xii)}\right)^{2C_1} - \exp \bigl( C_2\xii^2 B(t_\xii,s) \bigr)
    & = \exp \left(-C_1\xii^2 \int_s^{t_\xii} b(\tau)d\tau - C_2 \int_{t_\xii}^t \frac1{b(\sigma)} \,d\sigma \right) \,d\xi \\
    & \leq \exp \left( -\min\{C_1,C_2\} \xii^2 B(t,s) \right).
\end{align*}
Then \eqref{eq:Phiellall}, \eqref{eq:Phi1ellall} follows from
\eqref{eq:Phiesthypell}, \eqref{eq:Phi1esthypell} by using
~$b(s)\lesssim\xii$ in the case~\eqref{en:hypell}.
\end{proof}


\subsection{Matsumura-type estimates}

In order to estimate the~$L^2$ norm
of~$\partial_t^l\partial_x^\alpha \Phi(t,s,x)\ast_{(x)}g(s,x)$
for~$l=0,1$ and for any~$|\alpha|\geq0$ we follow the ideas
in~\cite{Matsu} and we distinguish between small and large
frequencies. We fix~$t\in [s,\infty)$.
\begin{Lem}\label{Lem:Matsularge}
The following estimate holds for large
frequencies~$\xii\geq\Theta=\Theta(t,s)$$:$
\begin{equation}\label{eq:Matsularge}
\|
\xii^{|\alpha|}\partial_t^l\widehat{\Phi}(t,s,\cdot)\widehat{g}(s,\cdot)
\|_{L^2_{\{|\xi|\geq \Theta\}}} \lesssim \frac1{b(s)}
\left(\frac{\lambda(s)}{\lambda(t)}\right)^{1-2\delta} \| g(s,\cdot)
\|_{H^{[|\alpha|+l-1]^+}}
\end{equation}
for~$l=0,1$ and for any~$|\alpha|\geq0$, where~$[x]^+$ denotes the
positive part of~$x$.
\end{Lem}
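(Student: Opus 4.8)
The plan is to deduce \eqref{eq:Matsularge} directly from the pointwise multiplier bounds that are already available on the large–frequency set and then to pass to $L^2$ by Plancherel's theorem.

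First I fix $t\ge s$ and work on $\{\xii\ge\Theta(t,s)\}$. By Lemma~\ref{Lem:freq} (and the Remark following it) every such $\xi$ satisfies $(s,\xi),(t,\xi)\in\Pi_\hyp(\ee)$ with $t\le t_\xii$, so the hyperbolic–type estimates \eqref{eq:Phiesthyp} and \eqref{eq:Phi1esthyp} apply on this region:
\[
\abs{\widehat{\Phi}(t,s,\xi)}\lesssim \frac1{\xii}\Big(\frac{\lambda(s)}{\lambda(t)}\Big)^{1-2\delta},
\qquad
\abs{\widehat{\Phi}'(t,s,\xi)}\lesssim \Big(\frac{\lambda(s)}{\lambda(t)}\Big)^{1-2\delta}.
\]
The decisive geometric observation is that $\Theta(t,s)=\max\{\bs(s),\bs(t)\}\sqrt{1-\ee^2}\ge \tfrac{\sqrt{1-\ee^2}}{2}\,b(s)$, so that on the whole region of integration one has $\xii^{-1}\le \Theta(t,s)^{-1}\lesssim b(s)^{-1}$; this is precisely the factor $b(s)^{-1}$ in the right–hand side of \eqref{eq:Matsularge}.

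Next I multiply by $\xii^{|\alpha|}$ and treat the two values of $l$. For $l=0$ the first estimate gives $\xii^{|\alpha|}\abs{\widehat{\Phi}(t,s,\xi)}\lesssim \xii^{|\alpha|-1}(\lambda(s)/\lambda(t))^{1-2\delta}$; if $|\alpha|=0$ I absorb $\xii^{-1}$ into $b(s)^{-1}$ and pair with $\|g(s,\cdot)\|_{L^2}$, while if $|\alpha|\ge1$ I write $\xii^{|\alpha|-1}\le \<\xi\>^{|\alpha|-1}=\<\xi\>^{[|\alpha|+l-1]^+}$ and let the surviving $\xii^{-1}$ produce $b(s)^{-1}$ on $\{\xii\ge\Theta\}$. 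For $l=1$ the second estimate gives $\xii^{|\alpha|}\abs{\widehat{\Phi}'(t,s,\xi)}\lesssim \xii^{|\alpha|}(\lambda(s)/\lambda(t))^{1-2\delta}$, where $\xii^{|\alpha|}\le\<\xi\>^{|\alpha|}=\<\xi\>^{[|\alpha|+l-1]^+}$ and the factor $b(s)^{-1}$ is again obtained from the lower bound $\xii\ge\Theta(t,s)\gtrsim b(s)$. In all cases one arrives at the pointwise inequality
\[
\xii^{|\alpha|}\abs{\partial_t^l\widehat{\Phi}(t,s,\xi)\,\widehat{g}(s,\xi)}\;\lesssim\;\frac1{b(s)}\Big(\frac{\lambda(s)}{\lambda(t)}\Big)^{1-2\delta}\,\<\xi\>^{[|\alpha|+l-1]^+}\abs{\widehat{g}(s,\xi)}\qquad\text{on }\{\xii\ge\Theta\},
\]
so that squaring, integrating over $\{\xii\ge\Theta\}$, pulling the $(s,t)$–prefactor out of the integral, and using Plancherel's theorem gives \eqref{eq:Matsularge}.

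I expect the only point requiring genuine care to be the bookkeeping of the powers of $\xii$: there is essentially a single spare decaying factor $\xii^{-1}$, coming from $h(s,\xi)\approx\xii$ in the representation \eqref{eq:Phirep}, and it must be shared between producing the gain $b(s)^{-1}$ (via $\xii\gtrsim b(s)$ on the large–frequency set) and feeding derivatives into the $H^{[|\alpha|+l-1]^+}$ norm, so the cases $l=0$, $l=1$ and the endpoint $|\alpha|=l=0$ should be separated and matched against the exponent $[|\alpha|+l-1]^+$. By contrast, the four–zone geometry and the dichotomy between increasing and decreasing $\bs(t)$ cause no difficulty here: they have already been packaged into the statement of Lemma~\ref{Lem:freq}, which supplies the same large–frequency bound in every case. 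Finally, the super–exponential factor $(\lambda(s)/\lambda(t))^{1-2\delta}$ is simply carried along untouched, to be converted into the polynomial decay of \eqref{eq:Matsugeneral} at a later stage.
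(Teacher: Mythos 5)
Your route is the paper's: on $\{\xii\geq\Theta(t,s)\}$ both $(s,\xi)$ and $(t,\xi)$ lie in $\Pi_\hyp(\ee)$ with $t\leq t_{\xii}$, so \eqref{eq:Phiesthyp}--\eqref{eq:Phi1esthyp} apply, and one concludes by taking the multiplier out in $L^\infty$ and invoking Plancherel. Your treatment of the endpoint $|\alpha|=l=0$ is also the paper's: there the spare factor $\xii^{-1}$, together with $\xii\geq\Theta(t,s)\gtrsim \bs(s)$, really does produce the prefactor $(b(s))^{-1}$.

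The gap is in the cases $|\alpha|+l\geq1$. Your concluding pointwise inequality does not follow from the bounds you quote, because the single spare factor $\xii^{-1}$ present in \eqref{eq:Phiesthyp} (and absent from \eqref{eq:Phi1esthyp}) is asked to do two jobs at once: once you write $\xii^{|\alpha|}\abs{\widehat{\Phi}(t,s,\xi)}\lesssim\xii^{|\alpha|-1}(\lambda(s)/\lambda(t))^{1-2\delta}\leq\<\xi\>^{[|\alpha|+l-1]^+}(\lambda(s)/\lambda(t))^{1-2\delta}$, there is no ``surviving $\xii^{-1}$'' left to be traded for $(b(s))^{-1}$; and for $l=1$, $|\alpha|=0$ the asserted bound $\abs{\widehat{\Phi}'(t,s,\xi)}\lesssim(b(s))^{-1}(\lambda(s)/\lambda(t))^{1-2\delta}$ already fails at $t=s$, where $\widehat{\Phi}'(s,s,\xi)=1$, unless $b(s)\lesssim1$. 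So the mechanism ``$\xii\gtrsim b(s)$ yields $(b(s))^{-1}$'' is being invoked a second time for a factor that has already been spent. For comparison, the paper's own proof for $|\alpha|+l\geq1$ estimates $\|\abs{\xi}^{1-l}\partial_t^l\widehat{\Phi}\|_{L^\infty}\,\|\abs{\xi}^{|\alpha|+l-1}\widehat{g}\|_{L^2}$ and in fact obtains \eqref{eq:Matsularge} \emph{without} the factor $(b(s))^{-1}$; this is equivalent to the stated estimate only when $b$ is bounded (e.g.\ decreasing, where $(b(s))^{-1}\geq(b(0))^{-1}$ may be inserted for free), whereas for increasing unbounded $b$ (admissible under Hypothesis~\ref{Hyp:b}, e.g.\ \eqref{eq:bk} with $\expo<0$) the prefactor is genuinely unavailable near $t=s$. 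A correct write-up must therefore either forgo $(b(s))^{-1}$ in the large-frequency regime for $|\alpha|+l\geq1$, or restrict to bounded $b$, or recover the factor from the exponential $(\lambda(s)/\lambda(t))^{1-2\delta}$ when $\int_s^t b(\tau)\,d\tau$ is large; the pointwise inequality as you state it cannot be justified.
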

\begin{proof}
First, let~$|\alpha|+l\geq1$. We can estimate
\[ \| \xii^{|\alpha|}\partial_t^l\widehat{\Phi}(t,s,\cdot)\widehat{g}(s,\cdot) \|_{L^2_{\{|\xi|\geq \Theta\}}}
\leq \| |\xi|^{1-l}
\partial_t^l\widehat{\Phi}(t,s,\cdot)\|_{L^\infty_{\{|\xi|\geq
\Theta\}}} \| |\xi|^{|\alpha|+l-1} \widehat{g}(s,\cdot)
\|_{L^2_{\{|\xi|\geq \Theta\}}} \]
for any~$|\alpha|+l\geq1$ since~$|\xi|\leq\<\xi\>$. The second term
can be estimated by~$\|g(s,\cdot)\|_{H^{|\alpha|+l-1}}$. Thanks to
the estimates \eqref{eq:Phiesthyp}, \eqref{eq:Phi1esthyp}, namely
\[ |\partial_t^l\widehat{\Phi}(t,s,\xi)|\lesssim \xii^{-1+l} (\lambda(s)/\lambda(t))^{1-2\delta}, \]
we get a decay uniformly in~$\xii\geq\Theta$ which is given by
\[ \left(\frac{\lambda(s)}{\lambda(t)}\right)^{1-2\delta} = \exp \left(-(1/2-\delta)\int_s^t b(\tau)\,d\tau \right). \]
Now let~$|\alpha|=l=0$. If~$\bs_\infty>0$, then~$\Theta(t,s)\geq
C=\bs_\infty\sqrt{1-\ee^2}>0$ for any~$s,t$, and we can follow
the reasoning above since~$|\xi|^{-1}\approx \<\xi\>^{-1}$ uniformly
in~$\xii\geq C$. Otherwise, if~$\bs(t)\to0$, then after recalling
that~$b(s)\lesssim\xii$ for large frequencies we can estimate
\[ \| \widehat{\Phi}(t,s,\cdot)\widehat{g}(s,\cdot) \|_{L^2_{|\xi|\geq \Theta}} \lesssim \frac1{b(s)}
\left(\frac{\lambda(s)}{\lambda(t)}\right)^{1-2\delta} \|g(s,\cdot)
\|_{L^2}. \]
This completes the proof. \end{proof}
\begin{Rem}
If~$\bs(t)\to\bs_\infty>0$, or if we are interested into an estimate
for~$s\in [0,S]$ and~$t\geq s$ for some fixed~$S>0$, then
~$\Theta(t,s)$ is uniformly bounded by a positive constant.
Therefore (see the proof of Lemma~\ref{Lem:Matsularge}), we can
replace $\|g(s,\cdot) \|_{H^{[|\alpha|+l-1]^+}}$ in the
estimate~\eqref{eq:Matsularge} by~ $\|g(s,\cdot)
\|_{H^{|\alpha|+l-1}}$, that is, by $\|g(s,\cdot)\|_{H^{-1}}$ in the
case~$|\alpha|=l=0$.
\\
In particular, this is possible if we are only interested in estimates for~$s=0$. This explains the difference in the regularity of the initial data~$(0,g(s,\cdot))$ if we compare \eqref{eq:MWu} ($L^m\cap H^{-1}$ regularity) and~\eqref{eq:MWDu} ($L^m\cap L^2$ regularity).
\end{Rem}
\begin{Lem}
The following estimate holds for small frequencies
~$\xii\leq\Theta=\Theta(t,s)$$:$
\begin{equation}\label{eq:Matsusmall}
\|
|\xi|^{|\alpha|}\partial_t^l\widehat{\Phi}(t,s,\cdot)\widehat{g}(s,\cdot)
\|_{L^2_{\{|\xi|\leq\Theta\}}}\lesssim \frac1{b(s)}
(B(t,s)b(t))^{-l}
(B(t,s))^{-\frac{|\alpha|}2-\frac{n}2\left(\frac1m-\frac12\right)}
\|g(s,\cdot)\|_{L^m}
\end{equation}
for~$l=0,1$ and for any~$|\alpha|\geq0$.
\end{Lem}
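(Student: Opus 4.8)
The plan is to reduce everything to the pointwise multiplier bounds for small frequencies already established in Lemma~\ref{Lem:freq}, namely \eqref{eq:Phiellall}--\eqref{eq:Phi1ellall}, combined with a H\"older splitting in frequency and the Hausdorff--Young inequality. First I would fix the exponent $r\in[2,\infty]$ determined by $1/r=1/m-1/2$, so that $1/2=1/r+1/m'$ with $1/m+1/m'=1$, and estimate, by H\"older's inequality on the region $\{\xii\le\Theta\}$,
\[
\| |\xi|^{|\alpha|}\partial_t^l\widehat{\Phi}(t,s,\cdot)\widehat{g}(s,\cdot)\|_{L^2_{\{|\xi|\le\Theta\}}}
\le \big\| |\xi|^{|\alpha|}\partial_t^l\widehat{\Phi}(t,s,\cdot)\big\|_{L^r_{\{|\xi|\le\Theta\}}}\,\|\widehat{g}(s,\cdot)\|_{L^{m'}}.
\]
Hausdorff--Young then gives $\|\widehat{g}(s,\cdot)\|_{L^{m'}}\lesssim\|g(s,\cdot)\|_{L^m}$ for $m\in[1,2]$, so it only remains to control the $L^r$ norm of the multiplier; note that no Sobolev regularity of $g$ is needed here, since the polynomial weight $\xii^{|\alpha|}$ is placed on the multiplier side, where it is absorbed by the Gaussian decay.

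For $l=0$ I would insert \eqref{eq:Phiellall}, enlarge the domain of integration from $\{\xii\le\Theta\}$ to all of $\Rn$ (legitimate, since the integrand is nonnegative and the factor $\exp(-C'\xii^2B(t,s))$ makes the integral converge), and perform the scaling $\xi\mapsto\xi/\sqrt{B(t,s)}$:
\[
\big\| |\xi|^{|\alpha|}\widehat{\Phi}(t,s,\cdot)\big\|_{L^r_{\{|\xi|\le\Theta\}}}
\lesssim \frac1{b(s)}\Big(\int_{\Rn}\xii^{r|\alpha|}e^{-C'r\xii^2B(t,s)}\,d\xi\Big)^{1/r}
= \frac{C}{b(s)}\,B(t,s)^{-\frac{|\alpha|}2-\frac{n}{2r}},
\]
and since $\frac{n}{2r}=\frac n2\big(\frac1m-\frac12\big)$ this is exactly the claimed bound with $l=0$. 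For $l=1$ the identical computation with \eqref{eq:Phi1ellall} in place of \eqref{eq:Phiellall} produces an extra factor $\xii^2$ inside the integral and the prefactor $1/(b(s)b(t))$, yielding
\[
\big\| |\xi|^{|\alpha|}\widehat{\Phi}'(t,s,\cdot)\big\|_{L^r_{\{|\xi|\le\Theta\}}}
\lesssim \frac1{b(s)b(t)}\,B(t,s)^{-\frac{|\alpha|}2-1-\frac{n}{2r}}
= \frac1{b(s)}\,(B(t,s)b(t))^{-1}\,B(t,s)^{-\frac{|\alpha|}2-\frac n2(\frac1m-\frac12)},
\]
which is precisely the case $l=1$ after multiplying by $\|g(s,\cdot)\|_{L^m}$.

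There is no serious obstacle here; the only points requiring a little care are the endpoint values of $m$. For $m=1$ one has $r=2$, Hausdorff--Young degenerates to the trivial bound $\|\widehat{g}\|_{L^\infty}\le\|g\|_{L^1}$, and Plancherel is not needed; for $m=2$ one has $r=\infty$, so the Gaussian $L^r$ integral above is replaced by $\sup_\xi\xii^{|\alpha|}e^{-C'\xii^2B(t,s)}$, whose maximum, attained at $\xii^2=|\alpha|/(2C'B(t,s))$, is again comparable to $B(t,s)^{-|\alpha|/2}$ (and equals a constant multiple of $1/b(s)$ when $|\alpha|=l=0$), so the identity $n/(2r)=\tfrac n2(\tfrac1m-\tfrac12)$ remains valid across the whole range. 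Everything else is the routine Gaussian scaling computation used already in Matsumura's original argument.
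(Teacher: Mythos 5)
Your argument is correct and is essentially the paper's own proof: the same H\"older splitting with the conjugate exponent determined by $1/r=1/m-1/2$, Hausdorff--Young for $\|\widehat{g}\|_{L^{m'}}\lesssim\|g\|_{L^m}$, insertion of the small-frequency multiplier bounds \eqref{eq:Phiellall}--\eqref{eq:Phi1ellall}, and the Gaussian scaling $\xi\mapsto\xi/\sqrt{B(t,s)}$ to produce the factor $(B(t,s))^{-|\alpha|/2-l-n/(2r)}$. Your explicit treatment of the endpoints $m=1$ and $m=2$ is a small addition the paper leaves implicit, but it changes nothing of substance.
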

\begin{proof}
Let~$m'$ and~$p$ be defined by~$1/m+1/m'=1$ and~$1/p+1/m'=1/2$, that is, $1/p=1/m-1/2$. We can estimate
\[ \| |\xi|^{|\alpha|}\partial_t^l\widehat{\Phi}(t,s,\cdot)\widehat{g}(s,\cdot) \|_{L^2_{\{|\xi|\leq\Theta\}}}
\leq \| |\xi|^{|\alpha|}
\partial_t^l\widehat{\Phi}(t,s,\cdot)\|_{L^p_{\{|\xi|\leq\Theta\}}} \|
\widehat{g}(s,\cdot) \|_{L^{m'}_{\{|\xi|\leq\Theta\}}}. \]
We can control~$\| \widehat{g}(s,\xi) \|_{L^{m'}}$
by~$\|g(s,\cdot)\|_{L^m}$. So we have to control the~$L^p$ norm of
the multiplier. Thanks to \eqref{eq:Phiellall},
\eqref{eq:Phi1ellall} we can estimate
\[ \| |\xi|^{|\alpha|} \partial_t^l\widehat{\Phi}(t,s,\cdot)\|_{L^p_{\{|\xi|\leq\Theta\}}}
\lesssim \frac1{b(s)(b(t))^l} \left(\int_{\{\xii\leq\Theta\}}
\xii^{p(|\alpha|+2l)} \exp\big(-Cp\xii^2 B(t,s)\big) \,d\xi
\right)^{\frac1p}. \]
Let~$\rho=Cp|\xi|^2B(t,s)$. After a change of variables to spherical
harmonics (the term~$\rho^{n-1}$ appears) we conclude
\[\int_{\{\xii\leq\Theta\}} \xii^{p(|\alpha|+2l)} \exp\big(-Cp\xii^2 B(t,s)\big) \,d\xi \lesssim (B(t,s))^{-(p(|\alpha|+2l)+n)/2}
\int_0^\infty \rho^{p(|\alpha|+2l)+n-1} e^{-\rho} \,d\rho.\]
We remark that the case~$\Theta(t,s)\to\infty$ brings no additional difficulties. The integral is bounded and we get a decay given by
\begin{equation}\label{eq:Matsudecay}
\frac1{b(s)(b(t))^l} (B(t,s))^{-|\alpha|/2-l-n/(2p)} = \frac1{b(s)}
(B(t,s)b(t))^{-l}
(B(t,s))^{-\frac{|\alpha|}2-\frac{n}2\left(\frac1m-\frac12\right)}.
\end{equation}
The proof is finished. \end{proof}
One can easily check that the decay function given in~\eqref{eq:Matsusmall} is worst than the one in~\eqref{eq:Matsularge}. Therefore, gluing together~\eqref{eq:Matsularge} and~\eqref{eq:Matsusmall}, we derive~\eqref{eq:Matsugeneral}. This concludes the proof of Theorem~\ref{Thm:linmain}.

%

\section{Generalizations and improvements}\label{sec:Gen}

\subsection{Admissible damping terms}
We may include oscillations in the damping term~$b(t)u_t$ if we replace Hypotheses~\ref{Hyp:b} and~\ref{Hyp:furtherb} by the following.
\begin{Hyp}\label{Hyp:bbs}
We assume that ~$b=b(t)$ satisfies the  conditions \eqref{en:bpos}-\eqref{en:bsnotdiss}-\eqref{en:breg}-\eqref{en:1b} in Hypothesis~\ref{Hyp:b}. Moreover,
we assume the existence of an \emph{admissible shape function} ~$\bs: [0,\infty)\to [0,\infty)$ such that
\[ \abs{\frac{b(t)}{\bs(t)}-2} \lesssim \frac1{1+t},\]
and  $\bs\in\mathcal{C}^1$, $\bs(t)>0$, monotone, and $t\bs(t)\to\infty$ as~$t\to\infty$. Finally, it satisfies ~\eqref{eq:tbprimeb}, that is,
$t\bs'(t)\leq m\bs(t)$ for some~$m\in [0,1)$.
\end{Hyp}
Then the statements of Theorem~\ref{Thm:linmain} and Theorems~\ref{Thm:main} and~\ref{Thm:low} are still valid.
\begin{Rem}
Let us assume that we have a life-span estimate for the local solution to~\eqref{eq:diss}, which guarantees that~$T_m(\epsilon)\to\infty$ as~$\epsilon\to0$, where~$T_m=T_m(\epsilon) \in (0,\infty]$ is the maximal existence time (see Lemma~\ref{Lem:localweight}). Then condition~\eqref{eq:tbprimeb} in Hypothesis~\ref{Hyp:furtherb} can be weakened to
\begin{equation}
\label{eq:limsupcrit} l\doteq\limsup_{t\to\infty} \frac{t\bs'(t)}{\bs(t)} <1,
\end{equation}
that is, it holds
\begin{equation}\label{eq:tbprimebt0}
t\bs'(t)/\bs(t)\leq m <1, \quad t\geq t_0
\end{equation}
for some~$t_0\geq0$, where we take~$m\in (l,1)$. Indeed, there exists~$\epsilon_1(t_0)>0$ such that~$T_m(\epsilon)\geq
2t_0$ for any~$\epsilon\in(0,\epsilon_1(t_0)]$, and this allow us to rewrite the proof of Theorems~\ref{Thm:main} and~\ref{Thm:low} starting from $t_0$.
\end{Rem}


\subsection{Semi-linear damped wave equation with small data in~$L^m\cap H^1$}
An intermediate case between the~$L^2$ framework in~\cite{NO} and the~$L^1$ context in~\cite{IMN} has been studied in~\cite{IO}.
For initial data in~$\mathcal{A}_{m,1}$, the authors find the critical exponent~$p(n,m)=1+(2m)/n$ for~$n\leq 6$,
for any~$m\in (1,2)$ if~$n=1,2$ and for suitable~$m\in[\overline{m},\overline{\overline{m}})$ if~$3\leq n\leq 6$.
\\
If we consider data $(u_0,u_1)\in \mathcal{A}_{m,1}$, for some~$m\in(1,2)$, then we can follow~\cite{IO} to extend Theorem~\ref{Thm:low}.
The range of admissible exponents for the nonlinear term will also depend on the choice of~$m\in(1,2)$.

%

\appendix

\section{Gagliardo - Nirenberg inequality}\label{sec:GN}

Here we state some Gagliardo-Nirenberg type inequalities which come into play in the proofs of Theorems~\ref{Thm:main} and~\ref{Thm:low}.
\begin{Lem}[Gagliardo-Nirenberg inequality, see Theorem 9.3 in~\cite{Fried}, Part 1]\label{Lem:GN}
Let~$j,m\in\N$ with~$j<m$, and let~$u\in\mathcal{C}^{m}_c(\Rn)$,
i.e.~$u\in\mathcal{C}^m$ with compact support. Let~$a\in [j/m,1]$, and
let~$p,q,r$ in~$[1,\infty]$ be such that
\[ j-\frac{n}q = \left(m-\frac{n}r\right)a-\frac{n}p(1-a). \]
Then
\begin{equation}\label{eq:GN}
\|D^ju\|_{L^q} \leq C_{n,m,j,p,r,a} \|D^mu\|_{L^r}^a\ \|u\|_{L^p}^{1-a}
\end{equation}
provided that
\begin{equation}\label{eq:GNcond}
\left(m-\frac{n}r\right)-j \not\in\N,
\end{equation}
i.e. $n/r>m-j$ or $n/r\not\in\N$. If~\eqref{eq:GNcond} is not satisfied, then~\eqref{eq:GN} holds provided that~$a\in [j/m,1)$.
\end{Lem}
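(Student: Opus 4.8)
The plan is to deduce the general inequality \eqref{eq:GN} from two classical ingredients—the first-order Gagliardo--Nirenberg--Sobolev inequality and an interpolation inequality for intermediate derivatives in a fixed Lebesgue exponent—and then to match exponents; the borderline situation excluded by \eqref{eq:GNcond} has to be treated apart. A preliminary remark: the linear relation $j-n/q=(m-n/r)a-(n/p)(1-a)$ is forced by scaling, since for $u_\homog(x)\doteq u(\homog x)$ one has $\|D^ju_\homog\|_{L^q}=\homog^{j-n/q}\|D^ju\|_{L^q}$, $\|D^mu_\homog\|_{L^r}=\homog^{m-n/r}\|D^mu\|_{L^r}$ and $\|u_\homog\|_{L^p}=\homog^{-n/p}\|u\|_{L^p}$, so \eqref{eq:GN} can only hold for all dilates of a given $u$ when the two powers of $\homog$ agree; in particular there is essentially one free parameter, namely $a$.

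First I would treat the base case $m=1$, $j=0$. One starts from $\|u\|_{L^{n/(n-1)}}\le\prod_{i=1}^n\|\partial_iu\|_{L^1}^{1/n}\lesssim\|\nabla u\|_{L^1}$, obtained by integrating in each coordinate direction via the fundamental theorem of calculus (this is where the compact support of $u$ enters) and applying the iterated Hölder inequality, followed by the arithmetic-geometric mean inequality. Applying this to $|u|^{s}$ with $s=r(n-1)/(n-r)$ and using Hölder gives the Sobolev inequality $\|u\|_{L^{r^*}}\le C\|\nabla u\|_{L^r}$, $r^*=nr/(n-r)$, for $1\le r<n$. An elementary Hölder interpolation $\|u\|_{L^q}\le\|u\|_{L^{r^*}}^{a}\|u\|_{L^p}^{1-a}$, valid exactly when $1/q$ is the corresponding convex combination of $1/r^*$ and $1/p$, then yields $\|u\|_{L^q}\le C\|\nabla u\|_{L^r}^{a}\|u\|_{L^p}^{1-a}$; the cases $r\ge n$ and, when permitted, the endpoints $p,q,r\in\{1,\infty\}$ are handled directly from the one-dimensional estimate.

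Next I would establish, for $0<j<m$ and any $\sigma\in[1,\infty]$, the interpolation inequality for intermediate derivatives $\|D^ju\|_{L^\sigma}\le C\,\|D^mu\|_{L^\sigma}^{j/m}\,\|u\|_{L^\sigma}^{1-j/m}$ (which is scale-consistent on its own). This is proved by induction on $m-j$ from the second-order case $\|\nabla u\|_{L^\sigma}^{2}\le C\,\|u\|_{L^\sigma}\,\|D^2u\|_{L^\sigma}$, itself a consequence of integration by parts and Hölder. Feeding the outcomes of the two steps into each other—applying the $m=1$, $j=0$ inequality to $D^ju$ and to $D^mu$, and using the scaling identity to pin down the admissible $a$—produces \eqref{eq:GN} on the whole non-borderline range; the point of hypothesis \eqref{eq:GNcond} is precisely to guarantee that every Sobolev embedding invoked along the way lands in an honest $L^{r^*}$ with $r^*<\infty$, i.e. strictly below $L^\infty$, so that $a=1$ is permitted.

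Finally, the borderline cases, i.e. when $(m-n/r)-j\in\N$: here the relevant Sobolev step embeds into $\mathrm{BMO}$ or an exponential Orlicz space rather than into $L^\infty$, so $a=1$ genuinely fails and one must restrict to $a\in[j/m,1)$; for such $a$ one recovers \eqref{eq:GN} by interpolating with a slightly sub-borderline exponent, which is harmless because $a$ is bounded away from $1$. I expect this borderline analysis, together with the careful bookkeeping of exactly which $a\in[j/m,1]$ are admissible and of the endpoint exponents, to be the main obstacle; everything else is the standard Sobolev-plus-interpolation machinery. Since the statement is exactly Theorem~9.3 of \cite{Fried}, one may alternatively simply cite that reference.
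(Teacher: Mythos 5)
The paper offers no proof of this lemma at all: it is stated as a classical result and attributed to Theorem~9.3 of Friedman's book, exactly as your last sentence anticipates. Your proposal therefore takes a genuinely different route, namely reconstructing the standard Sobolev-plus-interpolation proof. As an outline it is essentially correct and follows the classical Nirenberg scheme: scaling forces the exponent relation; the case $j=0$, $m=1$ follows from the $L^1$--$L^{n/(n-1)}$ inequality applied to $|u|^{s}$ plus H\"older interpolation; the intermediate-derivative inequality $\|D^ju\|_{L^\sigma}\le C\|D^mu\|_{L^\sigma}^{j/m}\|u\|_{L^\sigma}^{1-j/m}$ is obtained by induction from the second-order case; and the exceptional hypothesis \eqref{eq:GNcond} is correctly identified with the failure of the borderline embedding into $L^\infty$, forcing $a<1$ there. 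Two caveats are worth recording. First, the second-order inequality in $L^\sigma$ follows from integration by parts and H\"older only for $1<\sigma<\infty$; the endpoints $\sigma=1,\infty$ require the one-dimensional Landau-type argument on intervals. Second, the step ``feeding the outcomes of the two steps into each other'' hides the genuinely laborious part of Nirenberg's proof (the case analysis over the sign of $1/q$, infinite exponents, and non-integer Sobolev chains), which you acknowledge but do not carry out. For the purposes of this paper the citation is the appropriate proof, and in fact only the special cases recorded in Remark~\ref{Rem:GN0122} ($j=0$, $m=1$, $p=r=2$) are ever used, for which your base-case argument alone already suffices.
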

\begin{Rem}\label{Rem:GN0122}
If~$j=0$, $m=1$ and~$r=p=2$, then~\eqref{eq:GN} reduces to
\begin{equation}\label{eq:GN0122}
\|u\|_{L^q} \lesssim \|\nabla u\|_{L^2}^{\theta(q)}\ \|u\|_{L^2}^{1-\theta(q)},
\end{equation}
where~$\theta(q)$ is given from
\begin{equation}\label{eq:thetaqapp}
-\frac{n}q = \left(1-\frac{n}2\right)\theta(q)-\frac{n}2(1-\theta(q)) = \theta(q) -\frac{n}2,
\end{equation}
that is, $\theta(q)$ is as in~\eqref{eq:thetaq}. It is clear that~$\theta(q)\geq0$ if and only if $q\geq2$. Analogously ~$\theta(q)\leq1$ if and only if
\begin{equation}\label{eq:qgn}
\text{either $n=1,2$\,\,\, or} \quad q\leq 2^*\doteq \frac{2n}{n-2}.
\end{equation}
Applying a density argument the inequality \eqref{eq:GN0122} holds for any $u\in H^1$. Assuming $q<\infty$ the condition \eqref{eq:GNcond} can be neglected
also for $n=2$. Summarizing the estimate~\eqref{eq:GN0122} holds for any finite $q\geq2$ if~$n=1,2$ and for any $q\in [2,2^*]$ if~$n\geq3$.
\end{Rem}
In weighted spaces ~$H^1_{\psi(t,\cdot)}$ we can derive the following statements:
\begin{Lem}\label{Lem:GNweight}
Let $q\ge 2$ be such that \eqref{eq:qgn} holds, and let~$\theta(q)$ be as in~\eqref{eq:thetaqapp}. We have the following properties for any $\sigma \in
[0,1]$ and $t\ge 0$:
\begin{enumerate}[(i)]
\item\label{it:1} Let $\psi \ge 0$. If $v\in H^1_\psi$,  then $v\in H^1_{\sigma \psi}$ and for $j=0,1$ one has
\[
\|e^{\sigma \psi(t,\cdot)}\nabla^j v(t,\cdot)\|_2\le \|\nabla^j v\|_2^{1-\sigma}\|e^{\psi(t,\cdot)}\nabla^j v(t,\cdot)\|_2^\sigma.
\]
\item\label{it:2} Let $\Delta \psi \ge 0$. If $v\in H^1_{\sigma \psi}$, then $e^{\sigma\psi(t,\cdot )}v\in H^1$ and
\[
\|\nabla (e^{\sigma\psi(t,\cdot )}v)\|_2\le \|e^{\sigma\psi(t,\cdot )}\nabla v\|_2.
\]
\item\label{it:3} Let $\Delta \psi \ge 0$. If $v\in H^1_\psi$,  then
\[
\|e^{\sigma \psi(t,\cdot)}v\|_{L^q}\lesssim \|e^{\sigma \psi(t,\cdot)}v\|_{L^2}^{1-\theta(q)}\|e^{\sigma \psi(t,\cdot)}\nabla v\|_{L^2}^{\theta(q)}.
\]
\item\label{it:4} Let $\psi \ge 0$ such that $\inf_{x \in \R^n} \Delta\psi(t,x)=:C(t)>0$. Then
\[
\|e^{\sigma \psi(t,\cdot)}v\|_{L^q}\le (C(t))^{-\frac{1-\theta(q)}2}\|e^{\sigma\psi(t,\cdot )}\nabla v\|_2.
\]
\end{enumerate}
\end{Lem}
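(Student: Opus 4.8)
The plan is to establish the four items in the order given, feeding each one into the next; throughout, $t\ge0$ is fixed, $\psi$ abbreviates $\psi(t,\cdot)$, and every statement is read as an assertion about the weighted space $H^1_\psi$ of functions on $\R^n$. Item~\eqref{it:1} I would obtain by bare interpolation: write $e^{2\sigma\psi}|\nabla^jv|^2=(e^{2\psi}|\nabla^jv|^2)^{\sigma}(|\nabla^jv|^2)^{1-\sigma}$ and apply H\"older's inequality with exponents $1/\sigma$ and $1/(1-\sigma)$ (the endpoints $\sigma\in\{0,1\}$ being trivial); taking square roots gives $\|e^{\sigma\psi}\nabla^jv\|_2\le\|\nabla^jv\|_2^{1-\sigma}\|e^{\psi}\nabla^jv\|_2^{\sigma}$, and since $\psi\ge0$ forces $e^{\psi}\ge1$ both factors on the right are finite, so in particular $v\in H^1_{\sigma\psi}$.

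For item~\eqref{it:2} I would first take $v\in\mathcal{C}_c^\infty(\R^n)$ and integrate the divergence of the compactly supported field $v^2e^{2\sigma\psi}\nabla\psi$ over $\R^n$, which produces the identity $2\int e^{2\sigma\psi}v\,\nabla v\cdot\nabla\psi=-\int e^{2\sigma\psi}v^2\,\Delta\psi-2\sigma\int e^{2\sigma\psi}v^2|\nabla\psi|^2$. Substituting this into the expansion of $\int|\nabla(e^{\sigma\psi}v)|^2=\int e^{2\sigma\psi}|\nabla v+\sigma v\nabla\psi|^2$ gives the clean identity
\[
\int_{\R^n}|\nabla(e^{\sigma\psi}v)|^2\,dx=\int_{\R^n}e^{2\sigma\psi}|\nabla v|^2\,dx-\sigma\int_{\R^n}e^{2\sigma\psi}v^2\,\Delta\psi\,dx-\sigma^2\int_{\R^n}e^{2\sigma\psi}v^2|\nabla\psi|^2\,dx,
\]
and since $\sigma\ge0$ and $\Delta\psi\ge0$ the two subtracted terms are nonnegative, so $\|\nabla(e^{\sigma\psi}v)\|_2\le\|e^{\sigma\psi}\nabla v\|_2$. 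For general $v\in H^1_{\sigma\psi}$ I would approximate by $v_k\in\mathcal{C}_c^\infty$ with $v_k\to v$ in $H^1_{\sigma\psi}$: applying the inequality just proved to $v_k-v_j$ shows $e^{\sigma\psi}v_k$ is Cauchy in $H^1(\R^n)$, hence converges there to $e^{\sigma\psi}v$, and the bound survives the limit.

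Item~\eqref{it:3} is then a two-line matter: by~\eqref{it:1} one has $H^1_\psi\subset H^1_{\sigma\psi}$, so by~\eqref{it:2} the function $w:=e^{\sigma\psi}v$ lies in $H^1(\R^n)$; applying the classical Gagliardo-Nirenberg inequality~\eqref{eq:GN0122} to $w$ (legitimate for $q$ in the range~\eqref{eq:qgn}, with $\theta(q)$ as in~\eqref{eq:thetaqapp}) and then using $\|\nabla w\|_2\le\|e^{\sigma\psi}\nabla v\|_2$ from~\eqref{it:2} while keeping $\|w\|_2=\|e^{\sigma\psi}v\|_2$ yields the claim. For item~\eqref{it:4} I would combine~\eqref{it:3} with a weighted Poincar\'e inequality: from $\Delta\psi\ge C(t)$ and the same integration by parts, $C(t)\int e^{2\sigma\psi}v^2\le\int e^{2\sigma\psi}v^2\,\Delta\psi=-2\int e^{2\sigma\psi}v\,\nabla v\cdot\nabla\psi-2\sigma\int e^{2\sigma\psi}v^2|\nabla\psi|^2$, and a Young inequality tuned so that the $|\nabla\psi|^2$ terms cancel leaves $C(t)\|e^{\sigma\psi}v\|_2^2\lesssim\|e^{\sigma\psi}\nabla v\|_2^2$; inserting this into~\eqref{it:3} gives the asserted estimate. (The constant one absorbs is numerical and may depend on $\sigma$, which is harmless: in the application~\eqref{eq:GNweight} the constant $C_\sigma$ is allowed to depend on $\sigma$.)

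The one genuinely delicate point --- the step I would be most careful about --- is the legitimacy of the two integrations by parts in~\eqref{it:2} and~\eqref{it:4} for functions only in the weighted space, since a priori $\int e^{2\sigma\psi}v^2|\nabla\psi|^2$ need not be finite and one cannot differentiate directly on $H^1_{\sigma\psi}$. The fix is precisely the density argument described above: prove the identity and the inequality for $v\in\mathcal{C}_c^\infty$, where every term is manifestly integrable, and extend to $H^1_{\sigma\psi}$ using the monotonicity $\|\nabla(e^{\sigma\psi}v)\|_2\le\|e^{\sigma\psi}\nabla v\|_2$ to upgrade convergence; this needs only that $\psi(t,\cdot)$ be continuous, which holds since $\psi\in\mathcal{C}^1$. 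After that, everything reduces to sign inspection and interpolation.
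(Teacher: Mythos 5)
Your proof is correct and follows the same route as the paper's (which only sketches it in four lines): H\"older interpolation for (i), the integration-by-parts identity for (ii) with a density argument, the classical Gagliardo--Nirenberg inequality applied to $w=e^{\sigma\psi}v$ for (iii), and a weighted Poincar\'e step for (iv); your explicit identity in (ii) is exactly the content of the cited Lemma 2.3 of \cite{IT}. One small caveat: the Young-inequality cancellation in your proof of (iv) requires $\sigma>0$ (the resulting constant degenerates like $\sigma^{-(1-\theta(q))/2}$), but this is unavoidable --- for $\sigma=0$ and $q=2$ the stated inequality would reduce to a Poincar\'e inequality on all of $\R^n$, which is false --- and it is harmless here, since the lemma is only invoked with $\sigma>0$ and the $\sigma$-dependent constant is absorbed into $C_\sigma$ in \eqref{eq:GNweight}.
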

\begin{proof}
The statement~\eqref{it:1} is trivial for $\sigma=0$ and requires only H\"older's inequality for $\sigma \in (0,1]$. The property~\eqref{it:2} is obtained
by integration by parts, see Lemma 2.3 in \cite{IT}. For~\eqref{it:3} one combines~\eqref{it:2} with a Gagliardo-Nirenberg inequality (Lemma~\ref{Lem:GN}).
For~\eqref{it:4} one combines~\eqref{it:3} with integration by parts used in proving~\eqref{it:2}.
\end{proof}



\section*{Acknowledgments}

The first and the third author have been supported by a grant of DFG
(Deutsche Forschungsgemeinschaft) for the research project
\emph{Influence of time-dependent coefficients on semi-linear wave
models} (RE 961/17-1).

%


\begin{thebibliography}{abcd}
\bibitem[Fr]{Fried} A. Friedman, \emph{Partial Differential Equations}, Krieger-New York 1976.
\bibitem[Fu]{Fuj} H. Fujita, \emph{On the blowing up of solutions of the Cauchy Problem for $u_t=\triangle u+u^{1+\alpha}$}, J. Fac.Sci. Univ. Tokyo 13
(1966), 109--124.
\bibitem[IMN]{IMN} R. Ikehata, Y. Mayaoka, T. Nakatake,
\emph{Decay estimates of solutions for dissipative wave equations in $\R^N$ with lower power nonlinearities}, J. Math. Soc. Japan, 56 (2004), 
365--373.
\bibitem[IO]{IO} R. Ikehata, M. Ohta, \emph{Critical exponents for semilinear dissipative wave equations in $\R^N$}, J. Math. Anal. Appl. 269 (2002),
87--97.
\bibitem[IT]{IT} R. Ikehata, K. Tanizawa,
\emph{Global existence of solutions for semilinear damped wave equations in $R^N$ with noncompactly supported initial data}, Nonlinear Analysis 61 (2005), 1189--1208.
\bibitem[ITY]{ITY} R. Ikehata, G. Todorova, B. Yordanov, \emph{Critical exponent for semilinear wave equations with a subcritical potential},
Funkcial. Ekvac. 52 (2009), 411--435.
\bibitem[LNZ]{LNZ} J. Lin, K. Nishihara, J. Zhai, \emph{Critical exponent for the semilinear wave equation with
time-dependent damping}, preprint.
\bibitem[M]{Matsu} A. Matsumura, \emph{On the asymptotic behavior of solutions of semi-linear wave equations}, Publ. RIMS. 12 (1976), 169--189.
\bibitem[N10]{Nx}K. Nishihara, \emph{Decay properties for the damped wave equation with space dependent potential
and absorbed semilinear term}, Commun. Partial Differential Equations 35 (2010), 1402--1418.
\bibitem[N11]{N} K. Nishihara, \emph{Asymptotic behavior of solutions to the semilinear wave equation with time-dependent damping},
Tokyo J. of Math. 34 (2011), 327--343.
\bibitem[NO]{NO} M. Nakao, K. Ono, \emph{Existence of global solutions to the Cauchy problem for the semilinear dissipative wave equations},
Math. Z. 214 (1993), 325--342.
\bibitem[TY]{TY} G. Todorova, B. Yordanov, \emph{Critical Exponent for a Nonlinear Wave Equation with Damping},
Journal of Differential Equations 174 (2001), 464--489.
\bibitem[W05]{W05} J. Wirth, \emph{Asymptotic properties of solutions to wave equations with time-dependent dissipation}, PhD Thesis, TU Bergakademie Freiberg, 2004.
\bibitem[W07]{W07} J. Wirth, \emph{Wave equations with time-dependent dissipation II. Effective dissipation}, J. Differential Equations 232 (2007),
74--103.
\bibitem[Z01]{Z} Qi S. Zhang, \emph{A blow-up result for a nonlinear wave equation with damping: the critical case}, C. R. Acad. Sci. Paris S\'{e}r. I Math.
333 (2001), 109--114.
\end{thebibliography}
\end{document}